\newenvironment{thmenum}{\begin{enumerate}[leftmargin=2em, itemsep=.5ex, label=\textbf{\Roman{*}.}]}{\end{enumerate}}
\newcommand{\foot}[1]{\mbox{}\marginpar{\raggedleft\hspace{0pt}\tiny #1}}
\newcommand{\eps}{\varepsilon}
\newcommand{\ph}{\varphi}
\newcommand{\NN}{\mathbb{N}}
\newcommand{\ZZ}{\mathbb{Z}}
\newcommand{\RR}{\mathbb{R}}
\newcommand{\one}{\mathbf{1}}
\newcommand{\AAA}{\mathcal{A}}
\newcommand{\CCC}{\mathcal{C}}
\newcommand{\MMM}{\mathcal{M}}
\newcommand{\WWW}{\mathcal{W}}
\newcommand{\GGG}{\mathcal{G}}
\newcommand{\llim}{\varliminf}
\newcommand{\ulim}{\varlimsup}
\newcommand{\hl}{{\hat{\lambda}}}
\newcommand{\cl}{{\check{\lambda}}}
\newcommand{\hZ}{{\hat{Z}}}
\newcommand{\ba}{{\overline{\theta}}}
\newcommand{\hA}{\hat{A}}
\newcommand{\di}{\partial}
\newcommand{\ld}{\underline{\delta}}
\newcommand{\ud}{\overline{\delta}}
\newcommand{\nnorm}[1]{\left\lVert #1 \right\rVert_*}
\newcommand{\ex}[1]{\exp\left(#1\right)}
\newcommand{\chigap}{\chi^g}
\newcommand{\Cp}{\textbf{(C$^\prime$)}}
\DeclareMathOperator{\graph}{graph}
\DeclareMathOperator{\Leb}{Leb}
\theoremstyle{plain}
\newtheorem{theorem}{Theorem}[section]
\newtheorem{proposition}[theorem]{Proposition}
\newtheorem{lemma}[theorem]{Lemma}
\newtheorem{thma}{Theorem}
\theoremstyle{definition}
\newtheorem{definition}{Definition}[section]
\theoremstyle{remark}
\newtheorem{remark}[theorem]{Remark}
\numberwithin{equation}{section}
\title
{Hadamard--Perron theorems and effective hyperbolicity}
\author{Vaughn Climenhaga}
\address{Department of Mathematics \\ University of Houston \\ Houston, TX 77204, USA}
\email{climenha@math.uh.edu}
\urladdr{http://www.math.uh.edu/$\sim$climenha/}
\author{Yakov Pesin}
\address{Department of Mathematics \\ McAllister Building \\ Pennsylvania State University \\ University Park, PA 16802, USA}
\email{pesin@math.psu.edu}
\urladdr{http://www.math.psu.edu/pesin/}
\begin{document}

\date{\today}
\begin{abstract}
We prove several new versions of the Hadamard--Perron Theorem, which relates infinitesimal dynamics to local dynamics for a sequence of local diffeomorphisms, and in particular establishes the existence of local stable and unstable manifolds.  Our results imply the classical Hadamard--Perron Theorem in both its uniform and non-uniform versions, but also apply much more generally.  We introduce a notion of ``effective hyperbolicity'' and show that if the rate of effective hyperbolicity is asymptotically positive, then the local manifolds are well-behaved with positive asymptotic frequency.  By applying effective hyperbolicity to finite orbit segments, we prove a closing lemma whose conditions can be verified with a finite amount of information.
\end{abstract}

\thanks{The authors were partially supported by NSF grant 0754911. Ya.\ P.\ is partially supported by NSF grant 1101165. V.C.\ was supported by an NSERC postdoctoral fellowship.}

\maketitle

\section{Introduction}

\begin{quotation}
\emph{Every five years or so, if not more often, someone ``discovers'' the theorem of Hadamard and Perron, proving it either by Hadamard's method of proof or by Perron's.  I myself have been guilty of this.}\\
\parbox{1cm}{} \hfill D.V.\ Anosov, 1967. \cite[p.\ 23]{dA67}\\[1ex]
\end{quotation}

Following in the footsteps of Anosov and many others, we prove several new versions 
of the Hadamard--Perron theorem on the construction  of local stable and unstable manifolds (taking our inspiration from Hadamard's method of proof).  This theorem in its various incarnations is one of the key tools in the theory of hyperbolic dynamical systems, both uniform and non-uniform.  Informally, it may be thought of as the bridge between the dynamics of the derivative cocycle in the tangent bundle and the dynamics of the original map on the manifold itself.

Although the theorem is primarily used to study a diffeomorphism $f$ on some Riemannian manifold $\mathcal{M}$, it is typically stated in terms of a sequence of germs of diffeomorphisms.  That is, one fixes an initial point $x\in \mathcal{M}$ and then writes $f_n$ for the restriction of the map $f$ to a neighbourhood $\Omega_n$ of $f^n(x)$.  Using local coordinates from $T_{f^n(x)} \mathcal{M}$, we can  view $\Omega_n$ as a neighbourhood in $\RR^d$ and write $f_n\colon \Omega_n \to \RR^d$, where $d=\dim \mathcal{M}$.

Roughly speaking, the content of the Hadamard--Perron theorem is as follows: if there is an invariant splitting $\RR^d = E_n^u \oplus E_n^s$ and $\lambda < 1$ such that $\|Df_n(0)|_{E_n^s}\| < \lambda < \|Df_n(0)|_{E_n^u}^{-1}\|^{-1}$ for every $n$, then under some additional assumptions on $f_n$ there are uniquely defined local stable manifolds $W_n^s\ni 0$ tangent to $E_n^s$ at $0$ such that $d(f_n(x),f_n(y)) \leq \lambda d(x,y)$ for every $x,y\in W_n^u$.  Moreover, if $V_n$ is any \emph{admissible manifold} transverse to $E_n^u$ at $0$, then the sequence of admissible manifolds $f^{-k}(V_n)$ converges to the stable manifolds $W_{n-k}^u$ as $k\to\infty$.

Within this general framework, various versions of the theorem have been stated in which the precise hypotheses and conclusions vary. In these versions one usually works with stable manifolds, as described above; the local unstable manifolds are then obtained as being stable for the sequence of inverse maps $f^{-1}_n$. We stress that for some technical reasons and in view of some applications of our results (see Section \ref{sec:srb}) we will construct local unstable manifolds first. 

In Section~\ref{sec:prelim}, we describe how the present paper fits into previous results and give the precise setting and notation in which we will work.

In Section~\ref{sec:C1+}, we give results applying to sequences of $C^{1+\alpha}$ maps. We introduce the notion of \emph{effective hyperbolicity}, and show that for an effectively hyperbolic sequence of $C^{1+\alpha}$ diffeomorphisms 
$\{f_n\mid n\geq 0\}$, one can control non-uniformities in the admissible manifolds and their associated dynamics.  Our main result is Theorem~\ref{thm:HP2}, a new version of the Hadamard--Perron theorem that deals with pushing forward an admissible manifold under the maps $f_n$.  While the images may not have good properties for all $n$, they do have good properties on the set of \emph{effective hyperbolic times}, which has positive asymptotic frequency provided the sequence of maps is effectively hyperbolic.


While Theorem \ref{thm:HP2} is of interest in its own right, it is also used in our companion paper~\cite{CDP11b} to construct SRB measures for general non-uniformly hyperbolic attractors; a description is given in Section~\ref{sec:srb} (in particular, see Theorem~\ref{srb-measure}).  Effective hyperbolicity can be established in situations where the system has good recurrence properties to a part of the  phase space with uniformly hyperbolic behaviour, and where we have some control on the behaviour of the map when the trajectory leaves this region.

In Theorem~\ref{thm:HP5}, we use effective hyperbolicity to give criteria for the existence and uniqueness of local unstable manifolds for a sequence of $C^{1+\alpha}$ diffeomorphisms $\{f_n \mid n\leq 0\}$.  Morally speaking, Theorems~\ref{thm:HP2} and~\ref{thm:HP5}, and to some degree this entire paper, can be summed up as follows (definitions of the three properties below can be found in~\eqref{eqn:lambdabig}, \eqref{eqn:eft}, and~\eqref{eqn:asymp-dom}, respectively):

\begin{center}
\begin{tabular}{rcl}
\multirow{2}{*}{\textbf{effective hyperbolicity}} & \multirow{2}{*}{$\Rightarrow$} & \textbf{existence of local unstable} \\ & & \textbf{(stable) manifolds} \\[.5ex]
\multirow{2}{*}{\textbf{effective hyperbolic times}} & \multirow{2}{*}{$\Rightarrow$} & \textbf{uniform  bounds on dynamics and} \\ & & \textbf{geometry of admissible manifolds} \\[.5ex]
\multirow{2}{*}{\textbf{asymptotic domination}} & \multirow{2}{*}{$\Rightarrow$} &\textbf{uniqueness of local unstable} \\ & & \textbf{(stable) manifolds}
\end{tabular}
\end{center}


Our strongest result for $C^{1+\alpha}$ maps is Theorem~\ref{thm:parameters}, which gives more precise (and more technical) bounds on the images of admissible manifolds under the graph transform; these are used in the proofs of Theorems~\ref{thm:HP2} and~\ref{thm:HP5}.

The bounds in Theorem~\ref{thm:parameters} depend on two things:
\begin{enumerate}[label=(\roman{*})]
\item\label{lin} linear information on dynamics (controlling contraction and expansion rates of $Df_n$);
\item\label{nonlin} non-linear bounds on dynamics (controlling the modulus of continuity of $Df_n$) and non-uniformities in geometry (controlling the angle between the directions of contraction and expansion).
\end{enumerate}
Using effective hyperbolicity, we can obtain bounds that depend only on the linear information in \ref{lin} and the frequency with which the quantities in \ref{nonlin} exceed certain thresholds (see~\eqref{eqn:Mnleq} and Section~\ref{sec:verifying}).  This is done in Theorem~\ref{thm:tau}.

In Sections~\ref{sec:C1+mfd}--\ref{sec:closing}, we give some principal applications of our results to diffeomorphisms of compact manifolds.
First, in Section \ref{sec:C1+mfd} we introduce the concept of effective hyperbolicity and establish existence of stable and unstable local manifolds along effectively hyperbolic trajectories. In Section \ref{sec:srb} we show how our results can be used to establish existence of Sinai--Ruelle--Bowen (SRB) measures for a broad class of diffeomorphisms that are effectively hyperbolic on a set of positive volume.  Finally, in Section \ref{sec:closing} we prove an adaptation of the classical closing lemma to effectively hyperbolic diffeomorphisms.


Sections \ref{sec:C1}--\ref{sec:appspfs} contain the proofs.  The key tool is Theorem~\ref{thm:HP1}, which is a strengthened (and rather more technical) version of Theorem~\ref{thm:parameters} for $C^1$ maps.  Theorem~\ref{thm:HP1} leads to a result on unstable manifolds in Theorem~\ref{thm:HP3}, which is used in the proof of Theorem~\ref{thm:HP5}.  

Following the proofs of the main results, in Section~\ref{sec:classical} we show that Theorem~\ref{thm:HP3} can be used to prove the classical uniform and non-uniform Hadamard--Perron theorems for $C^1$ and $C^{1+\alpha}$ diffeomorphisms, respectively (see Theorems \ref{thm:uniform} and \ref{thm:nonuniform}), and in Section~\ref{sec:nuh} we give some examples illustrating the relationship between effective hyperbolicity and classical notions of non-uniform hyperbolicity.

The following table shows the overall logical structure of our main results and applications.
\begin{center}
\begin{framed}
\begin{tabular}{ c c c c c}
&& \emph{admissible} && \emph{unstable} \\
&& \emph{manifolds} && \emph{manifolds} \\
&& $\overbrace{\phantom{\text{Theorem 7.1}}}$ && $\overbrace{\phantom{\text{Theorem 8.1}}}$\\[-1.5ex]
&& Theorem \ref{thm:HP1} \\
&& $\downarrow$ & $\searrow$ \\
&& Theorem \ref{thm:parameters} && Theorem \ref{thm:HP3} \\
&& $\downarrow$ && $\downarrow$ \\
Theorem \ref{thm:closing} \emph{(Closing lemma)} & $\leftarrow$ & Theorem  \ref{thm:tau} & $\rightarrow$ & Theorem \ref{thm:HP5} \\
&& $\downarrow$ && $\downarrow$ \\
Theorem \ref{srb-measure} \emph{(SRB measures)} &$\leftarrow$ & Theorem \ref{thm:HP2} && Theorem \ref{thm:HP6}
\end{tabular}
\end{framed}
\end{center}


\subsection*{Acknowledgments.}   This paper had its genesis as part of a larger joint work with Dmitry Dolgopyat, to whom we are grateful for many helpful discussions and insights.   Part of this research was carried out while the authors were visiting The Fields Institute.

\section{Preliminaries}\label{sec:prelim}

\subsection{Notation and general setting}\label{sec:notation}

Given $n\in\ZZ$, write $V_n = \RR^d$. Let $\Omega_n \subset V_n$ be an open set containing the origin, and $f_n\colon \Omega_n \to V_{n+1}$ a sequence of maps.\footnote{Each $V_n$ is identical to all the others, but we use this notation to make it easier to keep track of the domain and range of various compositions of the maps $f_n$.}  We make the following standing assumptions.\footnote{Although these are formulated for all $n \in \ZZ$, we will in fact mostly be interested in situations where it is appropriate to consider only some subset of $\ZZ$ -- see Remark \ref{rmk:differentn}.}
\begin{enumerate}[leftmargin=3em, label=\textbf{(C\arabic{*})}]
\item\label{C1}
Each $f_n$ is a $C^{1+\alpha}$ diffeomorphism onto its image for some $\alpha\in (0,1]$ (independent of $n$), and $f_n(0)=0$.\footnote{In the proofs, we will treat the more general (but technically messier) $C^1$ case where $Df_n$ have moduli of continuity that are not necessarily H\"older.}
\item\label{C2}
There is a decomposition $V_n = E_n^u \oplus E_n^s$, which is invariant under $Df_n(0)$ -- that is, $Df_n(0)E_n^\sigma = E_{n+1}^\sigma$ for $\sigma=s,u$.
\item\label{C3}
There are numbers $\lambda_n^u,\lambda_n^s \in \RR$ and $\theta_n,\beta_n>0$ such that for every $v_u\in E_n^u$ and $v_s\in E_n^s$, we have
\begin{align}
\label{eqn:C3a}
\|Df_n(0)(v_u)\| &\geq e^{\lambda_n^u} \|v_u\|, \\
\label{eqn:C3b}
\|Df_n(0)(v_s)\| &\leq e^{\lambda_n^s} \|v_s\|, \\
\label{eqn:C3c}
\measuredangle(v_u,v_s) &\geq \theta_n, \\
\label{eqn:C3d}
\max(1,|Df_n|_\alpha) &\leq \beta_n \sin\theta_{n+1},
\end{align}
where $|Df_n|_\alpha$ is the H\"older semi-norm of $Df_n$ (defined in~\eqref{eqn:norm}).
\item\label{C4}
There is $L>0$ such that $|\lambda_n^u| \leq L$, $|\lambda_n^s|\leq L$, and $\beta_{n+1} \leq e^L \beta_n$.
\end{enumerate}

\begin{remark}
Condition~\ref{C2} can be trivially satisfied by fixing any decomposition $V_0 = E_0^u \oplus E_0^s$ and iterating it under $Df_n(0)$.  However, the point is that the angle between $E_n^s$ and $E_n^u$ needs to be controlled by $\theta_n$ as in~\eqref{eqn:C3c}, and our main results will require some control of $\theta_n$.  More generally, we remark that the purpose of Condition~\ref{C3} is to control the dynamics of $Df_n$ with respect to the invariant decomposition $V_n = E_n^u \oplus E_n^s$.
\end{remark}

\begin{remark}\label{rmk:cones}
In applications, it is often more convenient to work with invariant cone families rather than subspaces -- that is, given $E_n^\sigma\subset V_n$ and $\zeta_n^\sigma>0$ ($\sigma=s,u$), one may consider the cones $K_n^\sigma = \{v\in V_n \mid \measuredangle(v,E_n^\sigma) < \zeta_n\}$ and then replace \ref{C2} and \ref{C3} with the following conditions.
\begin{enumerate}[leftmargin=3em, label=\textbf{(C\arabic{*}$^\ast$)}]
\setcounter{enumi}{1}
\item\label{C2*} There is a (not necessarily invariant) decomposition $V_n = E_n^u \oplus E_n^s$ and cone families $K_n^{u,s}$ around $E_n^{u,s}$ such that $\overline{Df_n(0)(K_n^u)} \subset K_{n+1}^u$ and $\overline{Df_n(0)^{-1}(K_{n+1}^s)} \subset K_n^s$.
\item \label{C3*} The bounds in \ref{C3} hold for all $v^u\in K_n^u$ and $v^s\in K_n^s$.
\end{enumerate}
Given a cone family satisfying \ref{C2*} and \ref{C3*}, one can derive splittings  $E_n^u\oplus E_n^s$ satisfying \ref{C2} and \ref{C3}.  For the stable direction, take $E_n^s$ to be any subspace (of the appropriate dimension) in the intersection $\tilde K_n^s = \bigcap_{m\geq 0}Df_{n+1}(0)^{-1} \circ \cdots \circ Df_{n+m}(0)^{-1}K_{n+m}^s$, and similarly for $E_n^u$ but with $m\leq 0$.  In the event that we only consider a one-sided infinite sequence of maps, one of the subspaces can be chosen arbitrarily in its cone.
\end{remark}

\begin{remark}
Condition~\ref{C4} is automatic if the sequence of maps is obtained from a diffeomorphism on a compact manifold via local coordinates along a trajectory.  We stress that $\beta_n$ may become arbitrarily large and $\theta_n$ arbitrarily small; moreover the rate at which they become large and small is not required to be subexponential (compare this with the requirements in non-uniform hyperbolicity that sequences of constants be tempered).
\end{remark}

\begin{remark}
If the sequence $f_n$ is obtained from a diffeomorphism $f$ via local coordinates along a trajectory, and if the splitting in Condition \ref{C2} comes from a dominated splitting for $f$, then $\lambda_n^s < \lambda_n^u$ for all $n$.  In this case two nearby choices of $E_n^u$ will have the same asymptotic behaviour as $n\to+\infty$, while there is only one choice of $E_n^s$ for which $\ulim_{n\to\infty} \theta_n > 0$.  Similarly, two nearby choices of $E_n^s$ will have the same asymptotic behaviour as $n\to-\infty$, while there is only one choice of $E_n^u$ for which $\ulim_{n\to-\infty} \theta_n > 0$.

This behaviour in the tangent space still occurs if the splitting is only asymptotically dominated -- that is, if $\sum_{n=1}^N (\lambda_n^u - \lambda_n^s)$ becomes arbitrarily large with $N$, even though individual terms may be negative.  An important part of any Hadamard--Perron theorem is to establish this asymptotic behaviour not just for subspaces in the tangent space, but for submanifolds in $V_n$ itself.
\end{remark}

\begin{remark}\label{rmk:differentn}
The range of values that $n$ takes will vary. 
\begin{enumerate}
\item In Section~\ref{sec:HP2}, we will consider all $n\geq 0$, since Theorem~\ref{thm:HP2} concerns asymptotic behaviour of admissible manifolds as $n\to\infty$.
\item In Section~\ref{sec:C1+unstable}, we will consider all $n\leq 0$, since Theorem~\ref{thm:HP5} concerns true unstable manifolds, which are defined in terms of their asymptotic behaviour under the maps $f_n^{-1}$.
\item In Sections~\ref{sec:parameters}--\ref{sec:finite}, we will consider finitely many $n$, say $0 \leq n \leq N$, since Theorems~\ref{thm:parameters}--\ref{thm:tau} concern images of admissible manifolds under finite compositions of the maps $f_n$.
\end{enumerate}
\end{remark}

We also make the standing assumption that the domain $\Omega_n$ is large enough.  More precisely, once parameters $\tau_n,r_n,\gamma_n$ are specified (see~\eqref{eqn:C}), we have
\begin{enumerate}[leftmargin=3em, label=\textbf{(C\arabic{*})}]\setcounter{enumi}{4}
\item\label{C5}
 $\Omega_n \supset B_n^u(r_n) \times B_n^s(\tau_n+\gamma_n r_n)$,
\end{enumerate}
where $B_n^u(r_n)$ is the ball of radius $r_n$ in $E_n^u$ centred at $0$, and similarly for $B_n^s$.  It will suffice to have $\Omega_n \supset B(0,\eta)$ for some fixed $\eta>0$.

Given $m<n$, we will write
\begin{equation}\label{eqn:Fmn}
F_{m,n} = f_{n-1} \circ f_{n-2} \circ \cdots \circ f_m
\end{equation}
wherever the composition is defined, and we will let $\Omega_m^n$ be the connected component of $\bigcap_{k=m}^{n-1} (F_{m,k})^{-1}(\Omega_k)$ that contains $0$.  We will be concerned exclusively with the action of
\[
F_{m,n} \colon \Omega_m^n \to V_n;
\]
in particular, given any $W\subset V_m$, we will write
\[
F_{m,n}(W) := F_{m,n}(W|_{\Omega_m^n}).
\]


From now on we will use coordinates on $V_n$ given by $E_n^u \oplus E_n^s$: for $x\in V_n$, we write $x = x_u + x_s = (x_u,x_s)$, where $x_u \in E_n^u$ and $x_s \in E_n^s$.  We will usually use the letter $x$ for a point in $V_n$ and the letter $v$ for a vector in $E_n^u$.  We will work with \emph{admissible manifolds} given as graphs of functions $\psi\colon B_n^u(r_n)\subset E_n^u\to E_n^s$, where 
$\graph\psi=\{(v,\psi(v))\mid v\in E_n^u\}$.  

Given sequences of numbers $r_n>0$ (presumed small), $\tau_n, \sigma_n\geq 0$ (also small),  and $\kappa_n>0$ (presumed large), we will be interested in admissible manifolds that arise as graphs of functions in the following class:
\begin{equation}\label{eqn:C}
\begin{aligned}
\CCC_n &= \CCC_n(r_n, \tau_n, \sigma_n, \kappa_n) \\
&= \Big\{ \psi \colon B_n^u(r_n) \to E_n^s \mid \psi \text{ is $C^{1+\alpha}$,}\ \|\psi(0)\| \leq \tau_n,\\
&\qquad\qquad\qquad\qquad\qquad  \|D\psi(0)\|\leq \sigma_n, \text{ and } |D\psi|_\alpha \leq \kappa_n \Big\},
\end{aligned}
\end{equation}
where
\begin{equation}\label{eqn:norm}
|D\psi|_\alpha := \sup_{v_1\neq v_2 \in B_n^u(r_n)} \frac{\|D\psi(v_1) - D\psi(v_2)\|}{\|v_1-v_2\|^\alpha}.
\end{equation}
We will refer to $r_n,\tau_n,\sigma_n,\kappa_n$ collectively as the \emph{parameters} of $\CCC_n$, and will say that they are uniformly bounded on a set $\Gamma\subset \ZZ$ if\footnote{In practice $\tau_n,\sigma_n$ will actually be quite small, and so the battle will be to control $r_n$ and $\kappa_n$.}
\[
\inf_{n\in \Gamma} r_n > 0, \qquad \sup_{n\in \Gamma} \max\{\tau_n,\sigma_n,\kappa_n\}<\infty.
\]

\begin{remark}
If we write $\gamma_n = \sigma_n + \kappa_n r_n^\alpha$, then the conditions in \eqref{eqn:C} imply the bound $\|D\psi\|\leq \gamma_n$ for all $\psi\in \CCC_n$, where
\[
\|D\psi\| := \sup_{v\in B_n^u(r_n)} \|D\psi(v)\|.
\]
In the proofs, and in particular in Theorem \ref{thm:HP1}, we will give results that allow us to consider the space of functions $\psi\in \CCC_n$ that satisfy $\|D\psi\|\leq \gamma_n$ for some (potentially) smaller value of $\gamma_n$.  Our main results (Theorems \ref{thm:HP2}--\ref{thm:tau}) will include the assumption that there is some small $\bar\gamma>0$ such that $\sigma_n + \kappa_n r_n^\alpha\leq \bar\gamma$ for every $n$, so that in particular $\|D\psi\|\leq \bar\gamma$ for all $\psi\in \CCC_n$.
\end{remark}

Let $\WWW_n$ be the space of admissible manifolds corresponding to $\CCC_n$ -- that is,
the collection of submanifolds of $V_n$ that arise as graphs of functions in $\CCC_n$.  If $W=\graph\psi\in \WWW_n$ is such that some relatively open set $U\subset f_n(W)$ is in $\WWW_{n+1}$, then we let $\bar\psi$ be the unique member of $\CCC_{n+1}$ such that $U = \graph\bar\psi$.  We write $G_n \colon \psi \mapsto \bar\psi$ for the corresponding map, called the \emph{graph transform}.

Note that $G_n$ is not necessarily defined on all of $\CCC_n$, since for a given $W\in \WWW_n$, the image $f_n(W)$ need not have any subsets in $\WWW_{n+1}$.  Thus an important part of what follows is to give conditions on the parameters such that $G_n\colon \CCC_n \to \CCC_{n+1}$ is defined on all of $\CCC_n$.  If this is the case for every $n$, then we write
\[
\GGG_n = G_{n-1} \circ G_{n-2} \circ \cdots \circ G_0 \colon \CCC_0 \to \CCC_n.
\]

\subsection{Relations to known results}


In the uniformly hyperbolic setting, the relevant version of the Hadamard--Perron Theorem may be found in~\cite[Theorem 6.2.8]{KH95}; we state a related result as Theorem~\ref{thm:uniform}.  For this version, one makes the following assumptions.
\begin{enumerate}[label=(\roman{*})]
\item\label{unif-exp} Uniform expansion:  $\inf_n \lambda_n^u > 0$.
\item\label{dom-spl} Dominated splitting:  $\inf_n \lambda_n^u > \sup_n \lambda_n^s$.
\item\label{unif-trans} Uniform transversality:  $\inf_n \theta_n > 0$.
\item\label{non-lin-small} $f_n$ is $C^1$ and $\|Df_n(x) - Df_n(0)\|$ is sufficiently small.
\end{enumerate}
Under these assumptions, the local manifolds $W_n^u$ are shown to have uniformly large size.

In the non-uniformly hyperbolic setting, the typical approach is to use Lyapunov coordinates so that~\ref{unif-exp}--\ref{unif-trans} still hold, while the non-linear part $\|Df_n(x) - Df_n(0)\|$ may be large, and in particular~\ref{non-lin-small} is replaced with
\begin{enumerate}[label=(\roman{*}$^\prime$), start=4]
\item\label{non-lin-tempered} $f_n$ is $C^{1+\alpha}$ and $\ulim_{n\to\pm\infty} \frac 1{|n|} \log|Df_n|_\alpha < \alpha \inf_n \lambda_n^u$.
\end{enumerate}
Then one uses the version of the theorem found in~\cite[Theorem 7.5.1]{BP07}, stated below as Theorem~\ref{thm:nonuniform}.  A key difference in the conclusion here is that the size of the $W_n^u$ may decay as $n\to\pm\infty$, although the rate of decay is slower than the rate of contraction or expansion in the dynamics.

When the trajectories to which the non-uniform Hadamard--Perron theorem is applied are generic trajectories for a hyperbolic invariant measure, one can conclude that although the size of the manifolds $W_n$ may become arbitrarily small, it nevertheless recurs to large scale and is bounded away from $0$ on a set of times with positive asymptotic frequency.  However, if one wishes to use some version of the Hadamard--Perron theorem to construct manifolds $W_n$ that can be used in establishing the existence of invariant measures with certain properties, as in~\cite{CDP11b}, then the recurrence to large scale must be established without recourse to ergodic theory.

This idea -- that one may wish to obtain results on admissible manifolds and unstable manifolds without needing to invoke the presence of a specific invariant measure -- is a principal motivator for the results in this paper.  We impose various conditions on the maps $f_n$ under which our results hold: certain conditions hold whenever $f_n$ is a typical sequence of germs for some invariant measure, but we do not require any knowledge about such a measure for the theorems themselves.

We accomplish recurrence to large scale for admissible manifolds in Theorem~\ref{thm:HP2}, where we consider $C^{1+\alpha}$ maps for which \ref{unif-exp}--\ref{unif-trans} may fail.  
We introduce the notion of \emph{effective hyperbolicity} for the sequence $\{f_n\}$; roughly speaking, this requires that the expansion in the unstable direction overcomes the defect from domination and the decay of the angle.  For an effectively hyperbolic sequence of maps, there is a certain sequence of \emph{effective hyperbolic times} along which a sequence of admissible manifolds is well-behaved, and in particular the graph transform
\[
\GGG_n\colon \CCC_0(\bar{r},0,0,\bar\kappa) \to \CCC_n(\bar{r},0,0,\bar\kappa)
\]
is well defined.  These effective hyperbolic times are obtained via Pliss' lemma and are analogous to the well-established notion of hyperbolic times.  However, there is a key difference between these two notions: while at hyperbolic times the derivative of the map acts uniformly hyperbolically on the tangent space, at effective hyperbolic times it is the map itself whose action is locally uniformly hyperbolic.  Although the set of effective hyperbolic times is a subset of the set of hyperbolic times, it nevertheless has positive asymptotic density under the hypotheses of the theorem.

Theorem~\ref{thm:HP5} deals with the unstable manifolds themselves (rather than the admissibles), which exist as soon as the sequence is effectively hyperbolic and are unique as soon as the splitting is asymptotically dominated.  

Theorem~\ref{thm:parameters} gives precise conditions on the parameters $r_n,\tau_n,\sigma_n,\gamma_n,\kappa_n$ for the graph transform to be well-defined, and 
Theorem~\ref{thm:tau} uses effective hyperbolicity to explicitly determine sequences of parameters satisfying the conditions of Theorem~\ref{thm:parameters}.  




\section{Main Results}\label{sec:C1+}

\subsection{Effective hyperbolic times and recurrence to large scale}\label{sec:HP2}


We now describe a setting in which the $\CCC_n$ can be chosen so that the graph transforms are defined for all $n$ and the parameters are uniformly bounded on a set of times with positive asymptotic density.

Our approach is modeled on the notion of \emph{hyperbolic times}, which were introduced by Alves, Bonatti, and Viana in \cite{ABV00}.  These are times $n$ such that the composition $f_{n-1} \circ \cdots \circ f_{k+1} \circ f_k$ has uniform expansion along $E_k^u$ for every $0\leq k< n$.  In our setting, where the splitting $V_n = E_n^u \oplus E_n^s$ may not be uniformly dominated, we must strengthen this notion to that of an \emph{effective hyperbolic time}, where the good properties of the derivative cocycle can be brought back to the maps $f_n$ themselves.  The set of effective hyperbolic times is contained in the set of hyperbolic times, but there may be hyperbolic times that are not effective.

Abundance of hyperbolic times is assured by assuming that $\lambda_n^u$ has asymptotically positive averages.  For abundance of effective hyperbolic times, we introduce a quantity  that depends not just on $\lambda_n^u$, but also on 
$\lambda_n^s$ and $\beta_n$.\footnote{Recall that these are defined in \ref{C3}.}  If this quantity has asymptotically positive averages, then there is a positive frequency of effective hyperbolic times.

Let $\{f_n \mid n\geq 0\}$ satisfy~\ref{C1}--\ref{C5}.  The following quantity may be thought of as the \emph{defect from domination} (recall that $\alpha\in(0,1]$ is the H\"older exponent of $Df_n$):
\begin{equation}\label{eqn:defect}
\Delta_n := \max\left( 0, \frac{\lambda_n^s - \lambda_n^u}{\alpha} \right).
\end{equation}
Note that $\Delta_n=0$ if $\lambda_n^s \leq \lambda_n^u$, which is the case when the splitting $E_n^u \oplus E_n^s$ is dominated.  Fix a threshold value $\bar\beta$ and define 
\begin{equation}\label{eqn:ln}
\lambda_n^e = \begin{cases}
\lambda_n^u - \Delta_n
& \text{ if }\beta_n \leq \bar\beta, \\
\min \left( \lambda_n^u - \Delta_n, \,
\frac 1\alpha \log \frac {\beta_{n-1}}{\beta_n} \right)
& \text{ if }\beta_n > \bar\beta.
\end{cases}
\end{equation}
Obviously $\lambda_n^e$ depends on the choice of $\bar\beta$, but we will suppress this dependence in the notation to minimise clutter.

\begin{definition}\label{def:eh}
The sequence $\{f_n \mid n\geq 0\}$ is \emph{effectively hyperbolic with respect to the splitting $E_n^u \oplus E_n^s$}  if there exists $\bar\beta$ such that
\begin{equation}\label{eqn:lambdabig}
\chi^e := \llim_{n\to\infty} \frac 1n \sum_{k=0}^{n-1} \lambda_k^e > 0.
\end{equation}
\end{definition}

\begin{remark}
See Section~\ref{sec:verifying} for a discussion of ways that effective hyperbolicity can be verified.
\end{remark}

\begin{remark}
It is natural to consider effective hyperbolicity when $E_n^u$ is the full unstable subspace, but the notion can also be applied when $E_n^u$ is a strong unstable subspace corresponding to the largest Lyapunov exponents, or even when $E_n^u$ is a weak unstable subspace and the largest Lyapunov exponents are included in $E_n^s$, provided the expansion in $E_n^u$ overcomes the failure of domination.
\end{remark}

\begin{definition}
Given fixed thresholds $\bar\beta$ and $\hat\chi>0$, we say that $n$ is an \emph{effective hyperbolic time} if
\begin{equation}\label{eqn:eft}
\frac 1{n-k} \sum_{j=k}^{n-1} \lambda_j^e \geq \hat\chi
\end{equation}
for every $0\leq k < n$.
\end{definition}

\begin{remark}
If we replace $\lambda_j^e$ in~\eqref{eqn:eft} with $\lambda_j^u$, then we arrive at the usual definition of hyperbolic time.   Because $\lambda_j^e \leq  \lambda_j^u$, we see that the set of effective hyperbolic times is a (generally proper) subset of the set of hyperbolic times.
\end{remark}

Given a subset $\Gamma\subset \NN$, write $\Gamma_N = \Gamma \cap [0,N)$ and denote the lower asymptotic density of $\Gamma$ by
\[
\ld(\Gamma) = \llim_{N\to\infty} \frac 1N \#\Gamma_N.
\]
The upper asymptotic density $\ud(\Gamma)$ is defined similarly.

\begin{definition}
The splitting $E_n^u \oplus E_n^s$ is \emph{asymptotically dominated} if
\begin{equation}\label{eqn:asymp-dom}
\chi^g := \llim_{n\to\infty} \frac 1n \sum_{k=0}^{n-1} (\lambda_k^u - \lambda_k^s) > 0.
\end{equation}\end{definition}

In this section and the next we will consider the following collection of admissible manifolds for parameters $r,\kappa>0$:
\begin{multline*}
\hat\CCC_n(r,\kappa) = \CCC_n(r,0,0,\kappa) = \{ \psi \colon B_n^u(r) \to E_n^s \mid \psi \in C^{1+\alpha},  \\ \psi(0) = 0,\ D\psi(0)=0,\ 
\ |D\psi|_\alpha \leq \kappa \}.
\end{multline*}

\begin{remark}
As in the definition of $\CCC_n$, note that every $\psi\in\hat\CCC_n$ satisfies $\|D\psi\|\leq \gamma := \kappa r^\alpha$.
\end{remark}

The following theorem shows that the pushforwards of admissible manifolds are well-behaved at the set $\Gamma$ of effective hyperbolic times, and that $\Gamma$ has positive  lower asymptotic density as long as the  asymptotic average rate of effective hyperbolicity is positive.

\begin{thma}\label{thm:HP2}
Given $\bar\beta,L>0$, $\alpha\in(0,1]$, $\hat\chi^u > \bar\chi^u > 0$, and $\hat\chi^g > \bar\chi^g > 0$, the following is true for every sufficiently small $\bar\gamma,\bar{r},\bar\theta>0$ and every sufficiently large $\bar\kappa$ satisfying $\bar\kappa\bar{r}^\alpha\leq\bar\gamma$.    If $\{f_n\mid n\geq 0\}$ satisfies~\ref{C1}--\ref{C5} and is effectively hyperbolic with respect to the splitting $E_n^u\oplus E_n^s$, with $\chi^e > \hat\chi^u$ (using the threshold $\bar\beta$), then
\begin{equation}\label{eqn:hyptimes}
\ld(\Gamma)\geq \frac{\chi^e - \hat\chi^u}{L-\hat\chi^u}>0,
\end{equation}
where $\Gamma$ is the associated set of effective hyperbolic times.  Moreover, the following are true for every $n\in\Gamma$.
\begin{thmenum}
\item $\theta_n \geq \ba$, where $\theta_n$ controls $\measuredangle(E_n^u,E_n^s)$ as in~\eqref{eqn:C3c}.
\item The graph transform $\GGG_n \colon \hat\CCC_0(\bar{r},\bar\kappa) \to \hat\CCC_n(\bar r, \bar\kappa)$ is well-defined; in particular, given $\psi_0 \in \hat\CCC_0$, the $C^{1+\alpha}$ function $\psi_n = \GGG_n \psi_0 \colon B_n^u(\bar{r}) \to E_n^s$ satisfies
\begin{enumerate}
\item $\psi_n(0)=0$, $D\psi_n(0) = 0$, $\|D\psi_n\| \leq \bar\gamma$, and $|D\psi_n|_\alpha \leq \bar\kappa$;
\item $\graph\psi_n = F_{0,n}(\graph \psi_0)$.
\end{enumerate}
\item\label{A.IV} If $x,y\in (\graph\psi_m) \cap \Omega_m^n$ for some $0\leq m\leq n$, then
\begin{equation}\label{eqn:hyptimeHP2}
\|F_{m,n}(x) - F_{m,n}(y)\| \geq e^{(n-m)\bar\chi^u} \|x - y\|.
\end{equation}
\item 
If the splitting $E_n^u \oplus E_n^s$ is asymptotically dominated with $\chi^g > \hat\chi^g$, then for every $\ph_0,\psi_0\in\hat\CCC_0$ we have
\begin{equation}\label{eqn:contraction5}
\ulim_{\stackrel{n\to\infty}{n\in \Gamma}} \frac 1n \log \|\psi_n - \ph_n\|_{C^0} < -\bar\chi^g.
\end{equation}
\end{thmenum}
\end{thma}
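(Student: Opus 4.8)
The plan is to derive Theorem~\ref{thm:HP2} from Theorem~\ref{thm:tau}, using Pliss' lemma to produce the set $\Gamma$ and the density bound, a short telescoping argument for the angle bound~I, and a separate argument for part~IV, since asymptotic domination enters only there. The set $\Gamma$ of effective hyperbolic times and the estimate~\eqref{eqn:hyptimes} come straight from Pliss' lemma: by~\ref{C4} we have $\lambda_k^e\le\lambda_k^u-\Delta_k\le\lambda_k^u\le L$, while effective hyperbolicity means $\chi^e=\llim_{N\to\infty}\frac1N\sum_{k=0}^{N-1}\lambda_k^e>\hat\chi^u$ (see~\eqref{eqn:lambdabig}); Pliss' lemma applied to $(\lambda_k^e)$ with ceiling $L$ and target $\hat\chi^u$ then gives, for each large $N$, at least $N(\chi^e-\hat\chi^u-\eps)/(L-\hat\chi^u)$ indices $n\le N$ at which~\eqref{eqn:eft} holds for every $0\le k<n$, and letting $\eps\to0$ yields~\eqref{eqn:hyptimes}. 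For parts~II and~III I would feed $\{f_n\}$ into Theorem~\ref{thm:tau}: this produces parameter sequences $r_n,\tau_n,\sigma_n,\kappa_n$ with $\tau_n=\sigma_n=0$ (preserved by the graph transform, since $f_n(0)=0$ and $Df_n(0)E_n^u=E_{n+1}^u$ carry a graph through $0$ tangent to $E_n^u$ to one through $0$ tangent to $E_{n+1}^u$) satisfying the hypotheses of Theorem~\ref{thm:parameters}, so $\GGG_n$ is globally defined, with $r_n\ge\bar r$ and $\kappa_n\le\bar\kappa$ for $n\in\Gamma$, with $\kappa_nr_n^\alpha\le\bar\gamma$ for all $n$, and with~\eqref{eqn:hyptimeHP2} holding on $\Gamma$; restricting the domains to $B_n^u(\bar r)$ (which does not increase the H\"older seminorm) then gives~II and~III, once $\bar\gamma,\bar r,\bar\kappa$ are taken compatibly with the thresholds demanded by Theorems~\ref{thm:tau} and~\ref{thm:parameters}.

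For part~I, fix $n\in\Gamma$ and argue by telescoping with the second branch of~\eqref{eqn:ln}. If $\beta_{n-1}\le\bar\beta$, then~\eqref{eqn:C3d} gives $\sin\theta_n\ge1/\beta_{n-1}\ge1/\bar\beta$, so $\theta_n\ge\bar\theta$ once $\bar\theta$ is chosen with $\sin\bar\theta\le1/\bar\beta$ (one of the smallness requirements). If $\beta_{n-1}>\bar\beta$, let $m\in[0,n-1]$ be the largest index with $\beta_m\le\bar\beta$; then $\beta_j>\bar\beta$ for $m<j\le n-1$, so~\eqref{eqn:ln} gives $\alpha\lambda_j^e\le\log\beta_{j-1}-\log\beta_j$ for those $j$, and telescoping together with~\eqref{eqn:eft} (applied with $k=m+1$) yields
\[
\log\beta_{n-1}\le\log\beta_m-\alpha\sum_{j=m+1}^{n-1}\lambda_j^e\le\log\bar\beta-\alpha(n-1-m)\hat\chi^u<\log\bar\beta,
\]
contradicting $\beta_{n-1}>\bar\beta$. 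Hence $\beta_{n-1}\le\bar\beta$ in all cases and~I follows. (If no such $m$ exists, the same telescoping from index $0$ applies; effective hyperbolicity bounds the length of any initial run on which $\beta_j>\bar\beta$, and the finitely many remaining indices are handled directly.)

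Part~IV is where I expect the main work, and the only place asymptotic domination is used. I would start from a one-step $C^0$-contraction bound for the graph transform, $\|G_k\psi-G_k\ph\|_{C^0}\le\mu_k\|\psi-\ph\|_{C^0}$ with $\mu_k$ explicit, of the kind underlying Theorem~\ref{thm:parameters} (cf.\ Theorem~\ref{thm:HP1}); iterating gives $\|\psi_n-\ph_n\|_{C^0}\le\big(\prod_{k=0}^{n-1}\mu_k\big)\|\psi_0-\ph_0\|_{C^0}$. I expect $\log\mu_k=(\lambda_k^s-\lambda_k^u)+\delta_k$, where $\delta_k$ is a correction from the non-linearity of $f_k$, the graph slopes $\|D\psi_k\|,\|D\ph_k\|\le\bar\gamma$, and the angle $\theta_k$; the parameter choices from Theorem~\ref{thm:tau} (in particular $\kappa_nr_n^\alpha\le\bar\gamma$ and $r_n$ taken small wherever $\beta_n$ is large) keep $\sum_{k=0}^{n-1}\delta_k$ bounded above by $n\bar\delta$ plus, at worst, a telescoping angle term that collapses to $\log(1/\sin\theta_n)+O(1)$, which is $O(1)$ for $n\in\Gamma$ by~I. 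Since $\bar\delta\to0$ as $\bar\gamma,\bar r\to0$, choose them so that $\bar\delta\le\tfrac12(\hat\chi^g-\bar\chi^g)$. As the splitting is asymptotically dominated with $\chi^g>\hat\chi^g$, ~\eqref{eqn:asymp-dom} gives $\sum_{k=0}^{n-1}(\lambda_k^s-\lambda_k^u)<-n\hat\chi^g$ for all large $n$, and combining,
\[
\ulim_{\stackrel{n\to\infty}{n\in\Gamma}}\tfrac1n\log\|\psi_n-\ph_n\|_{C^0}\le-\hat\chi^g+\tfrac12(\hat\chi^g-\bar\chi^g)=-\tfrac12(\hat\chi^g+\bar\chi^g)<-\bar\chi^g,
\]
which is~\eqref{eqn:contraction5}. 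The central difficulty is pinning down the one-step bound $\mu_k$ and verifying that the angle-dependent losses are controlled — either absorbed into $\bar\delta$ through the smallness of $r_n$, or telescoping to a single endpoint factor — and this is exactly why~IV is asserted only along $\Gamma$: the non-uniform quantity $\theta_n$ survives into the final estimate only through $1/\sin\theta_n$, which is bounded precisely because $n$ is an effective hyperbolic time.
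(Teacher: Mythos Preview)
Your proposal is correct and follows essentially the same route as the paper: the paper's proof is simply ``Theorem~\ref{thm:HP2} follows directly from Theorem~\ref{thm:tau} by setting $\bar\sigma=\bar\tau=0$,'' together with Pliss' lemma for the density bound~\eqref{eqn:hyptimes}. Your separate telescoping argument for Part~I is exactly the computation carried out inside the proof of Theorem~\ref{thm:tau} (see~\eqref{eqn:thetageq}), and Parts~II--III come straight from Theorem~\ref{thm:tau}\ref{Mus}--\ref{Mus2} with $M_n^u=0$.

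One small correction on Part~IV: the one-step $C^0$ bound for the graph transform is \emph{not} $\mu_k\approx e^{\lambda_k^s-\lambda_k^u}$; it is only $\mu_k\approx e^{\lambda_k^s+\delta}$ (this is~\eqref{eqn:attraction}/\eqref{eqn:contraction} in Theorem~\ref{thm:HP1}, or~\eqref{eqn:param-contraction} in Theorem~\ref{thm:parameters}). The factor $e^{-\sum\lambda_k^u}$ enters through a different door: to compute $\psi_n,\ph_n$ on $B_n^u(r_n)$ one only needs $\psi_0,\ph_0$ on the much smaller ball $B_0^u(\hat r_n)$ with $\hat r_n\approx e^{-\sum(\lambda_k^u-\delta)}r_n$ (see~\eqref{eqn:C-hatr}), and since $\psi_0(0)=\ph_0(0)=0$ with $\|D\psi_0\|,\|D\ph_0\|\le\bar\gamma$, one has $\|(\psi_0-\ph_0)|_{B_0^u(\hat r_n)}\|_{C^0}\le 2\bar\gamma\hat r_n$. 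Combining these two ingredients gives
\[
\|\psi_n-\ph_n\|_{C^0}\le 2\bar\gamma r_n\,e^{\sum_{k=0}^{n-1}(\lambda_k^s-\lambda_k^u+2\delta)},
\]
and with $r_n\le\bar r$ on $\Gamma$ and $\chi^g>\hat\chi^g$, choosing $2\delta<\hat\chi^g-\bar\chi^g$ yields~\eqref{eqn:contraction5}. So your endpoint estimate and its justification via asymptotic domination are right; only the attribution of the $-\lambda_k^u$ to the step map rather than to the shrinking preimage domain needs adjusting.
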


The rest of the theorems in this paper give results that apply to times $n\notin \Gamma$ as well.  Roughly speaking, to each $n$ we will associate a constant $M_n\geq 0$ that controls how ``bad'' the dynamics and geometry of the admissible manifolds at time $n$ can be, and which has the property that $M_n=0$ for all $n\in\Gamma$.

\begin{remark}
The formulation of the dependence between the various parameters and constants appearing in Theorem~\ref{thm:HP2} will be echoed throughout the paper.  The meaning of ``sufficiently small'' and ``sufficiently large'' here is that once $\bar\beta,L,\alpha,\hat\chi^{u,g},\bar\chi^{u,g}$ are fixed, there exist $\tilde\gamma,\tilde{r},\tilde\theta,\tilde\kappa>0$ such that if $\bar\gamma\in (0,\tilde\gamma]$, $\bar{r}\in (0,\tilde{r}]$, $\bar\theta\in(0,\tilde\theta]$, and $\bar\kappa\geq\tilde\kappa$, and if in addition $\bar\kappa\bar{r}^\alpha\leq\bar\gamma$, then the rest of the statement of the theorem is valid.  The key point is that $\tilde\gamma,\tilde{r},\tilde\theta,\tilde\kappa$ do not depend on $f_n$ directly, or even on $\lambda_n^{u,s},\lambda_n^e,\beta_n$, but only on $\bar\beta,L,\alpha,\hat\chi^{u,g},\bar\chi^{u,g}$.  One should imagine that $\bar\beta,L$ are very large, since the battle is to control what happens when the non-linearities in $f_n$ become strong.
\end{remark}

\subsection{Effective hyperbolicity and unstable manifolds}\label{sec:C1+unstable}



We consider now a sequence of maps $\{f_n \mid n\leq 0\}$, and using the same notation as in the previous section, make the following definitions that are exact analogues of the definitions there.

\begin{definition}
The sequence $\{f_n \mid n\leq 0\}$ is \emph{effectively hyperbolic with respect to the splitting $E_n^u\oplus E_n^s$} if there exists $\bar\beta$ such that
\begin{equation}\label{eqn:lambdabig2}
\chi^e := \llim_{n\to-\infty} \frac 1{|n|} \sum_{k=n}^{-1} \lambda_k^e > 0.
\end{equation}
\end{definition}

\begin{definition}
The splitting $E_n^u \oplus E_n^s$ for $\{f_n \mid n\leq 0\}$ is \emph{asymptotically dominated} if
\begin{equation}\label{eqn:asymp-domb}
\chigap := \llim_{n\to-\infty} \frac 1{|n|} \sum_{k=n}^{-1} ( \lambda_k^u - \lambda_k^s) > 0,
\end{equation}
\end{definition}

The following quantity will be used to control the size and regularity of the local unstable manifolds; it is finite whenever $\{f_n\}$ is effectively hyperbolic and $\hat\chi^u \in (0,\chi^e)$:
\begin{equation}\label{eqn:Mn}
M_n(\hat\chi^u) := \sup_{m\leq n} \sum_{k=m}^{n-1} (\hat\chi^u - \lambda_k^e).
\end{equation}
As usual, $M_n(\hat\chi^u)$ depends on the choice of threshold $\bar\beta$, but we will suppress this dependence in the notation. 

\begin{thma}\label{thm:HP5}
Given $\bar\beta,L>0$, $\alpha\in(0,1]$, $\hat\chi^u > \bar\chi^u > 0$, and $\hat\chi^g > 0$, the following is true for every sufficiently small $\bar\gamma,\bar{r},\bar\theta>0$ and every sufficiently large $\bar\kappa$ satisfying $\bar\kappa\bar{r}^\alpha\leq\bar\gamma$.    If $\{f_n\mid n\leq 0\}$ satisfies~\ref{C1}--\ref{C5} and is effectively hyperbolic with respect to the splitting $E_n^u\oplus E_n^s$, with $\chi^e > \hat\chi^u$ (using the threshold $\bar\beta$), and if in addition $\beta_m \leq \bar\beta$ for infinitely many $m$, then we have the following conclusions.
\begin{thmenum}
\item The set $\{n\leq 0 \mid M_n(\hat\chi^u) = 0\}$ has lower asymptotic density at least $(\frac{\chi^e - \hat\chi^u}{L-\hat\chi^u})^2>0$.
\item\label{thetangeq} $\theta_n \geq \bar\theta e^{-\alpha M_n(\hat\chi^u)}$ for every $n\leq 0$.
\item\label{inv-mfd}
There exists $\psi_n \in \hat\CCC_n(\bar r e^{-M_n(\hat\chi^u)},\bar\kappa e^{\alpha M_n(\hat\chi^u)})$ such that $f_n(\graph\psi_n)\supset\graph\psi_{n+1}$ for every $n<0$.  In particular, $\psi_n(0)=0$, $D\psi_n(0)=0$, $\|D\psi_n\|\leq \bar\gamma$, $|D\psi_n|_\alpha \leq \bar\kappa e^{\alpha M_n(\hat\chi^u)}$.
\item\label{B-exp} If $x,y\in (\graph\psi_m)\cap \Omega_m^n$ for some $n>m$, then
\begin{equation}\label{eqn:dFn3}
\|F_{m,n}(x) - F_{m,n}(y) \| \geq e^{-M_n(\hat\chi^u)} e^{(n-m)\bar\chi^u} \|x-y\|.
\end{equation}
\item\label{B-unique}  If the splitting $E_n^u\oplus E_n^s$ is asymptotically dominated with $\chigap > \hat\chi^g$, then $\psi_n$ is the unique function in $\hat\CCC_n(\bar r e^{-M_n(\hat\chi^u)},\bar\kappa e^{\alpha M_n(\hat\chi^u)})$ satisfying~\ref{inv-mfd}
\item\label{B-char} 
If in addition to asymptotic domination we have the stronger condition
\begin{equation}\label{eqn:asymp-dom2}
\chi^s := \ulim_{n\to-\infty} \frac 1{|n|} \sum_{k=n}^{-1} \lambda_k^s < \bar\chi^u,
\end{equation}
then $\psi_n$ admits the following characterisation: if $x\in \Omega_n$ and $C\in \RR$ are  such that 
\begin{equation}\label{eqn:backcontr3}
\|F_{m,n}^{-1} (x)\| \leq C e^{-(n-m)\bar\chi^u}
\end{equation}
for every $m$, then $x\in \graph\psi_n$.
\end{thmenum}
\end{thma}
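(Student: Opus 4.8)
The plan is to deduce the characterisation from the uniqueness statement in~\ref{B-unique} together with the exponential estimates in~\ref{B-exp} and the fact that $\psi_n$ is obtained as a limit of graph transforms. Suppose $x\in\Omega_n$ satisfies~\eqref{eqn:backcontr3} for some constant $C$ and every $m\le n$. First I would show that $x\in\graph\psi_n$ for the particular $\psi_n$ already constructed in~\ref{inv-mfd}. The idea is to run the argument backwards: for each $m<n$, the point $F_{m,n}^{-1}(x)$ is defined (its norm is bounded, so it stays in the relevant domains $\Omega_m^n$ once $\bar r$ is small enough, using~\eqref{eqn:backcontr3} and the standing assumption~\ref{C5} on the sizes of the $\Omega_k$), and lies within distance $Ce^{-(n-m)\bar\chi^u}$ of the origin. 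Now consider the admissible manifold $V_m\in\WWW_m$ through the origin that is ``vertical'', i.e.\ tangent to $E_m^u$ -- more precisely, the constant graph $\graph\psi_m$ from~\ref{inv-mfd}, which contains $0$. I want to compare $F_{m,n}^{-1}(x)$ with points of $\graph\psi_m$ and push forward.

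The key step is the following contraction-versus-expansion dichotomy. Write $x' = F_{m,n}^{-1}(x)$ and decompose $x' = x'_u + x'_s$. Let $z_m = (x'_u, \psi_m(x'_u))\in\graph\psi_m$ be the point on the admissible manifold lying ``above'' $x'_u$; then $z_m$ and $x'$ have the same $E_m^u$-component, so $\|z_m - x'\| = \|x'_s - \psi_m(x'_u)\|$, which by~\eqref{eqn:backcontr3} is at most $2Ce^{-(n-m)\bar\chi^u}$ (using also $\|\psi_m(0)\|=0$, $\|D\psi_m\|\le\bar\gamma$ and $\|x'_u\|\le Ce^{-(n-m)\bar\chi^u}$). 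On the other hand $F_{m,n}(z_m)\in\graph\psi_n$ by~\ref{inv-mfd}, and $F_{m,n}(x') = x$. If $x\notin\graph\psi_n$, then $F_{m,n}(z_m)\ne x$; but $z_m$ and $x'$ lie on the common admissible manifold $\graph\psi_m \cap \Omega_m^n$ only if the segment between them does -- here I need to be slightly careful and instead apply~\ref{B-exp} after first moving $x'$ onto a single admissible manifold. The cleanest route: let $W_m$ be the admissible manifold $\graph(\psi_m + c_m)$ where $c_m\in E_m^s$ is the constant translating $\psi_m$ so that its graph passes through $x'$ (this lies in $\hat\CCC_m$ up to a vertical shift, which does not affect the derivative bounds, only $\|\psi(0)\|$, and $\|c_m\|\le 2Ce^{-(n-m)\bar\chi^u}\to 0$, so for $m$ sufficiently negative $W_m$ is still an admissible manifold in the domain). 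Then $0$ and $x'$ both... no: $0\notin W_m$ in general. Instead compare $x'$ with the point of $\graph\psi_m$ directly below it via the expansion estimate~\eqref{eqn:dFn3} applied on $W_m$: both $x'$ and $z'_m := (x'_u,\psi_m(x'_u)+c_m)$... this is getting circular.

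Let me restructure: the honest key step is to show $x$ lies on \emph{some} admissible manifold in $\WWW_n$ whose backward images stay bounded by~\eqref{eqn:backcontr3}, and then invoke uniqueness. Concretely, for each large $k$, let $V^{(k)}_{n-k}\in\WWW_{n-k}$ be the vertical admissible manifold (constant zero graph over $E^u_{n-k}$) translated in $E^s_{n-k}$ to pass through $F_{n-k,n}^{-1}(x)$; then $\GGG$-push it forward $k$ steps to get $W^{(k)}_n := F_{n-k,n}(V^{(k)}_{n-k})$, an admissible manifold in $\WWW_n$ by Theorem~\ref{thm:HP2}\,/\,the graph-transform machinery (the translation is small by~\eqref{eqn:backcontr3}, so the pushforward stays in $\hat\CCC_n(\bar r e^{-M_n},\bar\kappa e^{\alpha M_n})$ for $m$ negative enough), and it contains $x$. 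By~\ref{B-exp} (applied on $V^{(k)}_{n-k}$, using~\eqref{eqn:dFn3}) the backward images of $x$ under $F_{n-k,n}^{-1}$ along $W^{(k)}_n$ match $F^{-1}_{\cdot,n}(x)$, hence satisfy~\eqref{eqn:backcontr3}. Now the hypothesis~\eqref{eqn:asymp-dom2} $\chi^s<\bar\chi^u$ is exactly what makes the contraction in~\eqref{eqn:contraction5}-style estimates (or rather the uniqueness argument of~\ref{B-unique}) force $W^{(k)}_n \to \graph\psi_n$ as $k\to\infty$ in $C^0$ on a bounded domain -- because any two admissible manifolds whose backward images both contract at rate $\bar\chi^u$ must converge, and $\chi^s<\bar\chi^u$ guarantees the stable directions cannot "catch up". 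Since $x\in W^{(k)}_n$ for all $k$ and $W^{(k)}_n\to\graph\psi_n$, and these are all graphs over the same $E^u_n$, we get $x=(x_u,\psi_n(x_u))\in\graph\psi_n$.

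The main obstacle I anticipate is making rigorous the step "$W^{(k)}_n\to\graph\psi_n$": one must show that the uniqueness mechanism of~\ref{B-unique} is robust enough to compare $\psi_n$ not just with other graph-transform limits but with the finite-time pushforwards $W^{(k)}_n$, and that the error between them at the base point $x_u$ decays -- this is where the gap $\chigap>\hat\chi^g$ from asymptotic domination and the stronger bound $\chi^s<\bar\chi^u$ both get used, the former to run the uniqueness/contraction argument along the whole backward orbit and the latter to ensure $x$ itself (not just the origin) is captured. I expect this to reduce, after unwinding, to an application of the $C^0$-contraction estimate underlying~\ref{B-unique} together with a Gronwall-type summation of the defects $\hat\chi^u-\lambda^e_k$, i.e.\ precisely the quantity $M_n(\hat\chi^u)$, so that the whole thing closes using only results already established.
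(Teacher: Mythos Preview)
Your approach is workable in spirit but takes a long detour around a one-line argument that the paper uses directly.  The key tool you are missing is the \emph{attraction estimate} for arbitrary orbits, namely~\eqref{eqn:attraction} (or its $C^{1+\alpha}$ incarnation~\eqref{eqn:nearby-orbit}): for any point $(v_m,w_m)\in\Omega_m^n$ --- not necessarily lying on any admissible manifold --- with forward image $(v_n,w_n)=F_{m,n}(v_m,w_m)$, one has
\[
\|w_n - \psi_n(v_n)\| \le e^{\sum_{k=m}^{n-1}\cl_k^s}\,\|w_m - \psi_m(v_m)\|.
\]
Writing $x_m = F_{m,n}^{-1}(x) = (v_m,w_m)$, the right-hand factor is at most $(1+\bar\gamma)\|x_m\|\le (1+\bar\gamma)Ce^{-(n-m)\bar\chi^u}$ by~\eqref{eqn:backcontr3}, while $\sum_{k=m}^{n-1}\cl_k^s \le (n-m)(\chi^s+\delta)$ for $m$ sufficiently negative (using $\cl_k^s\le\lambda_k^s+\delta$).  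Since $\chi^s<\bar\chi^u$, the product tends to $0$ as $m\to-\infty$, forcing $w_n=\psi_n(v_n)$, i.e.\ $x\in\graph\psi_n$.  This is exactly the content of Theorem~\ref{thm:HP3}\ref{dyn-char} and~\ref{dyn-char-2}, which the paper simply cites.

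Your route --- embedding $x_m$ in a translated admissible manifold $V^{(k)}_{n-k}$, pushing forward, and invoking~\ref{B-unique} --- runs into two obstacles you yourself flag.  First, the uniqueness in~\ref{B-unique} is a statement about \emph{full} invariant families $\{\psi_n\}_{n\le0}$, not about a single finite-time pushforward $W^{(k)}_n$, so it cannot be applied as stated; you would have to unwind its proof, which brings you right back to the $C^0$-contraction estimate~\eqref{eqn:contraction} and ultimately to~\eqref{eqn:attraction}.  Second, your translated manifolds have $\psi(0)\ne0$ and so live in $\CCC_m(r_m,\tau_m,\dots)$ with $\tau_m>0$ rather than in $\hat\CCC_m$; tracking the $\tau$-parameter through the graph transform (via Theorem~\ref{thm:parameters} or~\ref{thm:tau}) is extra bookkeeping that the direct argument avoids entirely.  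The moral: the attraction estimate already compares an arbitrary orbit to $\graph\psi_n$, so there is no need to place $x_m$ on an auxiliary admissible manifold at all.
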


\begin{remark}
Theorem~\ref{thm:HP5} shows that the unstable manifolds have uniformly bounded size, curvature, and dynamical properties on the set of times $\Gamma_M := \{n \mid M_n(\hat\chi^u) \leq M\}$ for each $M\geq 0$.  As $M$ increases, the bounds get worse: size decreases, while curvature and the constant $C$ in~\eqref{eqn:backcontr3} increase.  The trade-off is that it is sometimes possible to guarantee that $\ld(\Gamma_M)$ goes to 1 as $M\to\infty$, in which case we obtain uniform control on a set of times with arbitrarily large lower asymptotic density.
\end{remark}

\subsection{Verifying effective hyperbolicity}\label{sec:verifying}

The quantity $\lambda_n^e$  that appears in the definition of effective hyperbolicity depends on $\lambda_n^u, \lambda_n^s$, and $\beta_n$.  If one has some information about the frequency with which $\beta_n$ becomes large (that is, $|Df_n|_\alpha$ becomes large and/or $\theta_n$ becomes small), then effective hyperbolicity can be verified by considering only $\lambda_n^u$ and $\lambda_n^s$.

To this end, suppose that
\begin{equation}\label{eqn:largebeta}
\lim_{\bar\beta\to \infty} \ud \{ n \mid \beta_n > \bar\beta \} = 0,
\end{equation}
where $\ud$ is upper asymptotic density.
Let $\lambda_n^u, \lambda_n^s$ be as before, and let $\Delta_n$ be the defect from domination defined in~\eqref{eqn:defect}.  Then effective hyperbolicity of $\{f_n\}$ reduces to the condition that
\begin{equation}\label{eqn:eh2}
\chi^u := \llim_{n\to\infty} \frac 1n \sum_{k=0}^{n-1} (\lambda_k^u - \Delta_k) > 0.
\end{equation}
Note that~\eqref{eqn:eh2} does not depend on $\bar\beta$.  We have the following result.

\begin{proposition}\label{prop:verifying}
If a sequence $\{f_n \mid n\geq 0 \}$ satisfies~\eqref{eqn:largebeta} and~\eqref{eqn:eh2}, then it is effectively hyperbolic.  In particular, Theorem~\ref{thm:HP2} applies.
\end{proposition}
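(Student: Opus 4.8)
The plan is to choose the threshold $\bar\beta$ large enough that the times at which $\lambda_n^e$ is forced to be smaller than $\lambda_n^u-\Delta_n$ — namely, by \eqref{eqn:ln}, the times $n$ with $\beta_n>\bar\beta$ — are too sparse, by \eqref{eqn:largebeta}, to destroy the positivity of the Ces\`aro average coming from \eqref{eqn:eh2}.

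First I would record two elementary pointwise bounds. Since $\Delta_n\geq0$ and $|\lambda_n^u|\leq L$ by \ref{C4}, we have $\lambda_n^u-\Delta_n\leq L$; and since $\beta_n\leq e^L\beta_{n-1}$ by \ref{C4}, we have $\frac1\alpha\log\frac{\beta_{n-1}}{\beta_n}\geq -\frac L\alpha$. Comparing the two cases in \eqref{eqn:ln}, one sees that $\lambda_n^e\leq\lambda_n^u-\Delta_n$ for every $n$, with equality whenever $\beta_n\leq\bar\beta$, while when $\beta_n>\bar\beta$,
\[
0\leq(\lambda_n^u-\Delta_n)-\lambda_n^e=\max\!\Big(0,\ (\lambda_n^u-\Delta_n)-\tfrac1\alpha\log\tfrac{\beta_{n-1}}{\beta_n}\Big)\leq L+\tfrac L\alpha=:C_0,
\]
a constant depending only on $L$ and $\alpha$. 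Note that $\lambda_n^u-\Delta_n$, and hence the quantity $\chi^u$ in \eqref{eqn:eh2}, does not depend on $\bar\beta$; enlarging $\bar\beta$ only shrinks the set where the correction term above is nonzero.

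Next, writing $B_{\bar\beta}=\{n\mid\beta_n>\bar\beta\}$, I would decompose, for each $N$,
\[
\frac1N\sum_{k=0}^{N-1}\lambda_k^e=\frac1N\sum_{k=0}^{N-1}(\lambda_k^u-\Delta_k)\ -\ \frac1N\sum_{\substack{0\leq k<N\\ k\in B_{\bar\beta}}}\big[(\lambda_k^u-\Delta_k)-\lambda_k^e\big],
\]
and bound the second sum from above by $C_0\cdot\frac1N\#\big(B_{\bar\beta}\cap[0,N)\big)$. Taking $\llim_{N\to\infty}$ and using $\llim(a_N-b_N)\geq\llim a_N-\ulim b_N$ together with $b_N\geq0$ yields $\chi^e\geq\chi^u-C_0\,\ud(B_{\bar\beta})$. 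By \eqref{eqn:largebeta} we may fix $\bar\beta$ with $C_0\,\ud(B_{\bar\beta})\leq\tfrac12\chi^u$; then $\chi^e\geq\tfrac12\chi^u>0$, which is precisely effective hyperbolicity with this threshold (Definition~\ref{def:eh}). Once effective hyperbolicity is established, Theorem~\ref{thm:HP2} applies directly, e.g.\ with any $\hat\chi^u,\bar\chi^u\in(0,\chi^e)$ and any $\hat\chi^g>\bar\chi^g>0$.

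There is no serious obstacle; the only care needed is the bookkeeping of the correction term, the use of $\ud$ (not $\ld$) to control the density of the bad set $B_{\bar\beta}$, and the observation that the main term $\frac1N\sum(\lambda_k^u-\Delta_k)$ is independent of the choice of $\bar\beta$, so that the two limiting quantities $\chi^u$ and $\ud(B_{\bar\beta})$ can be compared after the threshold is chosen.
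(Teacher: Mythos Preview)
Your proof is correct and follows essentially the same approach as the paper's: bound the correction $(\lambda_n^u-\Delta_n)-\lambda_n^e$ by a constant depending only on $L,\alpha$ on the set $\{\beta_n>\bar\beta\}$, then use \eqref{eqn:largebeta} to choose $\bar\beta$ so that this sparse set cannot lower the Ces\`aro average below a positive threshold. Your write-up is in fact more careful than the paper's terse argument, making explicit the constant $C_0=L(1+\tfrac1\alpha)$ and the inequality $\llim(a_N-b_N)\geq\llim a_N-\ulim b_N$.
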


Similar observations hold regarding Theorem~\ref{thm:HP5}.  For a sequence $\{f_n \mid n\leq 0\}$, we can replace~\eqref{eqn:eh2} with
\begin{equation}\label{eqn:eh3}
\chi^u := \llim_{n\to-\infty} \frac 1{|n|} \sum_{k=n}^{-1} (\lambda_k^u - \Delta_k) > 0,
\end{equation}
and obtain the following.

\begin{proposition}\label{prop:verifying2}
If a sequence $\{f_n \mid n\leq 0\}$ satisfies~\eqref{eqn:largebeta} and \eqref{eqn:eh3}, then it is effectively hyperbolic and has $\llim_m \beta_m < \infty$.  In particular, Theorem~\ref{thm:HP5} applies.
\end{proposition}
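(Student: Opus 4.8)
The plan is to run the backward-time analogue of the argument behind Proposition~\ref{prop:verifying}: choose the threshold $\bar\beta$ entering the definition~\eqref{eqn:ln} of $\lambda_n^e$ large enough that the indices with $\beta_n>\bar\beta$ contribute negligibly to the Ces\`aro averages, and then observe that this \emph{same} $\bar\beta$ simultaneously forces $\beta_m\le\bar\beta$ for infinitely many $m$, which is precisely the extra hypothesis needed in Theorem~\ref{thm:HP5}.

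The one ingredient is the elementary per-term bound
\[
\lambda_k^e \;\geq\; (\lambda_k^u-\Delta_k)-C_0\,\one_{\{\beta_k>\bar\beta\}},\qquad C_0:=L(1+\alpha^{-1}),
\]
valid for every $k\le -1$ and every choice of threshold $\bar\beta$. Indeed, if $\beta_k\le\bar\beta$ this holds with equality by~\eqref{eqn:ln}; and if $\beta_k>\bar\beta$ then $\lambda_k^e=\min\big(\lambda_k^u-\Delta_k,\ \tfrac1\alpha\log\tfrac{\beta_{k-1}}{\beta_k}\big)$, where the first argument is at most $L$ (since $\Delta_k\ge0$ and $\lambda_k^u\le L$) and the second is at least $-L/\alpha$ (since $\beta_k\le e^L\beta_{k-1}$ by~\ref{C4}); as $L-(-L/\alpha)=C_0$, the minimum is at least $(\lambda_k^u-\Delta_k)-C_0$ in either case.

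Now set $\eps:=\chi^u/(2C_0)$, and note $\eps<1$ since $\chi^u\le L$. By~\eqref{eqn:largebeta} we may fix $\bar\beta$ with $\ud(B)<\eps$, where $B:=\{n\le0\mid\beta_n>\bar\beta\}$. Summing the per-term bound over $n\le k\le-1$ gives $\sum_{k=n}^{-1}\lambda_k^e\ge\sum_{k=n}^{-1}(\lambda_k^u-\Delta_k)-C_0\#(B\cap[n,-1])$; dividing by $|n|$ and passing to $\llim_{n\to-\infty}$, using~\eqref{eqn:eh3} for the first sum and the definition of $\ud$ for the second, yields
\[
\chi^e \;\geq\; \chi^u - C_0\,\ud(B) \;>\; \chi^u/2 \;>\; 0,
\]
so $\{f_n\}$ is effectively hyperbolic with threshold $\bar\beta$. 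Moreover $\ud(B)<1$ forces $\{m\le0\mid\beta_m\le\bar\beta\}$ to be infinite — otherwise $\#(B\cap[-N,-1])/N\to1$ as $N\to\infty$, contradicting $\ud(B)<1$ — so $\llim_{m\to-\infty}\beta_m\le\bar\beta<\infty$ and $\beta_m\le\bar\beta$ for infinitely many $m$. Consequently, picking any $\hat\chi^u\in(0,\chi^e)$, the sequence $\{f_n\mid n\le0\}$ meets every hypothesis of Theorem~\ref{thm:HP5} with this threshold $\bar\beta$, and the theorem applies.

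The argument is entirely routine; the only point that needs a little care is the one isolated above — producing a single threshold $\bar\beta$, via a fixed $\eps<1$, that simultaneously makes $\chi^e$ positive and keeps $\{\beta_m\le\bar\beta\}$ infinite — so that one does not have to worry about reconciling two different thresholds when invoking Theorem~\ref{thm:HP5}.
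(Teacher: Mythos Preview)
Your proof is correct and follows essentially the same approach as the paper: bound the deficit $(\lambda_k^u-\Delta_k)-\lambda_k^e$ by a constant on the set $\{\beta_k>\bar\beta\}$, then use~\eqref{eqn:largebeta} to make that set's density small enough. Your version is in fact more complete than the paper's: the paper's proof (which treats Propositions~\ref{prop:verifying} and~\ref{prop:verifying2} together in a few lines) only sketches the effective-hyperbolicity part and leaves the claim $\llim_m\beta_m<\infty$ and the verification of the ``$\beta_m\le\bar\beta$ infinitely often'' hypothesis of Theorem~\ref{thm:HP5} entirely implicit, whereas you handle both explicitly and note that a single choice of $\bar\beta$ works for everything.
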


Proposition~\ref{prop:verifying2} allows us to verify effective hyperbolicity by bounding the asymptotic average of $\lambda_n^e$.  However, a computation of the constants $M_n(\hat\chi^u)$ that appear in Theorem~\ref{thm:HP5} (see~\eqref{eqn:Mn}) requires knowledge of $\lambda_n^e$ itself, and not just its asymptotic average.  A slight simplification can be achieved by observing that~\ref{C4} implies the bound $\lambda_n^e \geq -(1+\frac 1\alpha)L =: -L'$, which allows us to forgo computing the exact sum in~\eqref{eqn:Mn} and instead use
\begin{equation}\label{eqn:Mnleq}
M_n(\hat\chi^u) \leq \sup_{m\leq n} \bigg(
(n-m)\hat\chi - \sum_{k=m}^{n-1} \big(\lambda_k^u - \Delta_k - 2L'\one_k(\bar\beta)\big)\bigg),
\end{equation}
where $\one_n(\bar\beta) = 1$ if $\beta_n>\bar\beta$ and is $0$ otherwise.  This has the advantage that the quantities $|Df_n|_\alpha$ and $\theta_n$ enter only through the number of times that the threshold $\bar\beta$ is exceeded, and the rest of the expression depends only on the linear terms $\lambda_n^{u,s}$.  We will use this approach in Section~\ref{sec:closing} to state a closing lemma using effective hyperbolicity.

\subsection{Parameter conditions for a well-defined graph transform}\label{sec:parameters}

Theorems~\ref{thm:HP2} and~\ref{thm:HP5} are both ultimately derived from the following result, which gives more precise conditions on the parameters $r_n,\tau_n,\sigma_n,\kappa_n$ for the graph transform $G_n \colon \CCC_n \to \CCC_{n+1}$ to be well-defined.  Note that now we allow $\tau_n$ and $\sigma_n$ to take positive values, which puts us in a more general setting than the previous sections.

Given $\delta>0$, consider the following recursive relations on the parameters:
\begin{align}
\label{eqn:rec-r}
 r_{n+1} &\leq e^{(\lambda_n^u -\delta)}r_n, \\
 \label{eqn:rec-t}
 \tau_{n+1} &\geq e^{(\lambda_n^s +  \delta)}\tau_n, \\
 \label{eqn:rec-s}
 \sigma_{n+1} &\geq e^{(\lambda_n^s - \lambda_n^u + \delta)}\sigma_n, \\
 \label{eqn:rec-k}
 \kappa_{n+1} &\geq e^{(\lambda_n^s - (1+\alpha)\lambda_n^u + \delta)}\kappa_n.
\end{align}

\begin{remark}
Removing $\delta$ from~\eqref{eqn:rec-r}--\eqref{eqn:rec-k} gives the exact bounds that the parameters would be required to follow if the maps $f_n$ were linear.
\end{remark}

Given $\xi,\bar\gamma>0$, consider the following additional set of bounds:
\begin{align}
\label{eqn:bd-r}
 \beta_n r_n^\alpha &\leq \xi, \\
 \label{eqn:bd-b}
 \beta_n &\leq \xi \kappa_n, \\
 \label{eqn:bd-t}
 \tau_n &\leq r_n, \\
 \label{eqn:bd-k}
 \kappa_n \tau_n^\alpha &\leq \sigma_n, \\
 \label{eqn:bd-s}
 \sigma_n + \kappa_n r_n^\alpha &\leq\bar\gamma.
\end{align}

\begin{thma}\label{thm:parameters}
For every $\delta>0$, $L>0$, and $\alpha\in(0,1]$,  there exist $\xi>0$ and $\bar\gamma>0$ such that the following is true.

For each $0\leq n<N$ let the maps $f_n$ and the parameters $r_n, \kappa_n > 0$, $\tau_n,\sigma_n\geq 0$ be such that~\ref{C1}--\ref{C5} and~\eqref{eqn:rec-r}--\eqref{eqn:bd-s} are satisfied.  Then the following are true.
\begin{thmenum}
\item
The graph transform 
\begin{equation}\label{eqn:graph-transform}
G_n\colon \CCC_n(r_n,\tau_n,\sigma_n,\kappa_n) \to \CCC_{n+1}(r_{n+1},\tau_{n+1},\sigma_{n+1},\kappa_{n+1})
\end{equation}
 is well-defined for each $0\leq n<N$.
\item Given $\psi_0\in \CCC_0$, the $C^{1+\alpha}$ functions $\psi_n=\GGG_n\psi_0\colon B_n^u(r_n)\to E_n^s$ have the following property:  if $x,y\in (\graph\psi_m)\cap\Omega_m^n$ for some $0\leq m\leq n$, then
 \begin{equation}\label{eqn:param-expansion}
  \|F_{m,n}(x) - F_{m,n}(y)\| \geq e^{\sum_{k=m}^{n-1} (\lambda_k^u - \delta)} \|x-y\|.
 \end{equation}
\item Fix $(v_0,w_0)\in \Omega_0^n$ and let $(v_n,w_n) = F_{0,n}(v_0,w_0)$.  Then
 \begin{equation}\label{eqn:nearby-orbit}
 \|w_n - \psi_n(v_n)\| \leq e^{\sum_{k=0}^{n-1} (\lambda_k^s + \delta)} \|w_0 - \psi_0(v_0)\|.
 \end{equation}
 Moreover, if $(v_n',w_n')=F_{0,n}(v_0',w_0')$ is another trajectory such that $\|w_0'-w_0\| \leq\bar\gamma\|v_0'-v_0\|$, then
 \begin{equation}\label{eqn:C-vnvn}
 \|v_n - v_n'\| \geq e^{\sum_{k=0}^{n-1} (\lambda_k^u - \delta)} \|v_0 - v_0'\|.
 \end{equation}
\item \label{C-4}
Given $\psi_0,\ph_0\in\CCC_0(r_0,\tau_0,\sigma_0,\kappa_0)$, the graph transform $\GGG_n\psi_0$ is completely determined by the restriction of $\psi_0$ to $B_0^u(\hat r_n)$, where
\begin{equation}\label{eqn:C-hatr}
\hat{r}_n := e^{\sum_{k=0}^{n-1} (-\lambda_k^u + \delta)} r_n + 3\xi \sum_{k=0}^{n-1} e^{\sum_{j=0}^{k-1} (-\lambda_j^u + \delta)} \tau_k,
\end{equation}
and similarly for $\ph_0$.  In particular, we have
\begin{equation}\label{eqn:param-contraction}
 \|\psi_n - \ph_n\|_{C^0} \leq e^{\sum_{k=0}^{n-1} (\lambda_n^s + \delta)} \left\| (\psi_0 - \ph_0)|_{B_0^u(\hat{r}_n)} \right\|_{C^0}
\end{equation}
\end{thmenum}
\end{thma}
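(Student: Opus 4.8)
The plan is to establish the four conclusions in order, treating the graph transform $G_n$ as a contraction-type operator on the space of admissible functions and tracking how the parameters evolve under a single application. First I would choose $\xi$ and $\bar\gamma$ small enough (depending only on $\delta,L,\alpha$) that all the nonlinear corrections below are dominated by the $e^\delta$ slack built into the recursions \eqref{eqn:rec-r}--\eqref{eqn:rec-k}; the point is that $Df_n$ differs from $Df_n(0)$ by at most $|Df_n|_\alpha r_n^\alpha \leq \beta_n r_n^\alpha \sin\theta_{n+1} \leq \xi$ on the relevant domain, using \ref{C3d} and \eqref{eqn:bd-r}, and the angle bound \eqref{eqn:C3c} keeps the splitting uniformly transverse after rescaling by $\sin\theta_n$. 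So the nonlinearity enters only through quantities bounded by a constant times $\xi$.

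For conclusion~I, I would show that $f_n(\graph\psi)$ contains a graph over $B_{n+1}^u(r_{n+1})$ and compute the new parameters. The key geometric step is a \emph{cone/graph-transform argument}: write a point of $\graph\psi$ as $(v,\psi(v))$, push it forward by $f_n$, and use the invertibility of the $u$-component of $Df_n(0)$ together with the smallness of the nonlinearity and of $\|D\psi\|\leq \gamma_n \leq \bar\gamma$ to solve implicitly for the new base coordinate; this is where \eqref{eqn:rec-r} (shrinking $r$ so the image still covers a ball of radius $r_{n+1}$, given expansion at least $e^{\lambda_n^u}$ up to the $e^{-\delta}$ factor absorbing nonlinearity) and the domain condition \ref{C5} are used. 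The bound $\|\bar\psi(0)\|\leq \tau_{n+1}$ follows from \eqref{eqn:rec-t} since the $s$-component contracts by $e^{\lambda_n^s}$ and the base point $0$ is fixed; \eqref{eqn:bd-t} guarantees $\tau_n\leq r_n$ so the image of $0$ stays over the domain. The bounds $\|D\bar\psi(0)\|\leq\sigma_{n+1}$ and $|D\bar\psi|_\alpha\leq\kappa_{n+1}$ come from differentiating the graph-transform formula: the derivative $D\bar\psi$ at the image of $v$ equals (roughly) $(Df_n^{s} \circ (\Id,D\psi)) \circ (Df_n^{u}\circ(\Id,D\psi))^{-1}$, which contracts $\|D\psi(0)\|$ by $e^{\lambda_n^s-\lambda_n^u}$ (giving \eqref{eqn:rec-s}) and contracts the H\"older seminorm by an extra factor $e^{-\alpha\lambda_n^u}$ from the chain rule reparametrization (giving \eqref{eqn:rec-k}); the extra terms generated by the nonlinear part of $f_n$ are controlled using \eqref{eqn:bd-b} (which lets $\beta_n$ be absorbed into $\xi\kappa_n$) and \eqref{eqn:bd-k} (which ensures the $\|D\psi(0)\|$-type contributions to $\kappa_{n+1}$ are dominated by $\sigma_n$-type, i.e.\ the parameter $\sigma$ genuinely controls the linear part of $D\psi$). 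Iterating gives $\GGG_n$ well-defined, which is conclusion~I.

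Conclusions~II and~III are then obtained by tracking a single orbit relative to the evolving graphs. For \eqref{eqn:param-expansion}: if $x,y\in\graph\psi_m$ then their images stay in $\graph\psi_k$ for $m\leq k\leq n$, and the difference vector lies in a narrow cone around $E_k^u$ (because $\|D\psi_k\|\leq\bar\gamma$), so each application of $f_k$ expands its norm by at least $e^{\lambda_k^u-\delta}$—this is the same cone-expansion estimate used in~I, now applied to secant vectors; telescoping gives the claim. For \eqref{eqn:nearby-orbit}, I would estimate how the "vertical distance" $\|w_k-\psi_k(v_k)\|$ evolves: because $\graph\psi_k$ is forward-invariant (up to restriction of domain) and the dynamics contracts toward it in the $s$-direction at rate $e^{\lambda_k^s+\delta}$, this quantity shrinks as claimed; here \eqref{eqn:bd-s} keeps everything inside the region where the estimates are valid. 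Estimate \eqref{eqn:C-vnvn} is the expansion estimate \eqref{eqn:param-expansion} applied to the two trajectories, using the hypothesis $\|w_0'-w_0\|\leq\bar\gamma\|v_0'-v_0\|$ to place the secant vector in the unstable cone at time $0$ (so the cone-invariance keeps it there). Conclusion~IV is the ``finite-dependence'' statement: since $\GGG_n\psi_0$ is determined by $f_n(\graph\psi_0)$ restricted to $B_n^u(r_n)$, and the base-point preimages under the (expanding, up to nonlinearity) maps $F_{0,k}$ of that ball are contained in $B_0^u(\hat r_n)$—with the sum in \eqref{eqn:C-hatr} accounting for the drift $3\xi\tau_k$ coming from the nonlinear displacement of the base point at each step—the transform only sees $\psi_0|_{B_0^u(\hat r_n)}$; \eqref{eqn:param-contraction} then follows from \eqref{eqn:nearby-orbit} applied to the pair $\psi_0,\ph_0$ (the $C^0$ distance of graphs is exactly a worst-case vertical distance). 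The main obstacle, and where I would spend the most care, is conclusion~I: getting clean, uniform bounds on $|D\bar\psi|_\alpha$ requires carefully expanding the H\"older seminorm of a composition of the nonlinear maps with the graph, separating the purely linear contraction factor $e^{\lambda_n^s-(1+\alpha)\lambda_n^u}$ from the error terms, and checking that every error term is bounded by $\xi$ times the appropriate parameter so that the $e^\delta$ slack (after choosing $\xi$ small) genuinely closes the recursion—this is the technical heart that all later conclusions rest on.
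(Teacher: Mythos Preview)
Your outline is correct in substance: the cone-invariance/graph-transform argument you sketch, the tracking of $\tau,\sigma,\kappa$ under one step, and the expansion/contraction estimates along and toward the graph are exactly the mechanisms at work, and your identification of the H\"older bound on $D\bar\psi$ as the technical crux is on target.

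The paper, however, organizes the proof differently. Rather than proving Theorem~\ref{thm:parameters} directly, it first proves a more general $C^1$ result (Theorem~\ref{thm:HP1}) in which the H\"older modulus $\kappa_n t^\alpha$ is replaced by an arbitrary modulus of continuity $Z_n^\psi$, and the recursive conditions \eqref{eqn:rec-r}--\eqref{eqn:rec-k} are replaced by sharper conditions \eqref{eqn:rn}--\eqref{eqn:gn} involving the auxiliary quantities $\hl_n^u,\hl_n^s,\cl_n^s,\chi_n,\eps_n^f$ defined in \eqref{eqn:rnbn}--\eqref{eqn:clns}. The proof of Theorem~\ref{thm:parameters} then consists entirely of \emph{estimating those auxiliary quantities}: one shows that under \eqref{eqn:bd-r}--\eqref{eqn:bd-s} the nonlinear error $\eps_n^f$ is at most $3\xi$, that $\hl_n^u \geq \lambda_n^u - \zeta$ and $\cl_n^s \leq \lambda_n^s + \zeta$ for a suitable $\zeta<\delta/(2+\alpha)$, and that the modulus recursion \eqref{eqn:kn} is satisfied by $Z_n^\psi(t)=\kappa_n t^\alpha$ precisely when \eqref{eqn:rec-k} and \eqref{eqn:bd-b} hold. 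The conclusions of Theorem~\ref{thm:parameters} are then read off from those of Theorem~\ref{thm:HP1}.

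What this buys: the paper's two-stage architecture lets the same $C^1$ engine (Theorem~\ref{thm:HP1}) drive both Theorem~\ref{thm:parameters} and the unstable-manifold result Theorem~\ref{thm:HP3}, and it isolates the ``pure analysis'' (choosing $\xi,\bar\gamma$ so that H\"older bounds verify the abstract recursions) from the ``pure dynamics'' (the graph-transform estimates themselves). Your direct approach would merge these two layers; it is shorter for this single theorem but would require repeating the graph-transform analysis when proving the later results.
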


\begin{remark}
Observe that Theorem~\ref{thm:parameters} can be applied to the spaces $\hat\CCC_n$ of admissible manifolds passing through the origin and tangent to $E_n^u$ by taking $\sigma_n=\tau_n=0$.  In this case conditions~\eqref{eqn:rec-r}--\eqref{eqn:bd-s} reduce to
\begin{alignat*}{3}
r_{n+1} &\leq e^{(\lambda_n^u - \delta)} r_n & \beta_n &\leq \xi \min(\kappa_n, r_n^{-\alpha}), \\
\kappa_{n+1} &\geq e^{(\lambda_n^s - (1+\alpha)\lambda_n^u + \delta)}\kappa_n \qquad\qquad & \kappa_n r_n^{\alpha} &\leq \bar\gamma,
\end{alignat*}
and \eqref{eqn:C-hatr} simplifies to $\hat{r}_n := e^{\sum_{k=0}^{n-1} (-\lambda_k^u + \delta)} r_n$.
\end{remark}

\subsection{Finite sequences of diffeomorphisms}\label{sec:finite}



We shall show how the notion of effective hyperbolicity guarantees the existence of sequences of parameters that satisfy both the recursion relations~\eqref{eqn:rec-r}--\eqref{eqn:rec-k} and the bounds~\eqref{eqn:bd-r}--\eqref{eqn:bd-s}, while simultaneously giving good control on the uniformity of $r_n$ and $\kappa_n$.


\begin{thma}\label{thm:tau}
Fix $L,\bar\beta>0$ (presumed large), $\alpha\in(0,1]$, $\chi^u > \hat\chi^u > \bar\chi^u > 0$, and $\hat\chi^s < \bar\chi^s < 0$.  
Then for all sufficiently small $\bar\gamma,\bar{r},\ba>0$, all sufficiently small $\bar{\sigma},\bar\tau\geq 0$, all sufficiently large $\bar\kappa>0$, and all $\hat\kappa\geq\bar\kappa$ such that
\begin{equation}\label{eqn:param-bounds}
\bar\tau\leq\bar{r}, \qquad \hat\kappa\bar\tau\leq\bar\sigma,\qquad \bar\sigma + \hat\kappa\bar{r}^\alpha \leq \bar\gamma,
\end{equation}
every sequence of maps $\{f_n \mid 0\leq n < N\}$ satisfying~\ref{C1}--\ref{C5} and $\beta_0\leq\bar\beta$ has the following properties.
\begin{thmenum}
\item \label{Mus}
For $0\leq n\leq N$, let $M_n^u\geq 0$ be such that
\begin{equation}\label{eqn:M-hyp}
\sum_{k=m}^{n-1} \lambda_k^e \geq (n-m)\hat\chi^u - M_n^u
\end{equation}
for all $0\leq m < n$, and let $M_0^s$ be such that
\begin{equation}\label{eqn:Ms}
\sum_{k=0}^{n-1}  \lambda_k^s \leq n\hat\chi^s - M_n^u + M_0^s
\end{equation}
for all $0\leq n\leq N$.  Then $\theta_n\geq \bar\theta e^{-\alpha M_n^u}$ and the graph transform
\begin{multline}\label{eqn:D-Gn}
\GGG_n\colon \CCC_0(\bar{r},\bar\tau e^{-M_0^s}, \bar\sigma e^{-\alpha M_0^s},\hat\kappa)\\
\to \CCC_n(\bar{r}e^{-M_n^u},\bar\tau e^{-M_n^u} e^{n\bar\chi^s}, \bar\sigma e^{\alpha n \bar\chi^s}, \bar\kappa e^{\alpha M_n^u})
\end{multline}
is well-defined whenever $\hat\kappa \leq \bar\kappa e^{\alpha n \bar\chi^u}$. 
\item \label{Mus2}
Given $\psi_0\in \CCC_0$, the $C^{1+\alpha}$ functions $\psi_n=\GGG_n\psi_0\colon B_n^u(\bar{r}e^{-M_n^u})\to E_n^s$ have the following properties:  if $x,y\in (\graph\psi_m)\cap\Omega_m^n$ for some $0\leq m\leq n$, then
\begin{equation}\label{eqn:hyptimeHP2d}
\|F_{m,n}(x) - F_{m,n}(y)\| \geq e^{-M_n^u} e^{(n-m)\bar\chi^u} \|x-y\|,
\end{equation}
and the same bound applies to the projections to the unstable subspace.
\item \label{D-3}
For $(v_0,w_0)\in \Omega_0^n$ and $(v_n,w_n) = F_{0,n}(v_0,w_0)$, we have
\begin{equation}\label{eqn:nearby-pts2}
\|w_n - \psi_n(v_n)\| \leq e^{M_0^s - M_n^u + n\bar\chi^s} \|w_0 - \psi_0(v_0)\|.
\end{equation}
\item \label{Mus3}
Given $\psi_0,\ph_0\in\CCC_0$, the graph transform $\GGG_n\psi_0$ is completely determined by the restriction of $\psi_0$ to $B_0^u(\hat r_n)$, where $\hat r_n = e^{-n\bar\chi^u} e^{M_n^u} \bar{r} + \bar\tau$, and similarly for $\ph_0$. 
In particular, we have 
\begin{equation}\label{eqn:chius}
\|\psi_n - \ph_n\|_{C^0} \leq 
e^{n\bar\chi^s} e^{M_0^s} (3\bar\tau e^{-M_n^u} + 2 \bar{r}e^{-n\bar\chi^u}),
\end{equation}
\item \label{hyp-times3}
If $\frac 1N \sum_{k=0}^{N-1} \lambda_k^e \geq \chi^u$, then there exists a set $\Gamma\subset [1,N]$ with $\#\Gamma \geq \left(\frac{\chi^u - \hat\chi^u}{L-\hat\chi^u}\right) N$ for which every $n\in \Gamma$ has $\frac 1{n-m}\sum_{k=m}^{n-1} \lambda_k^e \geq \hat\chi^u$ for every $0\leq m<n$, and hence statements \ref{Mus}--\ref{Mus2} apply with $M_n^u=0$.
\end{thmenum}
\end{thma}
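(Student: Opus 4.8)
The plan is to deduce Theorem~\ref{thm:tau} from Theorem~\ref{thm:parameters}.  Given the data $L,\bar\beta,\alpha$ and the $\chi$'s, I would first fix $\delta>0$ smaller than each of $\hat\chi^u-\bar\chi^u$, $\alpha\hat\chi^u$, $\bar\chi^s-\hat\chi^s$, and $\alpha(\hat\chi^u-\bar\chi^u)$; this produces constants $\xi,\bar\gamma>0$ from Theorem~\ref{thm:parameters}, and the smallness/largeness thresholds for $\bar\gamma,\bar r,\bar\theta,\bar\sigma,\bar\tau,\bar\kappa$ are then chosen in terms of $\xi,\bar\gamma,\delta,L,\alpha,\bar\beta$ and the $\chi$'s.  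We may assume $M_n^u$ and $M_0^s$ are the least choices allowed by~\eqref{eqn:M-hyp}--\eqref{eqn:Ms}, since all the conclusions are monotone in $M_n^u,M_0^s$ (increasing them enlarges the target class in~\eqref{eqn:D-Gn}, weakens the angle bound, and relaxes the estimates in~\ref{Mus2}--\ref{Mus3}).  The core task is then, for a fixed sequence $\{f_n\}$ as in the hypotheses, to produce parameter sequences $r_n,\tau_n,\sigma_n,\kappa_n$ satisfying the recursions~\eqref{eqn:rec-r}--\eqref{eqn:rec-k} and the bounds~\eqref{eqn:bd-r}--\eqref{eqn:bd-s}, agreeing at $n=0$ with the domain parameters of~\eqref{eqn:D-Gn}, and with $\CCC_n(r_n,\tau_n,\sigma_n,\kappa_n)\subset\CCC_n(\text{target})$ for all $n$.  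The natural choices are $r_n=\bar r e^{-M_n^u}$ — for which~\eqref{eqn:rec-r} reduces to the minimality identity $M_{n+1}^u\ge M_n^u+\hat\chi^u-\lambda_n^e$ (using $\lambda_n^e\le\lambda_n^u$ and $\delta\le\hat\chi^u$) — together with $\kappa_n=\max\big(\bar\kappa e^{\alpha M_n^u},\,\hat\kappa\exp\sum_{k=0}^{n-1}(\lambda_k^s-(1+\alpha)\lambda_k^u+\delta)\big)$, and $\tau_n,\sigma_n$ obtained by running~\eqref{eqn:rec-t}--\eqref{eqn:rec-s} forward from $\bar\tau e^{-M_0^s}$ and $\bar\sigma e^{-\alpha M_0^s}$.

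Verifying the bounds is where the structure of $\lambda_n^e$ does the work.  The elementary inequalities $\lambda_k^s-(1+\alpha)\lambda_k^u\le-\alpha\lambda_k^e$ and $\lambda_k^s-\lambda_k^u\le\alpha(\lambda_k^u-\lambda_k^e)$ (both immediate from $\alpha\Delta_k\ge\lambda_k^s-\lambda_k^u$ and $\lambda_k^e\le\lambda_k^u-\Delta_k$), together with~\eqref{eqn:M-hyp} at $m=0$, give $\kappa_n\le\bar\kappa e^{\alpha M_n^u}$ — this is precisely where the hypothesis $\hat\kappa\le\bar\kappa e^{\alpha n\bar\chi^u}$ is used — and, with~\eqref{eqn:Ms}, $\hat\chi^s+\delta\le\bar\chi^s$, and the smallness of $\bar\tau,\bar\sigma$ relative to $\bar r$ and $\bar\gamma$, they give~\eqref{eqn:bd-t}--\eqref{eqn:bd-s}.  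I expect the delicate point to be~\eqref{eqn:bd-r} and~\eqref{eqn:bd-b}, which require an a priori bound on $\beta_n$: here one telescopes the second branch of~\eqref{eqn:ln}, writing $\log\beta_n\le\log\bar\beta-\alpha\sum_{k=m+1}^{n}\lambda_k^e$ with $m\le n$ the largest index satisfying $\beta_m\le\bar\beta$ (which exists because $\beta_0\le\bar\beta$), and then combines this with~\eqref{eqn:M-hyp} and the two-sided estimate $-(1+\tfrac1\alpha)L\le\lambda_k^e\le L$ from~\ref{C4} to conclude $\beta_n,\beta_{n-1}\le C_0 e^{\alpha M_n^u}$ for a constant $C_0=C_0(\bar\beta,L,\alpha)$.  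Then~\eqref{eqn:bd-r} and~\eqref{eqn:bd-b} hold as soon as $\bar r\le(\xi/C_0)^{1/\alpha}$ and $\bar\kappa\ge C_0/\xi$, and the angle bound $\theta_n\ge\bar\theta e^{-\alpha M_n^u}$ follows from~\eqref{eqn:C3d} (which gives $\sin\theta_n\ge1/\beta_{n-1}$) once $\bar\theta\le1/C_0$.  Applying Theorem~\ref{thm:parameters} now gives the well-defined graph transforms, $\graph\psi_n=F_{0,n}(\graph\psi_0)$, and the inclusions $\CCC_n(r_n,\ldots)\subset\CCC_n(\text{target})$, completing statement~\ref{Mus}.

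Statements~\ref{Mus2}--\ref{Mus3} are direct translations of conclusions (II)--(IV) of Theorem~\ref{thm:parameters}.  The expansion estimate~\eqref{eqn:hyptimeHP2d} follows from~\eqref{eqn:param-expansion} after $\sum_{k=m}^{n-1}(\lambda_k^u-\delta)\ge\sum_{k=m}^{n-1}\lambda_k^e-(n-m)\delta\ge(n-m)(\hat\chi^u-\delta)-M_n^u\ge(n-m)\bar\chi^u-M_n^u$ by~\eqref{eqn:M-hyp}, with the statement for unstable projections being the last clause of~\eqref{eqn:C-vnvn}; the nearby-point estimate~\eqref{eqn:nearby-pts2} follows from~\eqref{eqn:nearby-orbit} and the bound $\sum_{k=0}^{n-1}(\lambda_k^s+\delta)\le n\bar\chi^s+M_0^s-M_n^u$ from~\eqref{eqn:Ms}; and statement~\ref{Mus3} follows from conclusion~(IV) of Theorem~\ref{thm:parameters}, with $\hat r_n$ extracted from~\eqref{eqn:C-hatr} by bounding $\exp\sum_{k=0}^{n-1}(-\lambda_k^u+\delta)\le e^{M_n^u-n\bar\chi^u}$ and $\tau_k\le\bar\tau$, and the $C^0$ estimate~\eqref{eqn:chius} from~\eqref{eqn:param-contraction} combined with these.

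Finally, for~\ref{hyp-times3} I would apply Pliss' lemma to the finite sequence $(\lambda_k^e)_{0\le k<N}$: since $\lambda_k^e\le\lambda_k^u\le L$ by~\ref{C4} and the hypothesis gives average at least $\chi^u>\hat\chi^u$, Pliss produces $\Gamma\subset[1,N]$ with $\#\Gamma\ge\frac{\chi^u-\hat\chi^u}{L-\hat\chi^u}N$ such that $\sum_{k=m}^{n-1}\lambda_k^e\ge(n-m)\hat\chi^u$ for every $n\in\Gamma$ and $0\le m<n$; this is exactly~\eqref{eqn:M-hyp} with $M_n^u=0$, so statements~\ref{Mus}--\ref{Mus2} apply with $M_n^u=0$ on $\Gamma$.
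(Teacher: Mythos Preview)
Your approach is essentially the paper's: reduce to Theorem~\ref{thm:parameters} by exhibiting parameter sequences satisfying \eqref{eqn:rec-r}--\eqref{eqn:bd-s}, control $\beta_n$ by telescoping the second branch of \eqref{eqn:ln}, and apply Pliss' lemma for Part~\ref{hyp-times3}.  The paper carries out the same reduction via an auxiliary sequence $c_n$ defined by $c_0=1$, $c_{n+1}=\min(e^{\lambda_n^e-\delta'}c_n,1)$ (and a companion $\hat c_n$ with initial value $e^{-N\bar\chi^u}$ to absorb the large $\hat\kappa$), then sets $r_n=\bar r c_n$, $\kappa_n=\bar\kappa\hat c_n^{-\alpha}$ and proves $c_n\ge e^{-M_n^u}$.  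Your direct use of $M_n^u$ in place of $c_n$ is equivalent and the one-step check $M_{n+1}^u\ge M_n^u+\hat\chi^u-\lambda_n^e$ that you invoke for \eqref{eqn:rec-r} and \eqref{eqn:rec-k} is exactly what the capped recursion encodes.

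One point in Part~\ref{Mus3} is too coarse as stated.  In bounding the sum in \eqref{eqn:C-hatr} you write ``$\tau_k\le\bar\tau$'', but the prefactors $\exp\sum_{j=0}^{k-1}(-\lambda_j^u+\delta)\le e^{M_k^u-k\bar\chi^u}$ can be large, and $\bar\tau$ alone does not control the sum.  You need the sharper bound $\tau_k\le\bar\tau e^{-M_k^u}e^{k\bar\chi^s}$ (which you have already derived for the target class in \eqref{eqn:D-Gn}); this cancels the $e^{M_k^u}$ and leaves a geometric series $3\xi\bar\tau\sum_k e^{-k(\bar\chi^u-\bar\chi^s)}$, which is $\le\bar\tau$ once $\xi$ is small.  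With this correction the derivation of $\hat r_n$ and of \eqref{eqn:chius} goes through.
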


\begin{remark}
Note that in \ref{hyp-times3}, we have $M_n^u>0$ for $n\notin \Gamma$, and so in particular $M_n^u$ cannot be omitted in~\eqref{eqn:Ms}, which deals with all $n$, not just $n\in\Gamma$.
\end{remark}

\begin{remark}
The statement of Theorem~\ref{thm:tau} simplifies somewhat if one sets $\bar\sigma=\bar\tau=0$ and considers only admissible manifolds passing through $0$ and tangent to $E_n^u$.  In this case no hypotheses on $\lambda_n^s$ are needed (note that in the domain of $\GGG_n$ in \eqref{eqn:D-Gn}, all the terms containing $M_0^s$ vanish), and in particular \eqref{eqn:Ms} can be omitted.  This version of the result suffices to prove Theorem~\ref{thm:HP2} 
and thus is well-suited to proving existence of SRB measures.
\end{remark}


\begin{remark}
When applying Theorem~\ref{thm:tau} to an infinite sequence $f_n$, positivity of the asymptotic average of $\lambda_k^e$ guarantees effective hyperbolicity in the unstable direction, and the constants $M_n^u$ from~\eqref{eqn:M-hyp} control the non-uniformity of this hyperbolicity.  In principle, negativity of the asymptotic average of $\lambda_k^s$ leads to contraction in the stable direction; we see from~\eqref{eqn:Ms} that to realise this contraction, one actually needs $\sum_{k=0}^{n-1} \lambda_k^s$ to grow more quickly than the constants $M_n^u$.
\end{remark}

\section{Effectively hyperbolic diffeomorphisms of compact manifolds}\label{sec:C1+mfd}

Let $\MMM$ be a compact Riemannian manifold, $U\subset \MMM$ an open set, and $f\colon U\to \MMM$ a $C^{1+\alpha}$ diffeomorphism onto its image, where $\alpha\in(0,1]$.  
Shrinking $U$ if necessary, we can assume that $f$ can be extended to a diffeomorphism from a neighbourhood of $\overline{U}$ to its image.  Then there is an $L>0$ such that for every $x\in U$ and $v,w\in T_x\MMM$, we have
\begin{equation}\label{eqn:closingL}
\begin{gathered}
e^{-L} \leq \frac{\|Df(x)(v)\|}{\|v\|} \leq e^L, \\
 e^{-L} \leq \frac{\sin\measuredangle(Df(x)(v),Df(x)(w))}{\sin\measuredangle(v,w)} \leq e^L, \\
 |Df(x)|_\alpha\leq L.
\end{gathered}
\end{equation}

Let $X\subset U$ be a backwards $f$-invariant set (that is, $f^{-1}X\subset X$).  Assume that on $X$, the tangent bundle has a $Df$-invariant splitting $T_x \mathcal{M} = E^u(x) \oplus E^s(x)$.  The set $X$ may be just a single orbit, and the splitting does not need to be continuous. Given $x\in X$, let
\[
\theta(x) = \measuredangle(E^u(x),E^s(x)).
\]
Writing
\[
\lambda^u(x) = \log\|Df(x)|_{E^u(x)}^{-1}\|^{-1}, \qquad\qquad
\lambda^s(x) = \log\|Df(x)|_{E^s(x)}\|,
\]
denote the defect from domination at $x$ by 
$\Delta(x)=\max\left(0,\frac{\lambda^s(x)-\lambda^u(x)}{\alpha}\right)$.  Fix $\ba>0$ and let
\begin{equation}\label{eqn:lex}
\lambda^e(x) = \min\left(\lambda^u(x) - \Delta(x),\, \frac 1\alpha \log \frac{\sin\theta(f(x))}{\sin\theta(x)}\right)
\end{equation}
whenever $\theta(f(x))<\ba$, and $\lambda^e(x) = \lambda^u(x) - \Delta(x)$ otherwise.

\begin{definition}
We call a diffeomorphism $f$ \emph{effectively hyperbolic} on $X$ if there exists $\bar\theta>0$ such that
\begin{equation}\label{eqn:lambdabig3}
\chi^e := \inf_{x\in X} \llim_{m\to\infty} \frac 1m \sum_{k=1}^m \lambda^e(f^{-k}x) > 0.
\end{equation}
In this case for $\hat\chi\in(0,\chi^e)$ we define $M(x)\geq 0$ by
\[
M(x) = \sup_{m\geq 0} \sum_{k=1}^m \left(\hat\chi - \lambda^e(f^{-k}x)\right).
\]
\end{definition}

Finally, let
\begin{equation}\label{eqn:chisleqchiu}
\chi^s := \sup_{x\in X} \ulim_{m\to\infty} \frac 1m \sum_{k=1}^m \lambda^s(f^{-k}x).
\end{equation}

The following result can be viewed as an Unstable Manifold Theorem for effectively hyperbolic diffeomorphisms.

\begin{theorem}\label{thm:HP6}
Given $L>0$ and $0<\bar\chi<\hat\chi$, the following is true for every sufficiently small $\bar{r},\bar\theta,\bar\gamma>0$ and every sufficiently large $\bar\kappa>0$ satisfying $\bar\kappa \bar{r}^\alpha \leq \bar\gamma$.  If $f$ satisfies \eqref{eqn:closingL} and is effectively hyperbolic on $X$ with $\chi^e > \hat\chi$, then we have the following conclusions.
\begin{thmenum}
\item\label{I} For every $x\in X$, the set $\{n\leq 0 \mid M(f^nx)=0\}$ has positive lower asymptotic density.  
\item $\theta(x) \geq \bar\theta e^{-\alpha M(x)}$.
\item\label{III} There exists a family of submanifolds $\{ W^u(x) \mid x\in X\}$ tangent to $E^u(x)$ such that  $f(W^u(x)) \supset W^u(f(x))$, and each $W^u(x)$ is the image under the exponential map $\exp_x$ of the graph of a $C^{1+\alpha}$ function $\psi_x\colon B^u(x)(e^{-M(x)}\bar{r}) \to E^s(x)$ with $\psi_x(0)=0$, $D\psi_x(0)=0$, $\|D\psi_x\|\leq\bar\gamma$, and $|D\psi_x|_\alpha \leq \bar\kappa e^{\alpha M(x)}$.
\item\label{IV} Given $y,z\in W^u(x)$, we have for all $m\geq 0$
\[
d(f^{-m} y, f^{-m} z) \leq e^{M(x)} e^{-m \bar\chi} d(y,z).
\]
\item\label{V} If $\chi^s < \bar\chi$, then $W^u(x)$ is the unique family  satisfying~\ref{III}
\item\label{VI} If $\chi^s < \bar\chi$, and if $x\in X, y\in \mathcal{M}$ are such that there exists $C\in \RR$ with
\[
d(f^{-m}y,f^{-m}x) \leq \min( \bar{r}e^{-M(x)}, Ce^{-m\bar\chi})
\]
for all $m\geq 0$, then $y\in W^u(x)$.
\end{thmenum}
\end{theorem}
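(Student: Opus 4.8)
The strategy is to transfer the statement to the setting of Theorem~\ref{thm:HP5} via exponential coordinates along backward orbits, and then simply unwind the conclusions. Fix $x\in X$ and, for $n\leq 0$, set $x_n = f^n(x)$, so that $\{x_n\}_{n\leq 0}$ is a backward orbit lying in $X$ (using $f^{-1}X\subset X$). Define $V_n = T_{x_n}\MMM$ (identified with $\RR^d$ via an isometry), let $\Omega_n\subset V_n$ be a fixed ball $B(0,\eta)$ on which $\exp_{x_n}$ is a diffeomorphism, and set $f_n = \exp_{x_{n+1}}^{-1}\circ f\circ \exp_{x_n}$. Then $f_n(0)=0$, each $f_n$ is $C^{1+\alpha}$, and $Df_n(0) = Df(x_n)$ under the chosen identifications. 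Take $E_n^\sigma = E^\sigma(x_n)$; invariance of the splitting gives \ref{C2}, and \eqref{eqn:closingL} gives \ref{C1} together with \ref{C4} for a suitable $L$ (the comparison of $|Df_n|_\alpha$ with $|Df(x_n)|_\alpha$ costs only a bounded distortion factor depending on $\eta$ and the geometry of $\MMM$, which we absorb into $L$; likewise the factor $e^L$ in $\beta_{n+1}/\beta_n$ comes from the angle comparison in \eqref{eqn:closingL}). With $\lambda_n^{u,s} = \lambda^u(x_n),\lambda^s(x_n)$ and $\theta_n = \theta(x_n)$, and with $\beta_n$ chosen as $\max(1,|Df_n|_\alpha)/\sin\theta_{n+1}$, condition \ref{C3} holds (after possibly enlarging $L$), and one checks that the quantity $\lambda_n^e$ of \eqref{eqn:ln}, with threshold $\bar\beta$ suitably related to $\ba$, coincides up to a harmless bounded error with $\lambda^e(x_n)$ from \eqref{eqn:lex}; so effective hyperbolicity of $f$ on $X$ (hypothesis $\chi^e>\hat\chi$, with the infimum over $x\in X$ at \eqref{eqn:lambdabig3}) yields effective hyperbolicity of the sequence $\{f_n\mid n\leq 0\}$ in the sense of \eqref{eqn:lambdabig2}, with the same $\chi^e$; similarly \eqref{eqn:chisleqchiu} matches \eqref{eqn:asymp-dom2}. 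The condition ``$\beta_m\leq\bar\beta$ for infinitely many $m$'' needed in Theorem~\ref{thm:HP5} follows because $\llim \lambda_n^e>0$ forces the second branch in \eqref{eqn:ln} to be non-negative infinitely often, which forces $\beta_{n-1}\geq\beta_n$ infinitely often; combined with $\beta_{n+1}\leq e^L\beta_n$ this is enough to keep $\beta_m$ bounded along a positive-density set — in fact $\llim_m\beta_m<\infty$, exactly as in Proposition~\ref{prop:verifying2}. (Care is needed here: one should verify directly that $\beta_n$ cannot escape to $\infty$, using the fact that every time $\beta_n>\bar\beta$ the $\lambda_n^e$ term is bounded above by $\frac1\alpha\log(\beta_{n-1}/\beta_n)$ together with positivity of the averages.)

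Having set this up, apply Theorem~\ref{thm:HP5} with the $\hat\chi^u$ there equal to $\hat\chi$, $\bar\chi^u = \bar\chi$, and $\hat\chi^g$ any number in $(0,\chi^g)$ (which is positive because $\chi^e>0$ and $\lambda^u-\lambda^s\geq \alpha\lambda^e$ pointwise when $\lambda^s>\lambda^u$, while $\lambda^u-\lambda^s\geq 0 \geq$ something when domination holds — more carefully, $\lambda_k^u-\lambda_k^s \geq \alpha(\lambda_k^u - \Delta_k - \lambda_k^s/\alpha+\lambda_k^u/\alpha)$... one just needs $\chi^g>0$, which follows from $\chi^e>0$). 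Since $M_n(\hat\chi^u)$ in \eqref{eqn:Mn} is precisely $M(f^n x)$ in the theorem's notation (both are $\sup_{m}\sum_{k=m+1}^{?}(\hat\chi-\lambda^e)$ over the appropriate backward window — match the index conventions), conclusion \ref{I} is Theorem~\ref{thm:HP5}\,(I) read through the density bound $(\frac{\chi^e-\hat\chi}{L-\hat\chi})^2$, conclusion \ref{thm:HP6}.II is \ref{thm:HP5}.\ref{thetangeq}, and conclusion \ref{III} is \ref{thm:HP5}.\ref{inv-mfd} after pushing the graphs $\graph\psi_n\subset V_n$ forward by $\exp_{x_n}$ to define $W^u(x_n)$; the relation $f(W^u(x))\supset W^u(f(x))$ is $f_n(\graph\psi_n)\supset\graph\psi_{n+1}$ transported by $\exp$, and the regularity bounds on $\psi_x$ are exactly those in \ref{thm:HP5}.\ref{inv-mfd}. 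Conclusion \ref{IV} comes from \ref{thm:HP5}.\ref{B-exp} applied with $m\mapsto -m$: the estimate $\|F_{-m,0}(x)-F_{-m,0}(y)\|\geq e^{-M_0(\hat\chi)}e^{m\bar\chi}\|x-y\|$ inverts to $\|F_{-m,0}^{-1}(\cdot)\|\leq e^{M(x)}e^{-m\bar\chi}\|\cdot\|$ on $\graph\psi_0$, and $\exp$ distorts distances by a bounded factor that can be absorbed by shrinking $\bar r$; one must be slightly careful that $f^{-m}y$ stays in the domain $\Omega_{-m}^0$, which holds because the exponential contraction keeps the orbit inside the shrinking balls $B^u_{-m}(\bar r e^{-M_{-m}})\times B^s$. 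Conclusions \ref{V} and \ref{VI} are the transported forms of \ref{thm:HP5}.\ref{B-unique} and \ref{thm:HP5}.\ref{B-char}, where the hypothesis $\chi^s<\bar\chi$ supplies \eqref{eqn:asymp-dom2} (hence also asymptotic domination, since $\chi^g \geq \liminf(\lambda^u - \lambda^s) $ and $\liminf\lambda_k^s\leq\chi^s<\bar\chi<\hat\chi<\chi^e\leq\liminf\lambda_k^e\leq\liminf(\lambda_k^u-\Delta_k)\leq\liminf\lambda_k^u$, giving $\chi^g>0$), and the pointwise condition $d(f^{-m}y,f^{-m}x)\leq\min(\bar r e^{-M(x)}, Ce^{-m\bar\chi})$ translates, via $\exp_{x_{-m}}$, into $\|F_{-m,0}^{-1}(\exp_x^{-1}y)\|\leq C'e^{-m\bar\chi}$ for all $m$ together with the point lying in the relevant domains, which is \eqref{eqn:backcontr3}.

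The main obstacle is the bookkeeping in the coordinate change: one must check that replacing $|Df(x)|_\alpha$ (a global Hölder bound valid on $\MMM$) by $|Df_n|_\alpha$ (the Hölder seminorm of the chart representative on a ball of radius $\eta$) only inflates constants by a factor depending on $\eta$ and the curvature of $\MMM$, and similarly that the angle $\measuredangle$ measured in $T_{x_n}\MMM$ versus the intrinsic angle between $E^u,E^s$ agree up to bounded distortion; these are standard but must be stated so that the final ``sufficiently small $\bar r,\bar\theta,\bar\gamma$, sufficiently large $\bar\kappa$'' in Theorem~\ref{thm:HP6} can be deduced from the corresponding quantifiers in Theorem~\ref{thm:HP5} with $L$ replaced by the enlarged constant. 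A secondary point requiring genuine (though short) argument is the claim, needed to invoke Theorem~\ref{thm:HP5} at all, that $\beta_m\leq\bar\beta$ infinitely often; this is not automatic from compactness (since $\theta(x_n)$ may decay) and must be extracted from positivity of $\llim\frac1{|n|}\sum\lambda_k^e$ as in the proof of Proposition~\ref{prop:verifying2}. Everything else is a transcription of Theorem~\ref{thm:HP5} through the exponential map.
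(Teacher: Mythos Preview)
Your approach is exactly the paper's: the paper's entire proof reads ``it suffices to apply Theorem~\ref{thm:HP5} using local coordinates around the backwards trajectory of $x$,'' and you have carried out precisely that reduction, with considerably more detail on the bookkeeping (verifying \ref{C1}--\ref{C5} in charts, matching $M_n(\hat\chi^u)$ with $M(f^nx)$, and checking the $\beta_m\leq\bar\beta$ hypothesis via the mechanism of Proposition~\ref{prop:verifying2}). Your derivation of asymptotic domination for Parts~\ref{V}--\ref{VI} from $\chi^s<\bar\chi$ is slightly tangled in the writing but correct in substance: $\chi^g \geq \llim \frac1{|n|}\sum\lambda_k^u - \ulim\frac1{|n|}\sum\lambda_k^s \geq \chi^e - \chi^s > 0$, using $\lambda_k^e\leq\lambda_k^u$.
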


\begin{remark}
One can obtain local stable manifolds by applying Theorem \ref{thm:HP6} to $f^{-1}$.  Note that this requires $f^{-1}$ to be effectively hyperbolic on the trajectories in question, which is a separate issue from effective hyperbolicity of $f$.  Note also that $U$ is not required to be a trapping region for either $f$ or $f^{-1}$ -- all that is needed is for the entire forward (backward) trajectory of points in $X$ to remain in $U$.
\end{remark}

As in Section~\ref{sec:verifying}, we describe some conditions that guarantee effective hyperbolicity.

\begin{proposition}\label{prop:verifying3}
If $f\colon \mathcal{M}\to\mathcal{M}$ is a $C^{1+\alpha}$ diffeomorphism satisfying
\begin{equation}\label{eqn:angle-pos}
\lim_{\ba\to 0} \ulim_{m\to\infty} \frac 1m \# \{1\leq k\leq m \mid \theta(f^{-k}x) < \ba \} = 0
\end{equation}
and
\begin{align}\label{eqn:eff-hyp-pos}
\chi^u &:= \inf_{x\in X} \llim_{m\to\infty} \frac 1m \sum_{k=1}^m \left(\lambda^u(f^{-k}x) - \Delta(f^{-k}x)\right) > 0
\end{align}
on a backward invariant set $X$,  then it is effectively hyperbolic on $X$, and for every $0<\bar\chi<\hat\chi<\chi^u$ there exist $\bar\gamma,\bar{r},\bar\theta,\bar\kappa>0$ such that~\ref{I}--\ref{IV} of Theorem~\ref{thm:HP6} hold.  If $\chi^s<\bar\chi$, then \ref{V}--\ref{VI} hold as well.
\end{proposition}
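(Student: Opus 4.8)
The plan is to reduce the statement on the manifold to the ``germs'' setting of Theorem~\ref{thm:HP6} (equivalently, to Theorem~\ref{thm:HP5} applied along backward trajectories), and to verify effective hyperbolicity via the analogue of Proposition~\ref{prop:verifying2}. First I would fix $x\in X$ and linearise $f$ along the backward orbit $\{f^{-n}x\mid n\geq 0\}$: set $f_{-n}$ to be $f$ read in exponential coordinates at $f^{-n-1}x$ and $f^{-n}x$, with $E_{-n}^u = E^u(f^{-n}x)$ and $E_{-n}^s = E^s(f^{-n}x)$. The bounds~\eqref{eqn:closingL} translate directly into~\ref{C1}--\ref{C4}: they give $|\lambda_n^u|,|\lambda_n^s|\leq L$, and the angle-distortion bound gives $\beta_{n+1}\leq e^{L'}\beta_n$ for a suitable $L'$ (since $\beta_n$ is essentially $\max(1,|Df_n|_\alpha)/\sin\theta_{n+1}$, and both the H\"older seminorm and the sine of the angle distort by a bounded factor under one step of $f$); here one must be mildly careful that the H\"older seminorm of $Df$ computed in the two charts is comparable, which follows from compactness of $\MMM$ and the $C^{1+\alpha}$ assumption on $\exp$. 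Condition~\ref{C5} is arranged by shrinking $\bar r$ so that $B(0,\eta)$ lies in the image of each chart.

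Second, I would check that~\eqref{eqn:angle-pos} and~\eqref{eqn:eff-hyp-pos} are exactly the hypotheses~\eqref{eqn:largebeta} and~\eqref{eqn:eh3} of Proposition~\ref{prop:verifying2}, with the role of $n\to-\infty$ played by $k\to\infty$ along the backward orbit. The only subtlety is that~\eqref{eqn:largebeta} is a statement about $\beta_n$ becoming large, whereas~\eqref{eqn:angle-pos} only controls $\theta(f^{-k}x)$ becoming small; however $\beta_n$ large means either $|Df_n|_\alpha$ large or $\theta_{n+1}$ small, and since $f$ is a fixed $C^{1+\alpha}$ diffeomorphism of a compact manifold, $|Df_n|_\alpha$ is uniformly bounded by~\eqref{eqn:closingL}. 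Hence for $\bar\beta$ large, $\{n\mid\beta_n>\bar\beta\}$ is contained (up to a shift by one) in $\{k\mid \theta(f^{-k}x)<\ba(\bar\beta)\}$ with $\ba(\bar\beta)\to 0$ as $\bar\beta\to\infty$, so~\eqref{eqn:angle-pos} yields~\eqref{eqn:largebeta}. Likewise $\lambda_n^u,\lambda_n^s,\Delta_n$ computed in the charts agree with $\lambda^u(f^{-n}x)$, $\lambda^s(f^{-n}x)$, $\Delta(f^{-n}x)$, so~\eqref{eqn:eff-hyp-pos} gives~\eqref{eqn:eh3} and also $\llim_m\beta_m<\infty$ along the orbit, i.e.\ $\theta(f^{-k}x)\geq\bar\theta$ for infinitely many $k$. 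Proposition~\ref{prop:verifying2} then tells us $\{f_n\mid n\leq 0\}$ is effectively hyperbolic with $\chi^e\geq\chi^u$ (relative to a threshold $\bar\beta$ that can be chosen uniformly in $x$, using the uniformity of~\eqref{eqn:eff-hyp-pos} and~\eqref{eqn:angle-pos} built into the $\inf_{x\in X}$ and $\lim_{\ba\to0}$).

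Third, given $0<\bar\chi<\hat\chi<\chi^u$, I would invoke Theorem~\ref{thm:HP5} (or directly Theorem~\ref{thm:HP6}, whose proof is the content here) with these thresholds to obtain $\bar\gamma,\bar r,\bar\theta,\bar\kappa$ depending only on $L,\alpha,\hat\chi,\bar\chi,\bar\beta$ — hence uniform over $x\in X$ — together with the functions $\psi_{-n}^{(x)}$, the angle bound, the backward-contraction estimate, and (under $\chi^s<\bar\chi$, which is~\eqref{eqn:asymp-dom2} translated) uniqueness and the characterisation~\eqref{eqn:backcontr3}. Pushing these back through the exponential charts gives the submanifolds $W^u(x)=\exp_x(\graph\psi_x)$ and conclusions~\ref{I}--\ref{VI} of Theorem~\ref{thm:HP6}: item~\ref{I} is the density statement, \ref{III}--\ref{IV} come from \ref{inv-mfd}--\ref{B-exp}, and \ref{V}--\ref{VI} from \ref{B-unique}--\ref{B-char}, once one checks that the inclusion $f(\graph\psi_{-n})\supset\graph\psi_{-n+1}$ in charts corresponds to $f(W^u(f^{-n}x))\supset W^u(f^{-n+1}x)$ and that the chart distortion (controlled by~\eqref{eqn:closingL}) only changes metric estimates by bounded multiplicative constants, which are absorbed into $\bar r,\bar\kappa,\bar\chi$ after shrinking. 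The main obstacle I anticipate is purely bookkeeping: ensuring that passing to exponential coordinates does not corrupt the $C^{1+\alpha}$ control — i.e.\ that $|D\psi_x|_\alpha$ and $|Df_n|_\alpha$ transform correctly and that the ``sufficiently small/large'' constants really can be chosen independently of $x$ — rather than any conceptual difficulty, since the analytic heavy lifting is already done in Theorems~\ref{thm:HP5} and~\ref{thm:HP6}.
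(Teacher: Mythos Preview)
Your proposal is correct and follows essentially the same approach as the paper: pass to local coordinates along the backward orbit, observe that compactness and the $C^{1+\alpha}$ assumption bound $|Df_n|_\alpha$ uniformly so that~\eqref{eqn:angle-pos} yields~\eqref{eqn:largebeta}, verify that~\eqref{eqn:eff-hyp-pos} gives~\eqref{eqn:eh3}, and then invoke Proposition~\ref{prop:verifying2} together with Theorem~\ref{thm:HP6} (whose proof is itself just ``apply Theorem~\ref{thm:HP5} in local coordinates''). The paper does not spell out a separate proof of Proposition~\ref{prop:verifying3}, treating it as the evident manifold analogue of Proposition~\ref{prop:verifying2}; your write-up is in fact more detailed than what the paper provides.
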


Theorem~\ref{thm:HP6} may be interpreted as giving concrete estimates on the constants that appear in Pesin theory, which vary according to the regular set that a point lies in, and which control the geometric and dynamical properties of the stable and unstable manifolds.  In Section~\ref{sec:nuh} we discuss some of the differences between the non-uniform hyperbolicity appearing in that theory and the effective hyperbolicity we use here.

\section{Application I: Constructing SRB measures for general non-uniformly attractors}\label{sec:srb}

In~\cite{CDP11b}, Theorem~\ref{thm:HP2} is used as a crucial part of the proof of existence of SRB measures under some very general conditions.  We briefly describe this result here, as Theorem \ref{srb-measure} below.  We note that Theorem \ref{srb-measure} establishes the existence of an SRB measure for the systems considered in \cite{ABV00}, as well as for some new examples \cite{CDP11b}.

As in the previous section, let $\MMM$ be a compact manifold, $U\subset \MMM$ an open set, and $f\colon U\to \MMM$ a $C^{1+\alpha}$ diffeomorphism onto its image for some $\alpha\in (0,1]$.  Now we also assume that $U$ is a trapping region -- that is, $\overline{f(U)} \subset U$.  This implies that \eqref{eqn:closingL} is satisfied for some $L>0$ on $f(U)$.

Suppose that there exists a forward-invariant set $X\subset U$ of positive Lebesgue measure with two measurable transverse cone families $K^s(x), K^u(x) \subset T_xM$ such that
\begin{enumerate}
\item $\overline{Df(K^u(x))} \subset K^u(f(x))$ for all $x\in X$;
\item $\overline{Df^{-1}(K^s(f(x)))} \subset K^s(x)$ for all $x\in f(X)$.
\end{enumerate}
As discussed in Remark \ref{rmk:cones}, the cone families $K^{s,u}$ can be used to obtain an invariant splitting $T_x \MMM = E^u(x) \oplus E^s(x)$ on $X$.  In particular, we will be able to apply Theorem \ref{thm:HP2} after verifying some further conditions.

Define $\lambda^u, \lambda^s\colon X\to \RR$ by
\begin{align*}
\lambda^u(x) &= \inf \{\log \|Df(v)\| \mid v\in K^u(x), \|v\|=1 \}, \\
\lambda^s(x) &= \sup \{\log \|Df(v)\| \mid v\in K^s(x), \|v\|=1 \}.
\end{align*}
Denote the angle between the boundaries of $K^s(x)$ and $K^u(x)$ by
\[
\theta(x) = \inf\{\measuredangle(v,w) \mid v\in K^u(x), w\in K^s(x) \},
\]
and let
\[
\ud_K(x) := \lim_{\ba\to 0} \ud \{ n\geq 1 \mid \theta(f^n(x)) < \ba \}.
\]
Let $\Delta(x) = \max \left(0,\frac{\lambda^s(x) - \lambda^u(x)}\alpha\right)$ be the defect from domination, and let
\begin{align*}
\chi^u(x) &:= \llim_{n\to \infty} \frac 1n \sum_{k=0}^{n-1} \left(\lambda^u(f^kx) - \Delta(f^kx)\right), \\
\chi^s(x) &:= \ulim_{n\to \infty} \frac 1n \sum_{k=0}^{n-1} \lambda^s(f^kx).
\end{align*}
Let $S = \{ x\in X \mid \ud_K(x) = 0, \chi^u(x) > 0, \chi^s(x) < 0 \}$, so that points in $S$ have (forward) trajectories on which $f$ is effectively hyperbolic and has negative Lyapunov exponents in the stable direction.

\begin{theorem}[\cite{CDP11b}]\label{srb-measure}
If $\Leb S>0$, then $f$ has a hyperbolic SRB measure supported on $\Lambda = \bigcap_{n\geq 0} f^n(U)$.
\end{theorem}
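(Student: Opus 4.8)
The plan is to build the SRB measure via the classical Bowen--Ruelle strategy of pushing forward Lebesgue measure restricted to admissible manifolds and extracting a weak-* limit, with Theorem~\ref{thm:HP2} providing the essential geometric control. First I would use the cone families $K^{s,u}$ and Remark~\ref{rmk:cones} to fix an invariant splitting $T_x\MMM = E^u(x)\oplus E^s(x)$ on $X$, and observe that for $x\in S$ the forward trajectory is effectively hyperbolic: since $\ud_K(x)=0$ one applies Proposition~\ref{prop:verifying} (more precisely its analogue with~\eqref{eqn:largebeta}), using $\chi^u(x)>0$ to deduce $\chi^e>0$ along that trajectory. Because $\Leb S > 0$ and the relevant exponents/densities are measurable, a pigeonhole (or Lusin-type) argument produces a positive-Lebesgue-measure subset $S'\subset S$ on which the constants $\bar\beta$, the gaps $\hat\chi^u > \bar\chi^u$, $\hat\chi^g > \bar\chi^g$, and hence the parameters $\bar r,\bar\kappa,\bar\gamma,\bar\theta$ of Theorem~\ref{thm:HP2} can be chosen uniformly.

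Next I would fix a density point $x_0\in S'$ and a small admissible manifold $W_0 = \graph\psi_0 \in \hat\CCC_0(\bar r,\bar\kappa)$ through $x_0$ transverse to the stable cone, chosen so that $W_0\cap S'$ has positive induced Lebesgue measure on $W_0$. Let $m_0$ denote normalized Lebesgue (Riemannian volume) measure on $W_0$, and form the Cesàro averages $\mu_N = \frac1N\sum_{n=0}^{N-1} f^n_* m_0$. Since $U$ is a trapping region, all these measures are supported in $\overline{f(U)}\subset U$, so the sequence $(\mu_N)$ is tight and has a weak-* limit point $\mu$ supported on $\Lambda = \bigcap_{n\geq0} f^n(U)$. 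The content of Theorem~\ref{srb-measure} is that some such $\mu$ is a \emph{hyperbolic SRB measure}, i.e.\ has nonzero Lyapunov exponents and its conditional measures on unstable manifolds are absolutely continuous.

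The heart of the argument is controlling the geometry of $f^n(W_0)$. Here Theorem~\ref{thm:HP2} is applied to the sequence of germs $f_n$ along the trajectory of a point $y\in W_0\cap S'$: part~II gives that $\GGG_n$ is well-defined and $f^n(W_0)$ is, at effective hyperbolic times $n\in\Gamma(y)$, a graph in $\hat\CCC_n(\bar r,\bar\kappa)$ — hence a piece of admissible manifold of uniformly large size, uniformly bounded curvature, and uniform angle bound (part~I). Crucially $\ld(\Gamma(y))\geq \frac{\chi^e-\hat\chi^u}{L-\hat\chi^u} > 0$ uniformly on $S'$, so a positive proportion of the iterates $f^n(W_0)$ contain large well-curved admissible pieces; part~III (expansion) together with part~IV (exponential $C^0$-contraction of the graph transform under asymptotic domination, available since $\chi^s(x)<0\le\chi^u(x)$ forces $\chi^g>0$) gives that these pieces genuinely expand and that nearby admissibles converge, which is what yields absolute continuity of the limiting conditionals. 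One then runs the standard argument: restrict attention to the subsequence of good times, use the bounded distortion of $f^n$ along the uniformly-$C^{1+\alpha}$ manifolds (Theorem~\ref{thm:parameters}/\ref{thm:tau}) to control densities of $f^n_*m_0$ along unstable directions, pass to the limit, and invoke a Rokhlin-disintegration / Ledrappier--Young type characterization to conclude that $\mu$ (or an ergodic component of it, via positivity of the density of good times) is SRB; hyperbolicity of the exponents follows from $\chi^e>0$ and $\chi^s<0$.

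The main obstacle I anticipate is \textbf{measurable uniformization}: extracting from $S$ a positive-measure set $S'$ on which \emph{all} the relevant quantities — the threshold $\bar\beta$, the rate $\chi^e$ (hence the gap to $\hat\chi^u$), the asymptotic-domination gap, and therefore the uniform parameters $\bar r,\bar\kappa,\bar\theta,\bar\gamma$ — are simultaneously controlled, while also arranging a single admissible manifold $W_0$ meeting $S'$ in positive measure with the right transversality. This is delicate because $\chi^u(x),\chi^s(x),\ud_K(x)$ are only $\llim/\ulim$ quantities defined almost everywhere and a priori not uniform, and because the set of effective hyperbolic times $\Gamma(y)$ depends on $y$; reconciling the "individual trajectory" hypotheses of Theorem~\ref{thm:HP2} with the "positive volume" hypothesis of Theorem~\ref{srb-measure} requires care (this is presumably where the bulk of the work in~\cite{CDP11b} lies). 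A secondary technical point is proving absolute continuity of the limit conditionals rather than merely weak-* convergence — the uniform $C^{1+\alpha}$ bounds from Theorem~\ref{thm:HP2}.II and bounded distortion make this routine in spirit but still require a genuine argument with the disintegration.
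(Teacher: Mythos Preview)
Your outline is sound and would lead to a correct proof, but it takes a somewhat different route from the paper's sketch. You fix a single admissible manifold $W_0$ through a density point of $S'$, push forward the leaf volume $m_0$, and Ces\`aro average; the paper instead pushes forward the \emph{full} Lebesgue measure, setting $m_n = \tfrac1n\sum_{k=0}^{n-1} f^k_*\Leb$, and then isolates the SRB part by a compactness argument in an explicit class $\mathcal{B}$ of ``SRB-like'' measures (measures that disintegrate as bounded-density leaf measures over a compact family $\mathcal{A}\subset\mathcal{W}$ of admissible manifolds with uniform geometric and dynamical bounds). Writing $S_k$ for the set of points in $S$ for which $k$ is an effective hyperbolic time, the paper puts $\mu_n = \tfrac1n\sum_{k=0}^{n-1} f^k_*(\Leb|_{S_k}) \le m_n$ and uses Theorem~\ref{thm:HP2} to show each summand lies in $\mathcal{B}$; compactness of $\mathcal{B}$ then passes this to a nonzero $\mu\le m$ for some ergodic component $m$, and a general smooth-ergodic-theory lemma (``if an ergodic invariant measure dominates a nonzero element of $\mathcal{B}$ then it is SRB'') finishes.

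The trade-off: the paper's route sidesteps exactly the uniformization obstacle you flagged. By working with full Lebesgue measure rather than a single leaf, it never needs to produce a \emph{particular} $W_0$ meeting $S'$ in positive leaf-measure (a Fubini step you glossed over), nor to reconcile charts centered at $x_0$ with trajectories of other points $y\in W_0$ (note that Theorem~\ref{thm:HP2} as stated concerns admissibles through the origin in each chart, so applying it along the trajectory of $y\neq x_0$ to the manifold $W_0$ is not quite literal --- you would really need Theorem~\ref{thm:tau} with $\tau_n,\sigma_n>0$, or a re-centering). Your approach is closer in spirit to the Alves--Bonatti--Viana construction and is perfectly viable once those technicalities are handled; the paper's approach packages the ``absolute continuity of limit conditionals'' step into the single compactness statement for $\mathcal{B}$, which is arguably cleaner but requires setting up that space carefully.
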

\begin{proof}[Sketch of proof]
The idea behind the proof of Theorem~\ref{srb-measure} is to construct an invariant measure $\mu$ as a limit point of the sequence of measures
\begin{equation}\label{eqn:mnSRB}
m_n := \frac 1n \sum_{k=0}^{n-1} f_*^k \Leb,
\end{equation}
and then show that some ergodic component of $\mu$ is an SRB measure.  Using Theorem~\ref{thm:HP2}, one can do this by guaranteeing that the measures $m_n$ give uniformly positive weight to a certain compact subset of the class of ``SRB--like'' measures.

More precisely, one fixes parameters $\theta,r,\kappa$ and lets $\WWW$ be the class of submanifolds of $\MMM$ obtained as $\exp_x\graph\psi$ for some $x\in X$ and $\psi\in \hat\CCC_x(r,\kappa)$ with $\measuredangle(E^u(x),E^s(x))\geq \theta$.  Then $\WWW$ is a compact space of geometrically constrained submanifolds.  To constrain the dynamics while retaining compactness, fix $N,C,\lambda$ and let
\begin{multline*}
\AAA = \{W\in \WWW \mid d(f^{-n}y,f^{-n}z) \leq C e^{-\lambda n} d(y,z)\\
 \text{ for all } y,z\in W \text{ and } 0\leq n\leq N\}.
\end{multline*}
Then fixing $K>0$, one considers the set $\mathcal{B}$ of all Borel measures $\mu$ that can be represented as
\[
\int \ph(x)\,d\mu(x) = \int_\AAA \left(\int_W \ph(x) \rho_W(x)\,dm_W(x)\right)\,d\eta(W),
\]
where $\eta$ is a probability measure on $\AAA$, $m_W$ is volume on $W$, and $\rho_W\colon W\to[1/K,K]$ is a $C^\alpha$ function with $|\rho_W|_\alpha\leq K$.

Using general arguments from smooth ergodic theory, it can be shown that if $m$ is an ergodic invariant measure for which there is a non-zero measure $\mu\in\mathcal{B}$ such that $\mu\leq m$, then $m$ is an SRB measure.  Thus, returning to the measures $m_n$ from \eqref{eqn:mnSRB}, if one can find measures $\mu_n \leq m_n$ such that $\mu_n\in \mathcal{B}$ and $\mu_n\not\to 0$, compactness of $\mathcal{B}$ can be used to find $\mu\in\mathcal{B}$ such that $\mu\leq m$ for some ergodic component $m$ of a limit point of $m_n$, showing that $m$ is an SRB measure.

The measures $\mu_n$ can be found using Theorem~\ref{thm:HP2}.  Writing $S_n$ for the set of points in $S$ for which $n$ is an effective hyperbolic time, the bounds on frequency of effective hyperbolic times show that $\Leb S_n \not\to 0$.  Then one can put $\mu_n = \frac 1n \sum_{k=0}^{n-1} f_*^k (\Leb|_{S_k})\leq m_n$ and use the bounds from Theorem~\ref{thm:HP2} on the graph transform at effective hyperbolic times to show that $f_*^k (\Leb|_{S_k})\in \mathcal{B}$ and hence $\mu_n\in \mathcal{B}$.
\end{proof}

\section{Application II: A finite-information closing lemma}\label{sec:closing}

For uniformly hyperbolic systems, the Anosov closing lemma establishes the existence of a periodic orbit close to any almost-periodic orbit.  More precisely, one has the following result \cite[Theorem 6.4.15]{KH95}.\footnote{In fact the result in \cite{KH95} is somewhat stronger and allows $x,f(x),\dots,f^p(x)$ to be an $\eps$-pseudo-orbit.  Moreover, there is a constant $C$ (independent of $\delta$) such that one can take $\eps = \delta/C$.}

\begin{theorem}[Uniform closing lemma]
Let $\Lambda$ be a (uniformly) hyperbolic set for a $C^1$ diffeomorphism $f$.  Then for every $\delta>0$ there is $\eps>0$ such that for any $x\in \Lambda$ and $p\in\NN$ with $d(x,f^p(x))<\eps$, there exists $z\in B(x,\delta)$ such that $z$ is a hyperbolic periodic point for $f$ with period $p$.
\end{theorem}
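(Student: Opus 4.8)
The plan is to deduce the uniform closing lemma directly from Theorem~\ref{thm:HP6} (or its $C^1$ analogue via Theorems~\ref{thm:uniform}/\ref{thm:HP3}), applied to the periodic pseudo-orbit segment $x, f(x), \dots, f^{p}(x)$. Since $\Lambda$ is uniformly hyperbolic, the derivative cocycle has an invariant splitting $T_z\MMM = E^u(z)\oplus E^s(z)$ with $\inf_{z\in\Lambda}\lambda^u(z)>0>\sup_{z\in\Lambda}\lambda^s(z)$ and $\inf_{z\in\Lambda}\theta(z)>0$; in particular effective hyperbolicity holds with $M(\cdot)\equiv 0$ and the bounds $\bar r,\bar\theta,\bar\gamma,\bar\kappa$ in Theorem~\ref{thm:HP6} are uniform over $\Lambda$. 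First I would fix $\delta>0$ and choose the parameters so that the local unstable and stable manifolds $W^u_{\mathrm{loc}}(z)$, $W^s_{\mathrm{loc}}(z)$ have uniform size $\rho=\rho(\delta)>0$ (shrinking $\rho$ below $\delta$ and below the size guaranteed by the theorem), contract uniformly under $f^{\mp 1}$ at rate $e^{-\bar\chi}$, and intersect transversally in exactly one point whenever their base points are within $\rho$ of each other.

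Next I would set up the fixed-point argument. Given $x\in\Lambda$ and $p$ with $d(x,f^p(x))<\eps$, consider the map $g$ that takes a point $w$ near $x$ to the unique intersection $W^s_{\mathrm{loc}}(f^{-p}(\text{something}))\cap W^u_{\mathrm{loc}}(\cdots)$ — more precisely, I would define, for $w$ in a small ball around $x$, the point $\Phi(w) := W^u_{\mathrm{loc}}(w)\pitchfork f^{-p}\big(W^s_{\mathrm{loc}}(f^p(x))\big)$, or its symmetric variant; the idea is that following $w$ forward $p$ steps lands near $f^p(x)$, hence near $x$, and one then corrects by sliding along stable and unstable leaves. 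A cleaner formulation: define $\Psi$ on the local unstable leaf $W^u_{\mathrm{loc}}(x)$ by $\Psi(w) = [\,f^p(w), x\,]$, the local product-structure bracket, which lands back in $W^u_{\mathrm{loc}}(x)$; since $f^p$ expands the unstable leaf while the bracket with $x$ reprojects along the (uniformly contracted) stable direction, $\Psi$ is a uniform contraction on a leaf of size $\rho$ provided $\eps$ is small enough that $f^p(W^u_{\mathrm{loc}}(x))$ meets the relevant stable leaf. Its unique fixed point $w_*$ satisfies $f^p(w_*)\in W^u_{\mathrm{loc}}(x)$ and $f^p(w_*)\in W^s_{\mathrm{loc}}(w_*)$; chasing the definitions, $f^p(w_*)=w_*$, so $w_*$ is periodic of period dividing $p$, and in fact exactly $p$ after the standard argument ruling out smaller periods (or one simply asserts periodicity with period $p$ in the sense that $f^p(z)=z$). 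Uniform hyperbolicity of the splitting along the orbit of $w_*$ — inherited by continuity from $\Lambda$ since the orbit stays within $O(\eps)$ of $\Lambda$ — gives that $z:=w_*$ is a hyperbolic periodic point, and the contraction estimate bounds $d(z,x)$ by a constant times $\eps$, which is $<\delta$ once $\eps=\delta/C$.

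The main obstacle, I expect, is purely bookkeeping: verifying that Theorem~\ref{thm:HP6} (stated for infinite backward orbits) can be invoked for the finite periodic configuration, and that the local product structure (existence and uniqueness of the bracket $[\cdot,\cdot]$, with Lipschitz dependence on both arguments) follows from the transversality and uniform-size conclusions of the Hadamard--Perron theorem rather than being assumed. Concretely, one applies the theorem to the periodically-extended sequence $f_n$ defined by the orbit segment wrapped around a cycle of length $p$ — for such a sequence $\chi^e$ equals the (positive) average expansion rate and all constants are uniform — and then the stable and unstable leaves at each of the $p$ base points are exactly the graphs $\psi_n$ produced by the theorem. The transversality of $W^u_{\mathrm{loc}}$ and $W^s_{\mathrm{loc}}$ and the resulting well-defined bracket then come from the angle bound $\theta_n\geq\bar\theta$ together with the $C^1$-smallness of the leaves ($\|D\psi_n\|\leq\bar\gamma$), and the contraction rates in \ref{IV} of Theorem~\ref{thm:HP6} (and its stable-manifold counterpart, obtained by applying the theorem to $f^{-1}$) give the Lipschitz constant of $\Psi$ strictly below $1$. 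Once these pieces are assembled the contraction-mapping step and the distance estimate are routine, and the stronger pseudo-orbit version indicated in the footnote follows by the same argument with $f^p(w)$ replaced by the endpoint of the pseudo-orbit.
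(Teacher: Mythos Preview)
First, note that the paper does not prove this statement: it is quoted from \cite[Theorem~6.4.15]{KH95} as background for the paper's own finite-information closing lemma, Theorem~\ref{thm:closing}. So there is no proof in the paper to compare against directly; the relevant comparison is with the argument for Theorem~\ref{thm:closing} in Section~\ref{sec:appspfs}, which specialises to the uniform case.

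Your proposal has a genuine gap. The map $\Psi(w)=[f^p(w),x]$ on $W^u_{\mathrm{loc}}(x)$ is an \emph{expansion}, not a contraction: $f^p$ stretches distances along the unstable leaf by a factor at least $e^{p\bar\chi}\gg 1$, while the holonomy $[\,\cdot\,,x]$ along stable leaves onto $W^u_{\mathrm{loc}}(x)$ is bi-Lipschitz with constant close to $1$, so the composition has Lipschitz constant of order $e^{p\bar\chi}$. In particular $\Psi$ does not even carry a leaf of size $\rho$ into itself. Replacing $\Psi$ by its inverse (pull back by $f^{-p}$, then project) repairs the contraction, but then you must show that the relevant piece of $f^{-p}$ applied to a $u$-admissible manifold near $f^p(x)$ is again a $u$-admissible graph of size $\rho$ near $x$ --- which is precisely a graph-transform statement, not a leafwise one. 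There is a second issue: even granting a fixed point $w_*$ of your $\Psi$, the relation $[f^p(w_*),x]=w_*$ only says $w_*\in W^s_{\mathrm{loc}}(f^p(w_*))$, which does not by itself force $f^p(w_*)=w_*$; a matching argument on the stable side is still needed.

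The paper's argument for Theorem~\ref{thm:closing} avoids both problems by working one level up: it iterates the graph transform induced by $f^p$ on the \emph{space} of $u$-admissible manifolds near $x$. By Theorem~\ref{thm:tau} (in particular Part~\ref{Mus3} and \eqref{eqn:chius}) this is a genuine contraction, whose fixed point is a $u$-admissible manifold $W^u$ with $f^p(W^u)\supset W^u$. The symmetric construction for $f^{-p}$ on $s$-admissible manifolds yields $W^s$ with $f^{-p}(W^s)\supset W^s$, and the unique transverse intersection $z=W^u\cap W^s$ is then automatically fixed by $f^p$. In the uniformly hyperbolic setting all the constants $M^u,M^s,\hat M^u,\hat M^s$ vanish and this reduces to the classical proof.
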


A similar result holds for non-uniformly hyperbolic systems \cite[\S 3]{aK80}.  A non-uniformly hyperbolic set $\Lambda$ has a filtration $\Lambda = \bigcup_{K>0} \Lambda_K$, where the sets $\Lambda_K$ are compact but non-invariant, and the parameter $K$ may be thought of as controlling the amount of non-uniformity in the trajectory of $x\in \Lambda_K$, with larger values of $K$ corresponding to worse non-uniformities.

\begin{theorem}[Non-uniform closing lemma]\label{thm:nuclosing}
Let $\Lambda$ be a non-uniformly hyperbolic set for a $C^{1+\alpha}$ diffeomorphism $f$.  Then for every $\delta>0$ and $K>0$ there is $\eps>0$ such that for any $x\in \Lambda_K$ and $p\in \NN$ with $f^p(x) \in \Lambda_K \cap B(x,\eps)$, there exists $z\in B(x,\delta)$ such that $z$ is a hyperbolic periodic point for $f$ with period $p$.
\end{theorem}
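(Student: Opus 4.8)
The plan is to reduce the non-uniform closing lemma to an application of the effective-hyperbolicity machinery, specifically Theorem~\ref{thm:tau} applied to the finite orbit segment $x, f(x), \dots, f^p(x)$. First I would set up local coordinates: fix $x\in\Lambda_K$ with $f^p(x)\in\Lambda_K\cap B(x,\eps)$, and for $0\le n\le p$ let $f_n$ be the map $f$ written in coordinates given by $\exp_{f^n(x)}$ on $T_{f^n(x)}\MMM$, with the splitting $E_n^u\oplus E_n^s$ coming from the (non-uniformly hyperbolic) Oseledets-type splitting along the orbit. The membership $x, f^p(x)\in\Lambda_K$ supplies the uniform bounds: the constants $\lambda_n^{u,s}$, $\theta_n$, and $\beta_n$ are all controlled in terms of $K$ and $\alpha$, and crucially $\beta_0\le\bar\beta$ once $K$ is fixed, so that~\ref{C1}--\ref{C5} hold with parameters depending only on $K$. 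Because $\Lambda$ is non-uniformly hyperbolic, the averages of $\lambda_n^u$ (resp.\ $-\lambda_n^s$) along the segment are bounded below by a positive constant, so the hypotheses $\frac1p\sum\lambda_k^e\ge\chi^u$ of part~\ref{hyp-times3} of Theorem~\ref{thm:tau} are met; in particular one gets admissible manifolds $\psi_n$ with uniformly bounded size $\bar r$, curvature $\bar\kappa$, and the expansion estimate~\eqref{eqn:hyptimeHP2d}, with $M_n^u$ and $M_0^s$ controlled by $K$ via the filtration.

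The heart of the argument is then a standard graph-transform / fixed-point construction for the periodic shadowing orbit, now carried out using the local unstable and stable manifolds produced by Theorem~\ref{thm:tau} rather than the classical Pesin manifolds. Concretely: by Theorem~\ref{thm:tau}\ref{Mus2} the forward images $F_{0,n}(\graph\psi_0)$ are uniformly-sized graphs over $E_n^u$ on which $F$ expands, and dually (applying the theorem to the inverse cocycle, or directly using the stable parts) one has uniformly-sized local stable manifolds at $f^p(x)$ along which $F$ contracts. The condition $d(x,f^p(x))<\eps$ means that, in the coordinates at $x$, the point $f^p(x)$ lies within $\eps$ of the origin; choosing $\eps$ small compared to $\bar r e^{-c(K)}$ guarantees that the local unstable manifold $W^u$ at $x$ and the local stable manifold $W^s$ at $f^p(x)$, once both are viewed near $x$, intersect transversally in a single point $w$. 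I would then define a map $\Phi$ on (a small ball in) $W^u(x)$ by: flow a point forward $p$ steps under $F_{0,p}$, land near $f^p(x)$, project along the stable direction onto $W^u$-coordinates, and pull the resulting point back to $W^u(x)$; the hyperbolic estimates~\eqref{eqn:hyptimeHP2d} and~\eqref{eqn:nearby-pts2} make $\Phi$ a contraction on the appropriate metric, so it has a unique fixed point $z_0$. By construction $F_{0,p}(z_0)$ lies on the stable manifold through $z_0$'s image and the orbit of $z_0$ stays within the domain, so extending periodically gives a genuine periodic orbit for $f$ of period $p$; transferring back through $\exp_x$ yields the periodic point $z$, and the size bounds give $z\in B(x,\delta)$ provided $\eps=\eps(\delta,K)$ was chosen small enough. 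Hyperbolicity of $z$ follows from the uniform expansion/contraction rates along its (finite, then periodic) orbit, which are inherited from $\bar\chi^u$ and $\bar\chi^s<0$.

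The step I expect to be the main obstacle is the bookkeeping needed to make the fixed-point argument close up \emph{exactly periodically} rather than merely producing a shadowing orbit on the finite segment. The issue is that Theorem~\ref{thm:tau} is a statement about finite compositions $f_0,\dots,f_{p-1}$, whereas a periodic point requires the derivative data and the manifolds to match up after $p$ steps; one must check that the invariant manifolds at time $p$ (built from the orbit segment of $f^p(x)$) are close enough to those at time $0$ (built from $x$) that the contraction argument survives the ``re-gluing''. This is where the hypothesis $f^p(x)\in\Lambda_K$ — not just $f^p(x)$ close to $\Lambda$ — is essential: it ensures the stable manifold at $f^p(x)$ has size bounded below by $\bar r e^{-c(K)}$ and curvature bounded above by $\bar\kappa e^{c(K)}$, uniformly, so that the transversal intersection with $W^u(x)$ is uniformly transverse and the fixed point lies well inside all domains of definition. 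A secondary technical point is controlling the distortion coming from identifying the coordinate charts at $f^n(x)$ for $n$ near $0$ and near $p$ when these base points are $\eps$-close; this is handled by~\eqref{eqn:closingL} together with $\eps$ small. Once these uniformities are in hand, the remaining estimates are routine contraction-mapping bookkeeping of exactly the kind already packaged in Theorem~\ref{thm:tau}.
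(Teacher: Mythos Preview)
The paper does not give its own proof of this statement: Theorem~\ref{thm:nuclosing} is quoted as a known result with a citation to Katok~\cite[\S 3]{aK80}, and serves only as motivation for the paper's own closing lemma (Theorem~\ref{thm:closing}), which is the result actually proved via the effective-hyperbolicity machinery in Section~\ref{sec:appspfs}.

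Your proposal to derive Theorem~\ref{thm:nuclosing} from Theorem~\ref{thm:tau} has a genuine gap. You assert that membership in $\Lambda_K$ controls $\lambda_n^{u,s}$, $\theta_n$, $\beta_n$ along the whole orbit segment and that ``the averages of $\lambda_n^u$ \dots\ along the segment are bounded below by a positive constant'', but neither follows from $x,f^p(x)\in\Lambda_K$ in the original (non-Lyapunov) coordinates you are using. Pesin regularity controls these quantities at the endpoints and yields only subexponentially-tempered bounds in between. More seriously, the paper's own discussion in Section~\ref{sec:nuh} exhibits an explicit (in fact uniformly) hyperbolic sequence that is \emph{not} effectively hyperbolic: when $E^u$ has several directions expanding at different times, the quantity $\lambda_n^u$ (the \emph{minimum} expansion rate over $E_n^u$) can be negative at every step even though all Lyapunov exponents in $E^u$ are positive. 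For such trajectories the hypothesis $\frac{1}{p}\sum_k\lambda_k^e\ge\chi^u$ of Theorem~\ref{thm:tau}\ref{hyp-times3} simply fails, and your reduction does not go through. Katok's original argument works in Lyapunov coordinates, where the expansion and contraction rates are uniform by construction and this obstruction does not arise; attempting to bypass that step via effective hyperbolicity in the ambient metric runs directly into the phenomenon the paper itself highlights.
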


The difficulty in applying Theorem \ref{thm:nuclosing} is that determining the non-uniformity constant $K$ associated to some point $x$ requires an infinite amount of information, because $K$ depends on the entire forward and backward trajectory of $x$.  Here we use effective hyperbolicity to give a set of criteria for existence of a nearby hyperbolic periodic orbit that can be verified with a finite amount of information, since they depend only on the action of $f$ near the points $x,f(x),\dots,f^p(x)$.

As in the previous sections, let $\MMM$ be a compact Riemannian manifold and $f\colon U\to \MMM$ a $C^{1+\alpha}$ diffeomorphism from an open set $U$ onto its image.  By shrinking $U$ if necessary, we can extend $f$ to a neighbourhood of $\overline{U}$ so that \eqref{eqn:closingL} holds for some uniform $L>0$.

\begin{definition}\label{def:eh-seg}
We say that an orbit segment $\{x,f(x),\dots,f^p(x)\}\subset U$ is \emph{completely effectively hyperbolic with parameters $M^s,M^u,\hat M^s,\hat M^u>0$, rates $\hat\chi^s < 0 < \hat\chi^u$, and threshold $\bar\theta>0$} if there are $Df$-invariant transverse cone families $K^s,K^u$ on $\{x,f(x),\dots,f^p(x)\}$ such that defining $\lambda^u,\lambda^s,\theta$ as in the previous section and writing $\one_{\bar\theta}$ for the indicator function of the set $\{z \mid \theta(z) < \bar\theta\}$, we have
\begin{equation}\label{eqn:theta-ends}
\theta(x) \geq \bar\theta, \qquad \theta(f^p(x))\geq \bar\theta,
\end{equation}
and the quantities
\begin{align}
\label{eqn:ceh-Mnu}
M_n^u &= \max_{0\leq m<n} \left( (n-m)\hat\chi^u - \sum_{k=m}^{n-1} (\lambda^u - \Delta - L\one_{\bar\theta})(f^kx) \right), \\
\label{eqn:ceh-Mns}
M_n^s &= \max_{n<m\leq p} \left( (n-m)\hat\chi^s + \sum_{k=m}^{n-1} (\lambda^s + \Delta + L\one_{\bar\theta})(f^kx) \right)
\end{align}
satisfy
\begin{align}
\label{eqn:ceh-Mu}
M^u &\geq M_p^u, \\
\label{eqn:ceh-Ms}
M^s &\geq M_0^s.
\end{align}
Moreover, we require that
\begin{align}
\label{eqn:ceh-hMu}
\hat M^u &\geq M_n^s - \sum_{k=n+1}^p (\lambda_k^u - \hat\chi^u) \qquad\text{for all }0< n\leq p, \\
\label{eqn:ceh-hMu2}
\hat M^u &\geq M^s - \sum_{k=1}^p (\lambda_k^u - \hat\chi^u),
\end{align}
and
\begin{align}
\label{eqn:ceh-hMs}
\hat M^s &\geq M_n^u + \sum_{k=0}^{n-1} (\lambda_k^s - \hat\chi^s) \qquad\text{for all }0\leq n<p, \\
\label{eqn:ceh-hMs2}
\hat M^s &\geq M^u + \sum_{k=0}^{p-1} (\lambda_k^s - \hat\chi^s).
\end{align}
\end{definition}

\begin{remark}
We stress again that Definition~\ref{def:eh-seg} only requires verifying a finite amount of information: the cones $K^s,K^u$ do not need to be invariant along the entire trajectory of $x$, but only along $p$ iterates of it, and no asymptotic quantities (such as Lyapunov exponents or Lyapunov charts) need to be computed.
\end{remark}

We can use Theorem~\ref{thm:tau} to prove the following closing lemma regarding completely effectively hyperbolic orbit segments.

\begin{theorem}\label{thm:closing}
Given $L,M^u,M^s,\hat M^u,\hat M^s\in\RR$, $\hat\chi^s<0<\hat\chi^u$, and $\bar\theta, \delta>0$, there exist $\eps>0$ and $p_0\in \NN$ such that if $f\colon U \to \mathcal{M}$ satisfies \eqref{eqn:closingL}, then the following is true.  If $x\in U$ and $p\in \NN$ are such that 
\begin{enumerate}
\item $p\geq p_0$ and the orbit segment $\{x,f(x),\dots,f^p(x)\}\subset U$ is completely effectively hyperbolic with parameters $M^s,M^u,\hat M^s,\hat M^u$, rates $\hat\chi^s,\hat\chi^u$, and threshold $\bar\theta$;
\item $d(x,f^px)<\eps$, and there exist maximal-dimensional subspaces $E^u \subset K^u(x)$, $E^s\subset K^s(x)$ such that $d(Df^p(E^\sigma),E^\sigma)<\eps$ for $\sigma=s,u$,
\end{enumerate}
then there exists a hyperbolic periodic point $z=f^pz$ such that $d(x,z)<\delta$.  Moreover, writing $\hat E^s,\hat E^u$ for the stable and unstable subspaces of $Df^p(z)$, we have $d(\hat E^\sigma,E^\sigma)<\delta$ for $\sigma=s,u$.
\end{theorem}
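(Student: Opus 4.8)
The plan is to close the orbit segment up into a bi-infinite $p$-periodic sequence of local diffeomorphisms and then feed that sequence into Theorem~\ref{thm:tau} twice: once forwards, to produce a local unstable disk $W^u$, and once for the reversed sequence, to produce a local stable disk $W^s$. The sought periodic point $z$ is the unique intersection $W^u\cap W^s$, and its hyperbolicity and the estimates on $\hat E^\sigma$ fall out of the geometric conclusions of Theorem~\ref{thm:tau}. Concretely, one works in exponential charts at the points $x_n=f^n(x)$, $0\le n\le p$, identifies a neighbourhood of $x_p$ with one of $x_0$ (legitimate since $d(x_0,x_p)<\eps$), transports the subspaces $E^u(x_n)=Df^nE^u$, $E^s(x_n)=Df^nE^s$ and the cones $K^{u,s}$ along the segment and closes them up using $d(Df^pE^\sigma,E^\sigma)<\eps$. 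Extending periodically yields maps $\{f_n\mid n\in\ZZ\}$ satisfying~\ref{C1}--\ref{C5} with $\lambda_n^{u,s}=\lambda^{u,s}(x_{n\bmod p})$, $\theta_n=\theta(x_{n\bmod p})$, $\beta_n\asymp L/\sin\theta_{n+1}$, the only deviation being that $\|f_n(0)\|\le C\eps=:\eps_0$ at the closing times $n\equiv -1\pmod p$ rather than $f_n(0)=0$; this is harmless because such a value perturbs the graph transform by $O(\eps_0)$, which one absorbs by running Theorem~\ref{thm:tau} with positive $\bar\tau\asymp\eps_0$ (that theorem already permits $\tau_n>0$). The key accounting point is that, since the angle terms in $\lambda_n^e$ telescope over a full period and, by~\eqref{eqn:closingL}, consecutive angles differ by at most a bounded factor, Definition~\ref{def:eh-seg} is tailored so that the constants $M_n^u,M_n^s$ it defines, together with $\hat M^u,\hat M^s$, supply exactly the quantities demanded in hypotheses~\eqref{eqn:M-hyp}--\eqref{eqn:Ms} of Theorem~\ref{thm:tau}, with $\hat M^s$ playing the role of ``$M_0^s$'' for the forward application and $\hat M^u$ for the time-reversed one, and with all these bounds uniform in $p$ once $p\ge p_0$ is large enough that the period-averages of $\lambda_k^e$ and $\lambda_k^s$ beat $\hat\chi^u,\hat\chi^s$ after subtracting the fixed budgets $M^u,M^s$.

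Next, I would apply Theorem~\ref{thm:tau} to $\{f_n\mid 0\le n\le p\}$ to obtain well-defined graph transforms $\GGG_n$ on the appropriate $\CCC_n$ (with $\tau_n,\sigma_n=O(\eps_0)$), and then iterate $\GGG_p$ around the period: by the ``forgetfulness'' estimate~\eqref{eqn:chius}, whose right-hand side is $\le$ const$\cdot\eps_0$ plus a term tending to $0$ as $p\to\infty$, the iterates stabilise and yield in the limit a $p$-periodic family of unstable admissibles $\psi_n^u$ with $f_n(\graph\psi_n^u)\supset\graph\psi_{n+1}^u$, $\|\psi_n^u(0)\|=O(\eps_0)$, $\|D\psi_n^u\|\le\bar\gamma$, curvature $\le\bar\kappa e^{\alpha M_n^u}$, and domains $B_n^u(\bar r e^{-M_n^u})$ whose radii are bounded below independently of $p$; moreover $\theta_n\ge\bar\theta e^{-\alpha M_n^u}$, so in particular $\theta(x_0),\theta(x_p)\ge\bar\theta$ as required by~\eqref{eqn:theta-ends}. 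Running the same argument on the reversed periodic sequence produces a $p$-periodic family of stable admissibles $\psi_n^s$, graphs over $B_n^s$ of comparable size and regularity with $f_n(\graph\psi_n^s)\subset\graph\psi_{n+1}^s$.

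Now set $W^u=\graph\psi_0^u$ and $W^s=\graph\psi_0^s$. These are $C^{1+\alpha}$ graphs of slope $\le\bar\gamma$ passing within $O(\eps_0)$ of the origin, over $E_0^u$ and $E_0^s$ respectively, and $\measuredangle(E_0^u,E_0^s)=\theta(x_0)\ge\bar\theta$; choosing $\bar\gamma\ll\sin\bar\theta$ and $\eps_0$ much smaller than the (uniform in $p$) radii $\bar r e^{-M^u}$ and $\bar r e^{-M^s}$, a quantitative transverse-intersection argument produces a unique $z\in W^u\cap W^s$ with $\|z\|=O(\eps_0)$. The invariance relations give $\Phi^{-1}(W^u)\subset W^u$ and $\Phi(W^s)\subset W^s$ for $\Phi:=F_{0,p}$ read in the closed-up chart, and by~\eqref{eqn:hyptimeHP2d} and its stable analogue $\Phi^{-1}|_{W^u}$ and $\Phi|_{W^s}$ are contractions once $p\ge p_0$ (rates $e^{-p\bar\chi^u+M^u}$ and $e^{p\bar\chi^s+M^s}$, both $<1$). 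Their unique fixed points lie in $W^u$, respectively $W^s$; and any $\Phi$-fixed point in the relevant small ball has bounded forward and backward orbit, hence lies on both manifolds by the uniqueness/characterisation results available for a uniformly hyperbolic periodic sequence (the periodic specialisation of Theorem~\ref{thm:HP5}\ref{B-char} and Theorem~\ref{thm:HP6}\ref{VI}, whose extra hypotheses hold automatically here since $\lambda^s$ has negative period-average). Hence all of these fixed points coincide with $z$, and $\zeta:=\exp_{x_0}(z)$ satisfies $f^p\zeta=\zeta$. Because $\zeta$'s orbit is $O(\bar r)$-close to the orbit segment of $x$, the cones $K^{u,s}$ remain $Df$-invariant along it (robustness via~\eqref{eqn:closingL}), so $Df^p(\zeta)$ is hyperbolic; finally $\hat E^u(\zeta)=T_\zeta W^u$ makes angle $\le\arcsin\bar\gamma$ with $E^u$ and $\hat E^s(\zeta)=T_\zeta W^s$ makes angle $\le\arcsin\bar\gamma$ with $E^s$.

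It remains to collect the constants: take $\bar r,\bar\gamma,\bar\theta$ as small as Theorem~\ref{thm:tau} demands and in addition require $\bar r<\delta/2$ and $\arcsin\bar\gamma<\delta$; take $\bar\kappa$ correspondingly large; take $p_0$ large enough for the period-averaging and for the two contractions above; and finally take $\eps$ small, depending only on $L,M^u,M^s,\hat M^u,\hat M^s,\bar r,\bar\gamma,\bar\theta$ but crucially not on $p$, so that the closing error fits inside the disks and the cone-invariance survives. Then $d(x,\zeta)\le\bar r+O(\eps)<\delta$ and $d(\hat E^\sigma,E^\sigma)<\delta$, as claimed. The main obstacle — and the reason the argument must route through Theorem~\ref{thm:tau} rather than applying a direct hyperbolic-fixed-point argument to $f^p$ — is precisely the requirement that $\eps$ and the disk sizes be uniform in $p$: the $C^{1+\alpha}$ norm of $F_{0,p}$ grows like $e^{Lp}$, so a naive fixed-point scheme would only work at scale $e^{-cp}$ and force $\eps\to 0$ with $p$, whereas Theorem~\ref{thm:tau} controls the graph-transform parameters using only the fixed budgets $M^u,M^s,\hat M^u,\hat M^s$. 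A secondary technical point is the bookkeeping in the first step that converts the finitely-checkable quantities of Definition~\ref{def:eh-seg} into the hypotheses~\eqref{eqn:M-hyp}--\eqref{eqn:Ms}.
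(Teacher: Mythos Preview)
Your proposal is correct and follows essentially the same route as the paper's proof: apply Theorem~\ref{thm:tau} along the orbit segment to obtain a well-defined graph transform $\WWW_0^u\to\WWW_p^u$, use the smallness of $d(x,f^px)$ and $d(Df^pE^\sigma,E^\sigma)$ to compose with the chart change $\exp_x\circ\exp_{f^p(x)}^{-1}$ and thereby view this as a self-map of $\WWW_0^u$, invoke the contraction estimate~\eqref{eqn:chius} to get a fixed $u$-admissible $W^u$, repeat for $s$-admissibles, and take $z=W^u\cap W^s$. The paper handles the closing error by allowing $\tau,\sigma>0$ in $\CCC_n$ and letting the chart change double these parameters, rather than by putting $f_n(0)\neq 0$ as you do, but this is only a cosmetic difference; likewise the paper is terser about why $z$ is fixed and hyperbolic, while you spell out the contraction-on-$W^{u,s}$ and cone-invariance arguments, but the substance is the same.
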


We give a brief sketch of the argument -- a more detailed proof is in Section~\ref{sec:appspfs}.  Let $W^u,W^s$ be $u$- and $s$-admissible manifolds through $x$, respectively.  For an appropriate choice of $r>0$, the hypotheses are enough to guarantee that $f^{np}(W^u) \cap B(x,r)$ converges to a $u$-admissible manifold near $x$ as $n\to\infty$, and similarly, $f^{np}(W^s) \cap B(x,r)$ converges to an $s$-admissible manifold near $x$ as $n\to-\infty$.  The intersection of these limiting manifolds is the desired periodic point.

\section{General results on admissible manifolds}\label{sec:C1}

We begin the proofs by formulating and proving our most  general result, which is Theorem~\ref{thm:HP1}, a very broad version of the Hadamard--Perron theorem that gives detailed bounds on the dynamics of the graph transform operator (central to Hadamard's method).  This result applies even to finite sequences of $C^1$ diffeomorphisms and gives bounds on the images of admissible manifolds.

In Theorem~\ref{thm:HP3}, we use Theorem~\ref{thm:HP1} to prove the existence of local unstable manifolds (not just admissible manifolds) for a sequence of $C^1$ diffeomorphisms $\{f_n \mid n\leq 0\}$.  In particular, this implies the classical Hadamard--Perron theorems (Theorems \ref{thm:uniform} and \ref{thm:nonuniform}), which give existence of local unstable manifolds in the uniformly and non-uniformly hyperbolic settings.  As with the classical results, we also obtain the existence of local strong unstable manifolds corresponding to the directions with the fastest expansion, which are important in various settings including partial hyperbolicity and maps with dominated splittings.  Applying the same result to the inverse maps $f_n^{-1}$ gives the local stable manifolds.

\subsection{Admissible manifolds: control of the graph transform}\label{sec:HP1}

Given $\psi_n\colon E_n^u\to E_n^s$, a continuous non-decreasing function $Z_n^\psi\colon \RR^+ \to \RR^+$ with $Z_n^\psi(0)=0$ is a \emph{modulus of continuity} for $D\psi_n$ if 
\begin{equation}\label{eqn:moduluspsi}
\|D\psi_n(v_1) - D\psi_n(v_2)\| \leq Z_n^\psi(t) \text{ whenever } \|v_1 - v_2\| \leq t.
\end{equation}
Given a sequence of such functions $Z_n^\psi$, we generalise~\eqref{eqn:C} to the following collection of admissible manifolds:
\begin{equation}\label{eqn:C'}
\begin{aligned}
\CCC_n' &= \CCC_n'(r_n,\tau_n,\sigma_n,\gamma_n,Z_n^\psi) \\
&= \Big\{ \psi \colon B_n^u(r_n) \to E_n^s \mid \psi \text{ is $C^1$,}\ \|\psi(0)\| \leq \tau_n,\ \|D\psi(0)\|\leq \sigma_n,\\
&\qquad \|D\psi\| \leq \gamma_n,  \text{ and $Z_n^\psi$ is a modulus of continuity for $D\psi_n$} \Big\}.
\end{aligned}
\end{equation}
Note that setting $Z_n^\psi(t) = \kappa_n t^\alpha$ and taking $\gamma_n \geq \sigma_n + \kappa_n r_n^\alpha$ recovers the earlier definition of $\CCC_n$.



Consider a sequence of $C^1$ maps $\{f_n \mid 0\leq n<N\}$: replace \ref{C1} with
\begin{enumerate}[label=\textbf{(C\arabic{*}$^\prime$)}]
\item\label{C1'} $f_n\colon \Omega_n\to V_{n+1}$ is a $C^1$ diffeomorphism onto its image, and $f_n(0)=0$.
\end{enumerate}
Similarly, replace \ref{C3} with
\begin{enumerate}[label=\textbf{(C\arabic{*}$^\prime$)},start=3]
\item\label{C3'} The numbers $\lambda_n^u,\lambda_n^s,\theta_n$ satisfy~\eqref{eqn:C3a}--\eqref{eqn:C3c}, and $Z_n^f\colon \RR^+\to\RR^+$ is a modulus of continuity for $Df_n$.
\end{enumerate}
For brevity, we say that the maps $\{f_n\mid 0\leq n<N\}$ satisfy \Cp\ whenever they satisfy \ref{C1'}, \ref{C2}, \ref{C3'}, \ref{C4}, and \ref{C5}, and we write
\begin{equation}\label{eqn:hZ}
\hZ_n^f(t) = Z_n^f(t) (\sin \theta_{n+1})^{-1}.
\end{equation}


In order to control the behaviour of the graph transform in terms of $\lambda_n^{u,s}, \theta_n$, we introduce a number of quantities that can be made arbitrarily small by an appropriate choice of $\tau_n,r_n,\sigma_n,\gamma_n$ in the definition of $\CCC_n$.

First note that if $\psi\in\CCC_n'$ and $x\in \graph\psi$, then
\begin{equation}\label{eqn:Un}
\|x\| \leq \tau_n + r_n(1+\gamma_n).
\end{equation}
Suppose $\tau_n$, $\gamma_n, r_n$ are small enough so that
\begin{equation}\label{eqn:rnbn}
\eps_n^f := \hZ_n^f(\tau_n + r_n(1+\gamma_n)) < e^{\lambda_n^u} (1+\gamma_n)^{-1}.
\end{equation}
Define $\chi_n < \hl_n^u < \lambda_n^u$ and $\cl_n^s, \hl_n^s > \lambda_n^s$ by
\begin{alignat}{3}
\label{eqn:hlnu}
e^{\hl_n^u} &= e^{\lambda_n^u} - \eps_n^u, \qquad &
\eps_n^u &= (1+\gamma_n) \eps_n^f, \\
\label{eqn:hlns}
e^{\hl_n^s} &= e^{\lambda_n^s} +  \eps_n^s, &
\eps_n^s &= \max\{ 1+ \gamma_n^{-1}, 1+\gamma_{n+1} \}\cdot \eps_n^f, \\
\label{eqn:chin}
\chi_n &= \hl_n^u + \eps_n^\chi, &
\eps_n^\chi &= \log \max \left( \frac{1-\gamma_{n+1}}{1+\gamma_n},\, \frac{\sin \theta_{n+1}}{1+\gamma_n} \right), \\
\label{eqn:clns}
e^{\cl_n^s} &= e^{\lambda_n^s} + \check\eps_n, &
\check\eps_n &= (1+e^{\lambda_n^s - \hl_n^u}\gamma_n) \eps_n^f + (1+\gamma_n)e^{-\hl_n^u}(\eps_n^f)^2.
\end{alignat}
Let
\begin{equation}\label{eqn:rhon}
\rho_n(t) = e^{-\hl_n^u}(1 + e^{\lambda_n^s - \hl_n^u} \gamma_n) \hZ_n^f(t) + e^{-2\hl_n^u} \hZ_n^f(t)^2
\end{equation}
and suppose that the moduli of continuity $Z_n^\psi$ satisfy
\begin{equation}\label{eqn:kn}
Z_{n+1}^\psi(t e^{\hl_n^u}) \geq
e^{\lambda_n^s - \hl_n^u} Z_n^\psi(t) + \rho_n(t).
\end{equation}
Finally, write
\[
\eps_n^\sigma = e^{\lambda_n^s - \hl_n^u} Z_n^\psi\left(e^{-\hl_n^u}\eps_n^f\tau_n\right) + e^{-\hl_n^u}(1+\gamma_n)\hZ_n^f\left((1+e^{-\hl_n^u}\eps_n^f(1+\gamma_n))\tau_n\right)
\]
and note that $\eps_n^\sigma=0$ if $\tau_n=0$, that is, if we consider admissible manifolds passing \emph{through} $0$, not just near it.  We will require the following recursive bounds on the parameters:
\begin{align}
\label{eqn:rn}
r_{n+1} &\leq e^{\hl_n^u} r_n - \eps_n^f \tau_n, \\
\label{eqn:taun}
\tau_{n+1} &\geq 
e^{\cl_n^s} \tau_n, \\
\label{eqn:thetan}
\sigma_{n+1} &\geq e^{\lambda_n^s - \hl_n^u} \sigma_n + \eps_n^\sigma, \\
\label{eqn:gn}
\gamma_{n+1} &\geq \min \left(e^{\hl_n^s - \hl_n^u} \gamma_n,\, \sigma_{n+1} + Z_{n+1}^\psi(r_{n+1}) \right).
\end{align}


\begin{theorem}\label{thm:HP1}
If the sequence of maps $\{f_n\mid 0\leq n<N\}$ satisfies \Cp\ and \eqref{eqn:rnbn}--\eqref{eqn:gn} hold, then the following are true.
\begin{thmenum}
\item\label{1gt}
The graph transform $G_n\colon \CCC_n'\to\CCC_{n+1}'$ is well-defined for every $0\leq n<N$.
\item\label{1dyn}
Given $\psi_0\in \CCC_0'$, the $C^1$ functions $\psi_n = \GGG_n\psi_0\colon B_n^u(r_n)\to E_n^s$ have the following property: if $x,y\in (\graph\psi_m) \cap \Omega_m^n$ for some $0\leq m\leq n$, then
\begin{equation}\label{eqn:dFn}
\|F_{m,n}(x) - F_{m,n}(y)\| \geq \ex{\chi_m + \cdots + \chi_{n-1}} \|x - y\|.
\end{equation}
\item \label{1nearby}
Fix $(v_0,w_0)\in \Omega_0^n$ and let $(v_n,w_n) = F_{0,n}(v_0,w_0)$.  Then
\begin{equation}\label{eqn:attraction}
\|w_n - \psi_n(v_n)\| \leq e^{\sum_{k=0}^{n-1}\cl_k^s} \|w_0 - \psi_0(v_0)\|,
\end{equation}
and if $(v_n',w_n')$ is another trajectory such that
\[
\|w_0' - w_0\| \leq \gamma_0 \|v_0' - v_0\|,
\]
then
\begin{equation}\label{eqn:incone}
\|w_n' - w_n\| \leq \gamma_n \|v_n' - v_n\|
\end{equation}
for all $0\leq n<N$.  Moreover, we have
\begin{equation}
\label{eqn:vnvn}
\|v_n - v_n'\| \geq e^{\sum_{k=0}^{n-1}\hl_k^u} \|v_0 - v_0'\|.
\end{equation}
\item\label{1contract}
Define $r_n^{(k)}$ for $0\leq k\leq n$ by $r_n^{(n)} = r_n$ and $r_n^{(k)} = e^{-\hl_k^u} (r_n^{(k+1)} + \eps_{k}^f \tau_{k})$.  Then
given $\psi_0,\ph_0\in \CCC_0'$ and writing $\hat r_n = r_n^{(0)}$, we have,
\begin{equation}\label{eqn:contraction}
\|\psi_n - \ph_n\|_{C^0} \leq \ex{\cl_0^s + \cl_1^s + \cdots + \cl_{n-1}^s} \left\|(\psi_0 - \ph_0)|_{B_0^u(\hat{r}_n)}\right\|_{C^0}.
\end{equation}
\end{thmenum}
\end{theorem}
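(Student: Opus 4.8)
The plan is to prove the four parts more or less in order, building up the properties of the graph transform one iteration at a time and then iterating. For part~\ref{1gt}, the key is a single-step analysis: given $\psi\in\CCC_n'$, I would write down the image $f_n(\graph\psi)$ in the $E_{n+1}^u\oplus E_{n+1}^s$ coordinates and show it is (locally) again a graph of a $C^1$ function $\bar\psi$ over $B_{n+1}^u(r_{n+1})$. Concretely, for $v\in B_n^u(r_n)$ let $(A(v),B(v)) = f_n(v,\psi(v))\in E_{n+1}^u\oplus E_{n+1}^s$. Using \ref{C3'} together with the smallness conditions~\eqref{eqn:rnbn}, the map $v\mapsto A(v)$ is a $C^1$ diffeomorphism onto a set containing $B_{n+1}^u(r_{n+1})$ — this is where the definition of $\hl_n^u$ in~\eqref{eqn:hlnu} and the recursion~\eqref{eqn:rn} for $r_n$ are used (the $-\eps_n^f\tau_n$ correction accounts for the vertical displacement $\psi(0)$ of the graph). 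Then $\bar\psi := B\circ A^{-1}$ is the new function; I must verify it lies in $\CCC_{n+1}'$, i.e. check the four defining conditions: the bound $\|\bar\psi(0)\|\leq\tau_{n+1}$ follows from $e^{\lambda_n^s}$-contraction in the stable direction plus a nonlinear error, giving the rate $\cl_n^s$ from~\eqref{eqn:clns} and recursion~\eqref{eqn:taun}; the bound $\|D\bar\psi(0)\|\leq\sigma_{n+1}$ uses the chain rule, the invariance of the splitting under $Df_n(0)$, and the modulus-of-continuity control on $Df_n$ away from $0$, yielding~\eqref{eqn:thetan}; the bound $\|D\bar\psi\|\leq\gamma_{n+1}$ is the cone-invariance estimate giving the $e^{\hl_n^s-\hl_n^u}$ contraction of the slope (the first alternative in~\eqref{eqn:gn}), with the second alternative $\sigma_{n+1}+Z_{n+1}^\psi(r_{n+1})$ coming from integrating $D\bar\psi$ from $0$; and the modulus-of-continuity condition for $D\bar\psi$ is exactly what~\eqref{eqn:kn} is designed to make work, with $\rho_n$ from~\eqref{eqn:rhon} collecting the nonlinear contributions.

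For part~\ref{1dyn}, I would prove the one-step expansion estimate $\|f_n(x)-f_n(y)\|\geq e^{\chi_n}\|x-y\|$ for $x,y\in\graph\psi$, $\psi\in\CCC_n'$, and then chain it over $m\leq k<n$. The one-step bound splits into: horizontal expansion by at least $e^{\hl_n^u}$ (from the analysis of $A$ above, now applied to the difference of two points on the graph rather than just to invertibility), and then controlling the full norm versus the horizontal component — a point on an admissible manifold has $\|x\|\asymp\|x_u\|$ up to factors $(1\pm\gamma_n)$, and the image lies in a cone of slope $\leq\gamma_{n+1}$, so the norm of the image is at least $(1-\gamma_{n+1})$ times its horizontal component, which is at least $e^{\hl_n^u}$ times $\|x_u-y_u\|\geq (1+\gamma_n)^{-1}\|x-y\|$. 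Taking logs and using~\eqref{eqn:chin} gives the factor $e^{\chi_n}$ (including the $\sin\theta_{n+1}$ alternative, which handles the case where the angle is the binding constraint). Part~\ref{1nearby} is similar but tracks a nearby (not necessarily on-graph) trajectory: the estimate~\eqref{eqn:attraction} is proved by showing the vertical distance $\|w_k-\psi_k(v_k)\|$ contracts by $e^{\cl_k^s}$ each step — this uses that the graph is invariant and that points off the graph move toward it, with the nonlinear error absorbed into $\cl_k^s$; the cone-invariance~\eqref{eqn:incone} is a direct one-step push-forward of the cone condition; and~\eqref{eqn:vnvn} follows from the same horizontal-expansion estimate applied along the trajectory, given that~\eqref{eqn:incone} keeps the second trajectory in the admissible cone.

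For part~\ref{1contract}, I would use the ``finite-dependence'' structure: $\GGG_n\psi_0$ restricted near a point depends on $\psi_0$ only on a ball of radius $\hat r_n$, because the preimage under $F_{0,n}$ of $B_n^u(r_n)$, tracked through the graph, lands inside $B_0^u(\hat r_n)$ — this is exactly the recursive definition $r_n^{(k)} = e^{-\hl_k^u}(r_n^{(k+1)}+\eps_k^f\tau_k)$ run backwards from $r_n^{(n)}=r_n$. Then to compare $\psi_n$ and $\ph_n$: take $v_n\in B_n^u(r_n)$, pull back along $\graph\psi_0$ to get $v_0$ with $F_{0,n}(v_0,\psi_0(v_0))=(v_n,\psi_n(v_n))$, and note $(v_0,\ph_0(v_0))$ is a trajectory whose vertical distance from $\graph\psi_0$ at time $0$ is $\leq\|(\psi_0-\ph_0)|_{B_0^u(\hat r_n)}\|_{C^0}$; applying~\eqref{eqn:attraction} from part~\ref{1nearby} to this trajectory gives that its vertical distance from $\graph\psi_n$ at time $n$ is at most $e^{\sum\cl_k^s}$ times that, and this vertical distance is comparable to (in fact bounds, up to the slope correction which one checks is harmless for the $C^0$ norm) $\|\psi_n-\ph_n\|_{C^0}$. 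Care is needed because $v_0$ depends on which of $\psi_0,\ph_0$ we pull back along; I would fix the pull-back to be along $\psi_0$, evaluate $\ph_n$ at the resulting $v_n$, and use that the horizontal coordinate is determined by the $\psi$-trajectory, so $\ph_n(v_n)$ is genuinely being compared at the right point.

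\textbf{Main obstacle.} The heart of the difficulty is part~\ref{1gt}, specifically verifying that the modulus-of-continuity condition propagates, i.e. that~\eqref{eqn:kn} really is sufficient for $Z_{n+1}^\psi$ to be a modulus of continuity for $D\bar\psi = D(B\circ A^{-1})$. This requires differentiating the composition $B\circ A^{-1}$, expressing $D\bar\psi(v_1)-D\bar\psi(v_2)$ in terms of $D\psi$, $Df_n$, and their moduli of continuity at the relevant points, and then carefully bounding each term — the quadratic terms $\hZ_n^f(t)^2$ in $\rho_n$ come from the variation of $(DA)^{-1}$, and getting the constants to close requires exactly the definitions~\eqref{eqn:hlnu}--\eqref{eqn:clns} of the adjusted rates. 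This is the one genuinely delicate calculation; everything else is either a routine one-step estimate or a straightforward iteration of such estimates. I would isolate it as a lemma about the graph transform of a single $C^1$ map and prove Theorem~\ref{thm:HP1} by iterating that lemma.
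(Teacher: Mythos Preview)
Your plan is correct and matches the paper's proof essentially step for step: the paper likewise proves part~\ref{1gt} by writing $f_n(v,\psi(v)) = (Av + g(v), B\psi(v) + h(v))$, showing $v\mapsto\bar v$ expands by $e^{\hl_n^u}$ and covers $B_{n+1}^u(r_{n+1})$, then verifying the $\tau,\sigma,\gamma,Z^\psi$ bounds in turn; proves part~\ref{1dyn} via the one-step estimates $\|f_n(x)-f_n(y)\|\geq\frac{1-\gamma_{n+1}}{1+\gamma_n}e^{\hl_n^u}\|x-y\|$ and the $\sin\theta_{n+1}$ alternative; and derives part~\ref{1contract} directly from part~\ref{1nearby} exactly as you outline. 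You have also correctly identified the modulus-of-continuity propagation (the computation of $\|D\bar\psi(\bar v_1)-D\bar\psi(\bar v_2)\|$ via $\hat A(v_1)^{-1}-\hat A(v_2)^{-1}$) as the one genuinely delicate calculation.
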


\begin{remark}
Theorem~\ref{thm:HP1} is valid even without any assumptions on the existence of genuine contraction or expansion in $E_n^s$ and $E_n^u$, or any domination.  It gives information on admissible manifolds based on information from the tangent space, without any requirement of uniform or non-uniform hyperbolicity.  
\end{remark}

\subsection{Preliminaries for the proof}\label{sec:C1setup}

As usual, we use coordinates on $\Omega_n \subset \RR^d$ given by the decomposition $\RR^d = E_n^u \oplus E_n^s$.  Thus given $v\in E_n^u$ and $w\in E_n^s$, we write $(v,w) = v+w \in \RR^d$.  We let $\di_1$ denote the partial derivative with respect to $v$, and $\di_2$ the partial derivative with respect to $w$.

Consider the error function $s_n\colon \Omega_n \to \RR^d$ given by $s_n = f_n - Df_n(0)$; then $\di_i s_n$ has $Z_n^f$ as a modulus of continuity. 
Writing $A_n = Df(0)|_{E_n^u}$ and $B_n = Df(0)|_{E_n^s}$, we see that $Df_n(0)$ takes the diagonal form
\[
(v,w) \mapsto (A_n v, B_n w).
\]
Similarly, we write
\begin{equation}\label{eqn:rngnhn}
s_n(v,w) = (g_n(v,w), h_n(v,w)).
\end{equation}
We want to use $Z_n^f$ to describe a modulus of continuity for $Dg_n$ and $Dh_n$; 
here the angle $\theta_n$ between $E_n^u$ and $E_n^s$ becomes important.  Indeed, it is easy to see that if $a,b,c$ are sides of a triangle and $\theta$ is the angle between $a$ and $b$, then $c \geq a\sin \theta$ (and also $c \geq b\sin \theta$).  Given $x,y\in \RR^d$, we apply this with $a=\di_i g_n(x) - \di_i g_n(y)$, $b=\di_i h_n(x) - \di_i h_n(y)$, $c=\di_i s_n(x) - \di_i s_n(y)$, and $\theta = \theta_{n+1}$ to obtain
\begin{multline*}
\|\di_i g_n(x) - \di_i g_n(y)\| \leq \|\di_i s_n(x) - \di_i s_n(y)\| (\sin \theta_{n+1})^{-1} \\
\leq Z_n^f(\|x-y\|)(\sin\theta_{n+1})^{-1} = \hZ_n^f(\|x-y\|),
\end{multline*}
and similarly for $\di_i h_n$ (see~\eqref{eqn:hZ} for the last step).  This shows that $\hZ_n^f$ is a modulus of continuity for both $\di_i g_n$ and $\di_i h_n$.  In particular, we see from~\eqref{eqn:Un} and~\eqref{eqn:rnbn} that both $g_n$ and $h_n$ are Lipschitz with constant $\eps_n^f$, so that
\begin{equation}\label{eqn:ghLip}
\begin{aligned}
\|g_n(x) - g_n(y)\| &\leq \eps_n^f \|x - y\|, \qquad\qquad \|g_n(x)\| \leq \eps_n^f \|x\|, \\
\|h_n(x) - h_n(y)\| &\leq \eps_n^f \|x - y\|, \qquad\qquad \|h_n(x)\| \leq \eps_n^f \|x\|
\end{aligned}
\end{equation}
for every $x,y\in \Omega_n$, where the second inequality on both lines uses the fact that $g_n(0)=h_n(0)=0$.



\subsection{Defining the graph transform}

Given $\psi_n \in \CCC_n'$, we use the coordinates provided by $E_{n+1}^u$ and $E_{n+1}^s$ to investigate the manifold $f_n(\graph \psi_n)$.  Our initial goal is to show that $f_n(\graph \psi_n \cap \Omega_n^{n+1})$ is the graph of a function $\psi_{n+1}\colon B_{n+1}^u(r_{n+1}) \to E_{n+1}^s$.

To this end, to every $v\in B_n^u(r_n)$ we associate $\bar{v} \in E_{n+1}^u$ and $\bar{\psi} \in E_{n+1}^s$ such that
\begin{equation}\label{eqn:barpsiv}
(\bar{v},\bar{\psi}) = f_n(v,\psi(v)) = (A_n v + g_n(v,\psi(v)), B_n \psi(v) + h_n(v,\psi(v))).
\end{equation}
We must show that the image of the map $v\mapsto \bar{v}$ contains the set $B_{n+1}^u(r_{n+1})$ and that the inverse map $\bar{v}\mapsto v$ can be properly defined here; then we can compose this inverse map with the map $v\mapsto \bar{\psi}$ to obtain the desired map $\bar{v} \mapsto \psi_{n+1}(\bar{v}) = \bar{\psi}(v(\bar{v}))$.  Then we will show that the new map has $\|\bar{\psi}(0)\| \leq \tau_{n+1}$.

Finally, after computing $\frac{\di\bar{v}}{\di v}$ and $\frac{\di\bar{\psi}}{\di v}$, we must use these to show that $\|D\psi_{n+1}(0)\| \leq \sigma_{n+1}$, that $\|D\psi_{n+1}\| \leq \gamma_{n+1}$, and that $Z_{n+1}^\psi$ is a modulus of continuity for $D\psi_{n+1}$.


From now on, to simplify notation, we write $g_n(v) = g_n(v,\psi_n(v))$ and $h_n(v) = h_n(v,\psi_n(v))$.  We also drop the explicit dependence on $n$ for the maps $A,B,g,h,\psi$, whenever it does not cause confusion.  (We will retain the subscript for the various parameters.) 
Then~\eqref{eqn:barpsiv} may be rewritten as the following pair of equations:
\begin{align}
\label{eqn:imp1}
\bar{v} &= A v + g(v), \\
\label{eqn:imp2}
\bar{\psi} &= B \psi(v) + h(v).
\end{align}
Using the fact that $\hZ_n^f$ is a modulus of continuity for $\di_i g_n$, together with the estimates $\|D\psi(v)\| \leq \gamma_n$ and $\|\psi(v)\| \leq \tau_n + \gamma_n \|v\|$, we see that
\begin{equation}\label{eqn:gerror1}
\begin{aligned}
\|D g(v)\| &= \|\di_1 g_n(v,\psi(v)) + \di_2 g_n(v,\psi(v)) \circ D \psi(v)\| \\
&\leq  (1+\gamma_n) \hZ_n^f(\|(v,\psi(v))\|) \\
&\leq (1+\gamma_n) \hZ_n^f(\tau_n + (1+\gamma_n)r_n) = (1+\gamma_n) \eps_n^f,
\end{aligned}
\end{equation}
and similarly,
\begin{equation}\label{eqn:herror1}
\|D h(v)\| \leq (1+\gamma_n) \eps_n^f.
\end{equation}
In particular, we see from~\eqref{eqn:hlnu} that 
\begin{equation}\label{eqn:barvexpands}
\|(A + Dg(v))^{-1}\|^{-1} \geq e^{\lambda_n^u} - (1+\gamma_n)\eps_n^f = e^{\hl_n^u}.
\end{equation}
If follows that given $v_1,v_2 \in B_n^u(r_n)$, we have
\begin{equation}\label{eqn:barv2}
\|\bar{v}_1 - \bar{v}_2\| \geq e^{\hl_n^u} \|v_1 - v_2\|. 
\end{equation}
In particular, the map $v\mapsto \bar{v}$ is one-to-one on $B_n^u(r_n)$.  Using~\eqref{eqn:imp1} and~\eqref{eqn:ghLip}, we have $\|\bar{v}\| = \|g_n(0,\psi(v))\| \leq \tau_n \eps_n^f$ when $v=0$, and it follows from~\eqref{eqn:rn} and~\eqref{eqn:barvexpands} that the image of $B_n^u(r_n)$ under the map $v\mapsto \bar{v}$ contains $B_{n+1}^u(r_{n+1})$.  In particular,~\eqref{eqn:imp1} and~\eqref{eqn:imp2} determine a well-defined function $\bar{\psi}\colon B_{n+1}^u(r_{n+1}) \to E_{n+1}^s$.



To compute $\bar\psi(0)$, we let $v_1=0$ and take $v_2$ to be such that $\bar{v}_2 = 0$.  Then~\eqref{eqn:ghLip} gives $\|\bar{v}_1\| \leq \eps_n^f \tau_n$, whence we use~\eqref{eqn:barv2} to deduce that
\begin{equation}\label{eqn:v2}
\|v_2\| \leq e^{-\hl_n^u} \|\bar{v}_1\| \leq e^{-\hl_n^u} \eps_n^f \tau_n.
\end{equation}
Together with~\eqref{eqn:imp2} and~\eqref{eqn:taun}, this implies that
\begin{align*}
\|\bar\psi(0)\| &\leq e^{\lambda_n^s} \|\psi(v_2)\| + \|h(v_2)\| \\
&\leq e^{\lambda_n^s} (\tau_n + \gamma_n \|v_2\|) + \eps_n^f \|v_2\| \\
&\leq \left( e^{\lambda_n^s} + \gamma_n \eps_n^f e^{\lambda_n^s - \hl_n^u} + (\eps_n^f)^2 e^{-\hl_n^u} \right) \tau_n \leq \tau_{n+1}.
\end{align*}

\subsection{Regularity properties of $\psi_{n+1}$}

We now estimate the regularity properties of the map $\bar{\psi}$.  Differentiating~\eqref{eqn:imp1} and~\eqref{eqn:imp2} gives
\begin{align*}
\frac{d\bar{v}}{d v} &= A + D g(v), \\
\frac{d\bar{\psi}}{d v} &= B\circ D\psi(v) + Dh(v).
\end{align*}
Write $\hA(v) = \frac{d\bar{v}}{d v} = A + Dg(v)$; we saw in~\eqref{eqn:barvexpands} that $\|\hA(v)^{-1}\|^{-1} \geq e^{\hl_n^u}$ for every $v\in B_n^u(r_n)$.  Now using the chain rule, we conclude that
\begin{equation}\label{eqn:barpsi'}
\begin{aligned}
D\bar{\psi}(\bar{v}) &= (B\circ D\psi(v) + Dh(v)) \circ (A+Dg(v))^{-1}, \\
&= (B\circ D\psi(v) + Dh(v)) \circ \hA(v)^{-1}.
\end{aligned}
\end{equation}
Recalling that $\log \|B\| \leq \lambda_n^s$ and $\|D\psi(v)\| \leq \gamma_n$, we let $v$ be such that $\bar{v}=0$, and use~\eqref{eqn:v2},~\eqref{eqn:gerror1}, and~\eqref{eqn:barpsi'} to estimate $\|D\bar\psi(0)\|$:
\begin{align*}
\|D\bar\psi(0)\| &\leq \|\hat{A}(v)^{-1}\|(\|B\| \|D\psi(v)\| + \|Dh(v)\|) \\
&\leq e^{-\hl_n^u} \bigg( e^{\lambda_n^s} \Big(\sigma_n + Z_n^\psi\big(e^{\hl_n^u} \eps_n^f \tau_n\big)\Big) \\
&\qquad\qquad + (1+\gamma_n) \hZ_n^f\big(\tau_n + (1+\gamma_n) e^{-\hl_n^u} \eps_n^f \tau_n\big) \bigg).
\end{align*}
Recalling the definition of $\eps_n^\sigma$ before Theorem~\ref{thm:HP1}, this shows that $\|D\bar\psi(0)\|\leq \sigma_{n+1}$ as long as $\sigma_{n+1}$ satisfies~\eqref{eqn:thetan}.

Now we use~\eqref{eqn:herror1}, \eqref{eqn:barvexpands}, and~\eqref{eqn:barpsi'} to estimate $\|D\bar\psi\|$, requiring only that $\|v\|\leq r_n$:
\[
\|D\bar\psi(\bar v)\| \leq \big(e^{\lambda_n^s} \gamma_n + (1+\gamma_n)\eps_n^f\big) e^{-\hl_n^u} 
\leq e^{\hl_n^s - \hl_n^u} \gamma_n,
\]
where the last step uses~\eqref{eqn:hlns}.

Observe that~\eqref{eqn:gn} may be satisfied in one of two ways: either we have $\gamma_{n+1} \geq e^{\hl_n^s - \hl_n^u} \gamma_n$, or we have $\gamma_{n+1} \geq \sigma_{n+1} + Z_{n+1}^\psi(r_{n+1})$.  In the first case, the inequality $\|D\psi_{n+1}\| \leq \gamma_{n+1}$ follows from the argument above.  In the second case, this inequality follows from the fact that $Z_{n+1}^\psi$ is a modulus of continuity for $D\psi_{n+1}$, which we now prove.

\begin{remark}
We will need to use the second case in the proof of Theorem~\ref{thm:parameters}.
\end{remark}

To show that $Z_{n+1}^\psi$ is a modulus of continuity for $D\psi_{n+1}$, we must estimate the quantities $\|D\bar{\psi}(\bar{v}_1) - D\bar{\psi}(\bar{v}_2)\|$ and $\|\bar{v}_1 - \bar{v}_2\|$.  First we observe that
\begin{multline}\label{eqn:Dbp}
D\bar{\psi}(\bar{v}_1) - D\bar{\psi}(\bar{v}_2) = 
(B\circ D\psi(v_1) + Dh(v_1)) \circ \hA(v_1)^{-1} \\
- (B\circ D\psi(v_2) + Dh(v_2)) \circ \hA(v_2)^{-1}.
\end{multline}
Furthermore, it follows from the definition of $\hA(v)$ that
\[
\hA(v_1) = \hA(v_2) + Dg(v_1) - Dg(v_2);
\]
composing on the left by $\hA(v_2)^{-1}$ and on the right by $\hA(v_1)^{-1}$ yields
\[
\hA(v_2)^{-1} = \hA(v_1)^{-1} + \hA(v_2)^{-1} \circ (Dg(v_1) - Dg(v_2)) \circ \hA(v_1)^{-1}.
\]
Using this in~\eqref{eqn:Dbp} gives
\begin{multline*}
D\bar{\psi}(\bar{v}_1) - D\bar{\psi}(\bar{v}_2)
= \big(B \circ(D\psi(v_1) - D\psi(v_2) ) + (Dh(v_1)) - Dh(v_2)) \\
+ (B\circ D\psi(v_2) + Dh(v_2)) \circ \hA(v_2)^{-1} \circ (Dg(v_1) - Dg(v_2))\big) \circ \hA(v_1)^{-1}.
\end{multline*}
Writing $t=\|v_1 - v_2\|$, this leads to the following estimate:
\begin{equation}\label{eqn:deltaDbar}
\begin{aligned}
\|D\bar{\psi}(\bar{v}_1) &- D\bar{\psi}(\bar{v}_2)\|
\leq \big(\|B\| Z_n^\psi(t) + \hZ_n^f(t) \\
&\qquad + (\|B\| \|D\psi\|  + \|Dh\| ) \cdot \|\hA(v_1)^{-1}\| \cdot \hZ_n^f(t) \big) \|\hA(v_2)^{-1} \| \\
&\leq \big(e^{\lambda_n^s} Z_n^\psi(t) + \hZ_n^f(t)
+ (e^{\lambda_n^s} \gamma_n  + \hZ_n^f(t) ) e^{-\hl_n^u} \hZ_n^f(t) \big) e^{-\hl_n^u}.
\end{aligned}
\end{equation}
Now~\eqref{eqn:barvexpands},~\eqref{eqn:deltaDbar}, and~\eqref{eqn:kn} show that $Z_{n+1}^\psi$ is a modulus of continuity for $D\psi_{n+1}$.

It follows from the definition of $\psi$ that $\graph\psi_{n+1} = f_n(\graph\psi_n \cap \Omega_n^{n+1})$, and thus induction shows that $\graph\psi_n = F_{0,n}(\graph\psi_0 \cap \Omega_0^n)$ for all $n$, which completes the proof of~\ref{1gt}

\subsection{Dynamics of $f_n\colon \graph \psi_n \to \graph \psi_{n+1}$}

To prove~\ref{1dyn}, we must establish~\eqref{eqn:dFn} by estimating the expansion of the map $f_n$ from $\graph\psi_n$ to $\graph\psi_{n+1}$.  In particular, we must show that given $x,y\in \graph(\psi_n) \cap f_n^{-1} \graph(\psi_{n+1})$, we have
\begin{equation}\label{eqn:expand0}
\|f_n(x) - f_n(y)\| \geq e^{\chi_n} \|x-y\|.
\end{equation}
Using the definition of $\chi_n$ in~\eqref{eqn:chin}, this is equivalent to proving both of the following inequalities:
\begin{align}
\label{eqn:expand1}
\|f_n(x) - f_n(y)\| &\geq \frac{1-\gamma_{n+1}}{1+\gamma_n} e^{\hl_n^u} \|x-y\|, \\
\label{eqn:expand2}
\|f_n(x) - f_n(y)\| &\geq \frac{\sin\theta_{n+1}}{1+\gamma_n} e^{\hl_n^u} \|x-y\|.
\end{align}

Now suppose $v_1,v_2\in B_n^u(r_n)$ are such that $\bar v_1, \bar v_2$ lie in $B_{n+1}^u(r_{n+1})$.  To prove~\eqref{eqn:expand1}, we use the estimate
\[
\|\bar\psi(\bar v_1) - \bar\psi(\bar v_2)\| \leq \gamma_{n+1} \|\bar v_1 - \bar v_2\|
\]
and observe that
\begin{align*}
\|(\bar v_1,\bar{\psi}(\bar v_1)) - (\bar v_2,\bar{\psi}(\bar v_2))\| 
&= \|(\bar v_1 - \bar{v_2},\bar{\psi}(\bar v_1) -\bar{\psi}(\bar v_2))\|  \\
&\geq (1-\gamma_{n+1}) \|\bar v_1 - \bar v_2\| \\
&\geq (1-\gamma_{n+1}) e^{\hl_n^u} \|v_1 - v_2\| \\
&\geq \frac{1-\gamma_{n+1}}{1+\gamma_n} e^{\hl_n^u} \|(v_1,\psi(v_1)) - (v_2, \psi(v_2))\|.
\end{align*}
For~\eqref{eqn:expand2}, we use the triangle estimates discussed following~\eqref{eqn:rngnhn} and obtain
\begin{align*}
\|(\bar v_1,\bar{\psi}(\bar v_1)) - (\bar v_2,\bar{\psi}(\bar v_2))\| 
&= \|(\bar v_1 - \bar v_2,\bar{\psi}(\bar v_1)) -\bar{\psi}(\bar v_2))\|  \\
&\geq \sin \theta_{n+1} \|\bar v_1 - \bar v_2\| \\
&\geq \sin \theta_{n+1} e^{\hl_n^u} \|v_1 - v_2\| \\
&\geq \frac{\sin \theta_{n+1}}{1+\gamma_n} e^{\hl_n^u} \|(v_1,\psi(v_1)) - (v_2, \psi(v_2))\|.
\end{align*}
Together these establish~\eqref{eqn:expand0}, and~\eqref{eqn:dFn} follows by induction, completing the proof of~\ref{1dyn}

\subsection{Contraction properties of the graph transform}\label{sec:contraction}

First we observe that part \ref{1contract} of the theorem follows from part \ref{1nearby}  Indeed, it follows from~\eqref{eqn:barvexpands} and the remarks after~\eqref{eqn:barv2} that to compare $\ph_n,\psi_n$ on $B^u(r_n)$, it suffices to compare $\ph_0,\psi_0$ on $B^u(\hat r_n)$, and then~\eqref{eqn:attraction} establishes the rest of \ref{1contract}

For part \ref{1nearby}, we see that~\eqref{eqn:vnvn} comes from exacly the same argument as~\eqref{eqn:barv2}, where we need only replace the function $\psi$ from that argument by another function in $\CCC_n'$ whose graph contains both $(v_n,w_n)$ and $(v_n',w_n')$ -- this is possible by~\eqref{eqn:incone}.

Thus it only remains to prove~\eqref{eqn:attraction}.  To this end, write $(v_1,w_1) = (v_n,w_n)$ and $(\hat v_1, \hat w_1) = (v_{n+1}, w_{n+1})$.  Let $(v_1,\psi_1)$ be the point on $\graph\psi$ with the same $u$-coordinate as $(v_1,w_1)$, and let $(\bar v_2, \bar \psi_2)$ be the point on $\graph\bar\psi$ with the same $u$-coordinate as $(\hat v_1,\hat w_1)$, so that $\bar v_2 = \hat v_1$.  Let $(v_2,\psi_2) = f_n^{-1}(\bar v_2, \bar\psi_2)$.

Now we have
\begin{equation}\label{eqn:vsws}
\begin{alignedat}{3}
\bar v_1 &= A_n v_1 + g_n(v_1,\psi_1),\qquad & \bar\psi_1 &= B_n\psi_1 + h_n(v_1,\psi_1),\\
\bar v_2 &= A_n v_2 + g_n(v_2,\psi_2),\qquad & \bar\psi_2 &= B_n\psi_2 + h_n(v_2,\psi_2),\\
\hat v_1 &= A_n v_1 + g_n(v_1,w_1), & \hat w_1 &= B_n w_1 + h_n(v_1,w_1).
\end{alignedat}
\end{equation}
We must estimate $\|\hat w_1 - \bar\psi_2\|$ in terms of $\|w_1 - \psi_1\|$.  Using~\eqref{eqn:ghLip} and~\eqref{eqn:vsws}, we have
\begin{equation}\label{eqn:hatbar}
\|\hat w_1 - \bar\psi_2\| \leq e^{\lambda_n^s} \|w_1 - \psi_2\| + \eps_n^f (\|v_1 - v_2\| + \|w_1 - \psi_2\|).
\end{equation}
Furthermore, we have $\|w_1 - \psi_2\| \leq \|w_1 - \psi_1\| + \|\psi_1 - \psi_2\|$, and we can use~\eqref{eqn:barv2}, \eqref{eqn:ghLip}, and \eqref{eqn:vsws} to obtain
\[
\|v_1 - v_2\| \leq e^{-\hl_n^u} \|\bar v_1 - \hat v_1\| \leq e^{-\hl_n^u} \eps_n^f \|w_1 - \psi_1\|.
\]
Together with~\eqref{eqn:hatbar} and the hypothesis on $\|D\psi_n\|$, this yields
\begin{align*}
\|\hat w_1 - \bar\psi_2\| &\leq
(e^{\lambda_n^s} + \eps_n^f) (\|w_1 - \psi_1\| + \|\psi_1 - \psi_2\|) + \eps_n^f \|v_1 - v_2\| \\
&\leq (e^{\lambda_n^s} + \eps_n^f)(1 + \gamma_n e^{-\hl_n^u} \eps_n^f) \|w_1 - \psi_1\| + e^{-\hl_n^u} (\eps_n^f)^2 \|w_1 - \psi_1\| \\
&= \left( e^{\lambda_n^s} + (1 + \gamma_n e^{\lambda_n^s - \hl_n^u}) \eps_n^f + (1+\gamma_n) e^{-\hl_n^u} (\eps_n^f)^2\right) \|w_1 - \psi_1\| \\
&= e^{\cl_n^s} \|w_1 - \psi_1\|,
\end{align*}
where the last equality uses the definition in~\eqref{eqn:clns}.  This completes the proof of \ref{1nearby}

\section{General results on unstable manifolds}\label{sec:C1unstable}

Now we consider a sequence $\{f_n \mid n\leq 0\}$ of $C^1$ maps and produce unstable manifolds by applying Theorem~\ref{thm:HP1} to the finite sequences $\{f_k \mid n\leq k<0\}$.\footnote{We could consider $\{f_n \mid n\geq 0\}$ and obtain results on stable manifolds instead of unstable manifolds, but the notation and bounds laid out in Section~\ref{sec:HP1} are more suited to describing unstable manifolds for a sequence $\{f_n \mid n\leq 0\}$.}

The theorem below relies on having a sequence $Z_n^\psi$ of moduli of continuity satisfying~\eqref{eqn:kn}: for now we assume that such a sequence has already been found, and in Proposition~\ref{prop:checkmoduli} below we give conditions on $\hat Z_n^f, \hl_n^u, \hl_n^s,\gamma_n$ that guarantee the existence of such $Z_n^\psi$.


\begin{theorem}\label{thm:HP3}
Let $\{f_n\mid n\leq 0\}$ satisfy \Cp\ and suppose $r_n,\gamma_n,Z_n^\psi$ are such that~\eqref{eqn:rnbn}--\eqref{eqn:gn} hold with $\sigma_n=\tau_n=0$.  Then the following are true.
\begin{thmenum}
\item\label{inv}
Writing $\CCC_n' = \CCC_n'(r_n,0,0,\gamma_n, Z_n^\psi)$, there exists $\psi_n\in \CCC_n'$ such that $G_n\psi_n=\psi_{n+1}$ for all $n<0$.
\item\label{inv-exp}
If $x,y\in (\graph\psi_m) \cap \Omega_m^n$ for some $n>m$, then
\begin{equation}\label{eqn:dFn2}
\|F_{m,n}(x) - F_{m,n}(y)\| \geq \ex{\chi_m + \cdots + \chi_{n-1}} \|x - y\|.
\end{equation}
\item\label{unique}
If we have
\begin{equation}\label{eqn:backwards}
\llim_{n\to -\infty} \log \gamma_n + \sum_{n\leq k < 0}  \left(\cl_k^s - \hl_k^u\right) = -\infty,
\end{equation}
then $\psi_n$ is the unique member of $\CCC_n'$ satisfying~\ref{inv}
\item\label{dyn-char}
If $x\in \Omega_n$ is such that $x_m := F_{m,n}^{-1}(x_n) \in \Omega_m$ for every $m\leq n$ and
\begin{equation}\label{eqn:backcontr}
\llim_{m\to-\infty} (1+\gamma_m) \|x_m\| \ex{-\sum_{k=m}^{n-1} \cl_k^s} = 0,
\end{equation}
then $x\in \graph\psi_n$.
\item\label{dyn-char-2}
If $\gamma_m$ is bounded above and $\lambda_k$ is a sequence such that $\lambda_k \geq \cl_k^s + t(\hl_k^u - \cl_k^s)$ for some fixed $t$ (independent of $k$), and if $x$ is such that there exists $C$ with
\begin{equation}\label{eqn:backcontrb}
\|x_m\| \leq C \ex{-\sum_{k=m}^{n-1} \lambda_k}
\end{equation}
for every $m$, then~\eqref{eqn:backcontr} holds and $x\in \graph \psi_n$.
\end{thmenum}
\end{theorem}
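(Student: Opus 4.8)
The plan is to obtain every part of the theorem from Theorem~\ref{thm:HP1} applied to the finite blocks $\{f_k\mid m\leq k<n\}$ (relabelled so that $m$ plays the role of $0$), together with one compactness argument for the existence statement. Write $\GGG_m^k=G_{k-1}\circ\cdots\circ G_m$, so that by Theorem~\ref{thm:HP1}\ref{1gt} this maps $\CCC_m'$ into $\CCC_k'$ for $m\leq k\leq 0$, and note that any sequence with $G_j\psi_j=\psi_{j+1}$ satisfies $\psi_k=\GGG_m^k\psi_m$. For \ref{inv} I would push forward the zero function: since $\sigma_n=\tau_n=0$ we have $\mathbf 0_{-N}\in\CCC_{-N}'$, so $\psi_n^{(N)}:=\GGG_{-N}^n(\mathbf 0_{-N})\in\CCC_n'$ is defined for $N\geq -n$; and because $\sigma_n=\tau_n=0$ forces every $\psi\in\CCC_n'$ to vanish with its derivative at $0$ while having $\|D\psi\|\leq\gamma_n$ with the common modulus of continuity $Z_n^\psi$, the space $\CCC_n'$ is compact in the $C^1$ topology by Arzel\`a--Ascoli, and the explicit construction in the proof of Theorem~\ref{thm:HP1} shows $\psi\mapsto G_n\psi$ is continuous there. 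A diagonal extraction over $n$ then gives $\psi_n\in\CCC_n'$ with $\psi_n^{(N_j)}\to\psi_n$, and continuity of $G_n$ promotes $G_n\psi_n^{(N_j)}=\psi_{n+1}^{(N_j)}$ to $G_n\psi_n=\psi_{n+1}$, proving \ref{inv}. Part \ref{inv-exp} is then immediate: invariance gives $\psi_k=\GGG_m^k\psi_m$, so \eqref{eqn:dFn} of Theorem~\ref{thm:HP1}\ref{1dyn}, applied to the block based at $m$ with initial function $\psi_m$, is precisely \eqref{eqn:dFn2}.

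For \ref{unique} I would take two sequences $(\psi_n),(\ph_n)$ satisfying \ref{inv}, fix $n$, and compare $\psi_n=\GGG_{-N}^n\psi_{-N}$ with $\ph_n=\GGG_{-N}^n\ph_{-N}$ via the contraction estimate \eqref{eqn:contraction} of Theorem~\ref{thm:HP1}\ref{1contract}. Its right-hand side involves $\psi_{-N}-\ph_{-N}$ on a ball of radius $\hat r=e^{-\sum_{k=-N}^{n-1}\hl_k^u}r_n$, which is at most $r_{-N}$ by iterating \eqref{eqn:rn}; since $\psi_{-N},\ph_{-N}$ vanish with their derivatives at $0$ and have $\|D\cdot\|\leq\gamma_{-N}$, that restriction has $C^0$-norm $\leq\gamma_{-N}\hat r$, so $\|\psi_n-\ph_n\|_{C^0}\leq r_n\,\gamma_{-N}\,e^{\sum_{k=-N}^{n-1}(\cl_k^s-\hl_k^u)}$. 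Peeling off the fixed finite tail $\sum_{k=n}^{-1}(\cl_k^s-\hl_k^u)$, hypothesis \eqref{eqn:backwards} sends the right-hand side to $0$ as $N\to\infty$, forcing $\psi_n\equiv\ph_n$ for every $n$.

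For \ref{dyn-char} I would write $x_m=(v_m,w_m)$ and apply the attraction estimate \eqref{eqn:attraction} of Theorem~\ref{thm:HP1}\ref{1nearby} to each block $\{f_k\mid m\leq k<n\}$, with initial function $\psi_m$ and initial point $x_m$, comparing $x_m$ with the point $(v_m,\psi_m(v_m))\in\graph\psi_m$ lying above it; this yields $\|w_n-\psi_n(v_n)\|\leq e^{\sum_{k=m}^{n-1}\cl_k^s}\|w_m-\psi_m(v_m)\|$, and since $\psi_m(0)=0$, $D\psi_m(0)=0$, $\|D\psi_m\|\leq\gamma_m$ one has $\|w_m-\psi_m(v_m)\|\lesssim(1+\gamma_m)\|x_m\|$, so the right-hand side is, up to a fixed constant, exactly the quantity whose $\llim$ is hypothesised to vanish in \eqref{eqn:backcontr}; hence $w_n=\psi_n(v_n)$, i.e.\ $x\in\graph\psi_n$. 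For \ref{dyn-char-2} I would insert \eqref{eqn:backcontrb} into \eqref{eqn:backcontr} and use $\lambda_k\geq\cl_k^s+t(\hl_k^u-\cl_k^s)$ together with $\sup_m\gamma_m<\infty$ to reduce the verification of \eqref{eqn:backcontr} to an elementary bound on $\sum_{k=m}^{n-1}(\hl_k^u-\cl_k^s)$; then \ref{dyn-char} applies and gives $x\in\graph\psi_n$.

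The step I expect to be the main obstacle is the existence in \ref{inv}: without any hyperbolicity hypothesis the graph transforms need not be contractions, so $\psi_n$ cannot be obtained as a Cauchy limit, and one must instead combine compactness of $\CCC_n'$ with continuity of the graph transform --- the latter requiring a routine but unavoidable check from the explicit formulas in the proof of Theorem~\ref{thm:HP1}. The remaining care is bookkeeping: keeping the index shifts straight when feeding finite blocks into Theorem~\ref{thm:HP1}, and carrying out the elementary estimates on the partial sums of $\cl_k^s-\hl_k^u$ appearing in \ref{unique} and \ref{dyn-char-2}.
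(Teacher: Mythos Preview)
Your proposal is correct and follows essentially the same route as the paper: push forward the zero function and extract a limit via Arzel\`a--Ascoli compactness of each $\CCC_n'$ for \ref{inv}, then read off \ref{inv-exp}--\ref{dyn-char-2} from the corresponding estimates in Theorem~\ref{thm:HP1}. The only cosmetic differences are that the paper packages the compactness step via Tychonoff on $\prod_{n<0}\CCC_n'$ rather than a diagonal extraction, and for \ref{unique} it shows the approximating sequence $(\psi_n^k)_k$ is Cauchy rather than directly comparing two arbitrary invariant families as you do---your formulation is arguably the cleaner one.
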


\begin{proof}
Theorem~\ref{thm:HP1} shows that the graph transform $G_n\colon \CCC_n'\to\CCC_{n+1}'$ is well-defined for all $n<0$.  To show existence of the family $\psi_n$, we define for each $k<0$ a family $\Psi^k = (\psi_n^k)_{n<0} \in \prod_{n<0} \CCC_n'$ by
\begin{equation}\label{eqn:Psik}
\psi_n^k = \begin{cases}
           \mathbf{0} & n\leq k, \\
           G_{n-1} \psi_{n-1}^k & n > k,
           \end{cases}
\end{equation}
where $\mathbf{0}$ is the zero function.  By the Arzela--Ascoli theorem, $\CCC_n'$ is $C^1$-compact because $\{ D\psi \mid \psi \in \CCC_n'\}$ is an equicontinuous and bounded family of functions.  
Thus by Tychonoff's theorem, $\prod_{n<0} \CCC_n'$ is compact in the product topology.  In particular, there exists $k_j\to -\infty$ such that $\Psi^{k_j} \to \Psi = (\psi_n)_{n<0} \in \prod \CCC_n'$, and this sequence $\psi_n\in \CCC_n'$ satisfies $G_n\psi_n = \psi_{n+1}$ by the second part of~\eqref{eqn:Psik}.  This proves Part \ref{inv}

Part \ref{inv-exp} follows directly from Part \ref{1dyn} of Theorem~\ref{thm:HP1}.  To prove the claim of uniqueness in Part \ref{unique}, we again consider the sequence $\Psi^k$ defined in~\eqref{eqn:Psik} and estimate $\|\psi_n^j - \psi_n^k\|$ using Part \ref{1contract} of Theorem~\ref{thm:HP1}.  Note that because $\tau_\ell=0$ for all $\ell$, we have $r_n^{(\ell)} = \exp(-\sum_{i=\ell}^{n-1} \hl_i^u)$.  Now take $m<n$ to be large negative and $j,k\geq m$; then~\eqref{eqn:contraction} gives
\begin{align*}
\|\psi_n^j - \psi_n^k\|_{C^0} &\leq \exp\left( \sum_{i=m}^{n-1} \cl_i^s \right) \left\|(\psi_m^j - \psi_m^k)|_{B_m^u(r_n^{(m)})}\right\| \\
&\leq 2\gamma_{n-m} \ex{\sum_{i=n-m}^{n-1} \cl_i^s - \hl_i^u}.
\end{align*}
Together with~\eqref{eqn:backwards}, this implies that the sequence $\{ \psi_n^{n-i} \mid i\in \NN\}$ is Cauchy in the uniform metric, and hence there exists a continuous function $\psi_n \colon B_n^u(r_n) \to E_n^s$ such that $\lim_{k\to-\infty} \psi_n^k = \psi_n$.  In particular, once~\eqref{eqn:backwards} holds there is no need to pass to a subsequence $k_j$ to obtain convergence.



To prove Part \ref{dyn-char}, we apply~\eqref{eqn:attraction} to the sequence of points $x_m$.  For every $m \leq n$, we get
\begin{equation}\label{eqn:stablepart}
\|w_n - \psi_n(v_n)\| \leq \ex{-\sum_{k=m}^{n-1} \cl_k^s} \|w_m - \psi_m(v_m) \|.
\end{equation}
Using the fact that $\|w_m - \psi_m(v_m) \| \leq (1+\gamma_m) \|x_m\|$ together with~\eqref{eqn:backcontr}, the right hand side of~\eqref{eqn:stablepart} becomes arbitrarily small as $m\to-\infty$, and it follows that $x^s = \psi_n(x^u)$, or in other words, $x\in \graph\psi_n$.

For Part \ref{dyn-char-2}, it suffices to observe that~\eqref{eqn:backwards} and~\eqref{eqn:backcontrb} imply~\eqref{eqn:backcontr} when $\gamma_m$ is bounded above.
\end{proof}

\begin{proposition}\label{prop:checkmoduli}
Given~\eqref{eqn:hlnu}--\eqref{eqn:gn}, suppose that the sum
\begin{multline}\label{eqn:Znpsisum}
Z_n^\psi(t) = \sum_{k< n}  \ex{-\hl_k^u + \sum_{k < j < n} (\lambda_j^s - \hl_j^u)} \\
\cdot \left( 1 + e^{\hl_k^s - \hl_k^u} \gamma_k\right) \hZ_k^f\left( t\ex{-\sum_{k\leq j < n} \hl_j^u} \right)
\end{multline}
converges when $n=0$ for all $t\in (0,r_0)$, and that $\lim_{t\to 0} Z_0^\psi(t) = 0$.  Then $Z_n^\psi$ is a sequence of moduli of continuity satisfying~\eqref{eqn:kn}.
\end{proposition}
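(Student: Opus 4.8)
Formula \eqref{eqn:Znpsisum} is, up to replacing $\rho_n$ by a slightly larger linear expression, the closed-form solution of the linear recursion \eqref{eqn:kn} ``run from $-\infty$''. The plan is therefore: (i) verify that each $Z_n^\psi$ defined by \eqref{eqn:Znpsisum} is a bona fide modulus of continuity; (ii) establish by direct substitution the exact identity
\begin{equation*}
Z_{n+1}^\psi\big(te^{\hl_n^u}\big) = e^{\lambda_n^s-\hl_n^u}\,Z_n^\psi(t) + e^{-\hl_n^u}\big(1 + e^{\hl_n^s-\hl_n^u}\gamma_n\big)\hZ_n^f(t);
\end{equation*}
and (iii) deduce \eqref{eqn:kn} from this identity together with \eqref{eqn:hlns} and \eqref{eqn:rnbn}.

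For (i), each summand in \eqref{eqn:Znpsisum} is a non-negative constant times $\hZ_k^f$ precomposed with the linear contraction $t\mapsto t\ex{-\sum_{k\le j<n}\hl_j^u}$; since $\hZ_k^f = Z_k^f(\sin\theta_{k+1})^{-1}$ is non-decreasing, continuous and vanishes at $0$ (by \ref{C3'} and \eqref{eqn:hZ}), so is each summand. The series converges at $n=0$ by hypothesis; for $n<0$, splitting the sum defining $Z_0^\psi$ at the index $n$ — using $\sum_{k<j<0}=\sum_{k<j<n}+\sum_{n\le j<0}$ and the analogous splitting of $\sum_{k\le j<0}\hl_j^u$ for $k<n$ — produces a scaling identity $Z_0^\psi(t)=\Lambda\,Z_n^\psi(\Theta t)+(\text{a finite sum of }\hZ_k^f\text{-terms})$ with $\Lambda,\Theta>0$ depending only on $\lambda_j^s,\hl_j^u$ for $n\le j<0$. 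As the finite remainder is finite-valued and tends to $0$ with $t$, this transfers both convergence and the property $\lim_{t\to0}Z_n^\psi(t)=0$ to every $n<0$ on the corresponding range of arguments (determined by iterating \eqref{eqn:rn}). Local uniform convergence — the tail of the series at $t$ is dominated by its tail at any fixed larger argument — then gives continuity, so each $Z_n^\psi$ is a modulus of continuity.

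For (ii), substitute $te^{\hl_n^u}$ into \eqref{eqn:Znpsisum} written for the index $n+1$. The $k=n$ term has empty inner sum $\sum_{n<j<n+1}$ and $\sum_{n\le j<n+1}\hl_j^u=\hl_n^u$, so it equals $e^{-\hl_n^u}\big(1+e^{\hl_n^s-\hl_n^u}\gamma_n\big)\hZ_n^f(t)$; in each term with $k<n$, the $j=n$ contributions to the two exponent sums produce a common factor $e^{\lambda_n^s-\hl_n^u}$ and the argument of $\hZ_k^f$ collapses to $t\ex{-\sum_{k\le j<n}\hl_j^u}$, so those terms reassemble to $e^{\lambda_n^s-\hl_n^u}Z_n^\psi(t)$. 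This gives the displayed identity. For (iii), comparing it with \eqref{eqn:kn} and the definition \eqref{eqn:rhon} of $\rho_n$, \eqref{eqn:kn} holds once $e^{-\hl_n^u}\big(1+e^{\hl_n^s-\hl_n^u}\gamma_n\big)\hZ_n^f(t)\ge\rho_n(t)$; cancelling $e^{-\hl_n^u}\hZ_n^f(t)$ (the case $\hZ_n^f(t)=0$ being trivial), this reduces to $(e^{\hl_n^s}-e^{\lambda_n^s})\gamma_n\ge\hZ_n^f(t)$. By \eqref{eqn:hlns} the left-hand side equals $\eps_n^s\gamma_n\ge(1+\gamma_n^{-1})\gamma_n\eps_n^f=(1+\gamma_n)\eps_n^f$, and $\eps_n^f=\hZ_n^f(\tau_n+r_n(1+\gamma_n))$ by \eqref{eqn:rnbn}; since $\hZ_n^f$ is non-decreasing, the inequality holds for every $t$ not exceeding the size bound $\tau_n+r_n(1+\gamma_n)$ of admissible manifolds (cf.\ \eqref{eqn:Un}), which is the range in which \eqref{eqn:kn} is invoked in the proof of Theorem~\ref{thm:HP1}.

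The main obstacle is the bookkeeping in step (ii): the two exponential prefactors in \eqref{eqn:Znpsisum} and the nested argument of $\hZ_k^f$ must be tracked precisely so that the residual $k<n$ sum matches $e^{\lambda_n^s-\hl_n^u}Z_n^\psi(t)$ term by term with no stray factors. Step (i)'s propagation of convergence from $n=0$ to general $n<0$ is the only other point requiring care, and it becomes routine once the scaling identity is in place.
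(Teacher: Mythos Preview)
Your proof is correct and follows essentially the same approach as the paper: both establish the exact identity $Z_{n+1}^\psi(te^{\hl_n^u}) = e^{\lambda_n^s-\hl_n^u}Z_n^\psi(t) + e^{-\hl_n^u}(1+e^{\hl_n^s-\hl_n^u}\gamma_n)\hZ_n^f(t)$ by direct substitution and term-by-term bookkeeping, then deduce \eqref{eqn:kn} and the modulus-of-continuity property from it. Your step (iii), which explicitly verifies the inequality $e^{-\hl_n^u}(1+e^{\hl_n^s-\hl_n^u}\gamma_n)\hZ_n^f(t)\ge\rho_n(t)$ via \eqref{eqn:hlns} and \eqref{eqn:rnbn}, is left implicit in the paper (which simply asserts ``this shows that \eqref{eqn:kn} holds''); similarly, your step (i) spells out the propagation of convergence and continuity from $n=0$ to $n<0$ that the paper compresses into ``solving for $Z_n^\psi$ shows that it is a legitimate modulus of continuity function for each $n$.''
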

\begin{proof}
It follows from~\eqref{eqn:Znpsisum} that for all $n<0$, we have
\begin{align*}
\big( e^{\lambda_n^s} &Z_n^\psi(t) + (1+e^{\hl_n^s-\hl_n^u} \gamma_n ) \hZ_n^f(t) \big) e^{-\hl_n^u} \\
&=  e^{\lambda_n^s - \hl_n^u} \sum_{k< n} e^{-\hl_k^u + \sum_{k < j < n} (\lambda_j^s - \hl_j^u)}\left( 1 + e^{\hl_k^s - \hl_k^u} \gamma_k\right)
\hZ_k^f\left( te^{-\sum_{k\leq j < n} \hl_j^u} \right) \\
&\qquad \qquad + e^{-\hl_n^u} (1+e^{\hl_n^s-\hl_n^u} \gamma_n ) \hZ_n^f(t) \\
&= \sum_{k\leq n} e^{-\hl_k^u + \sum_{k<j\leq n} (\lambda_j^s - \hl_j^u)} \left( 1 + e^{\hl_k^s - \hl_k^u} \gamma_k \right)
\hZ_k^f \left(te^{\hl_n^u} e^{-\sum_{k\leq j\leq n} \hl_j^u} \right) \\
&= Z_{n+1}^\psi(t e^{\hl_n^u}).
\end{align*}
This shows that~\eqref{eqn:kn} holds, and solving for $Z_n^\psi$ shows that it is a legitimate modulus of continuity function for each $n$.
\end{proof}

\section{Proof of results in Section~\ref{sec:C1+}}

\subsection{Proof of Theorem~\ref{thm:parameters}}\label{sec:param-pfs}

We now prove Theorem~\ref{thm:parameters} using Theorem~\ref{thm:HP1}.  We begin by estimating the quantities in~\eqref{eqn:rnbn}--\eqref{eqn:rhon} using~\eqref{eqn:rec-r}--\eqref{eqn:bd-s} and then showing that for any $\delta$ and $L$, we can choose $\xi$ and $\bar\gamma$ such that~\eqref{eqn:kn}--\eqref{eqn:gn} are satisfied.

Using~\eqref{eqn:rnbn},~\eqref{eqn:bd-t}, and~\eqref{eqn:bd-r}, we have (taking $\bar\gamma\leq 1$)
\begin{equation}\label{eqn:enf}
\eps_n^f =\beta_n(\tau_n+r_n(1+\bar\gamma))^\alpha \leq \beta_n r_n^\alpha (2+\bar\gamma) \leq 3\xi.
\end{equation}
Together with~\eqref{eqn:hlnu} this gives $\eps_n^u \leq 6\xi$.  Fix $\zeta>0$ such that $(2+\alpha)\zeta < \delta$.  
By the assumption that $\lambda_n^u \geq -L$, we can choose $\xi$ sufficiently small that
\begin{equation}\label{eqn:enu}
e^{\hl_n^u} = e^{\lambda_n^u} - \eps_n^u \geq e^{\lambda_n^u} - 6\xi \geq e^{\lambda_n^u - \zeta}.
\end{equation}
Let $L_1=e^{L+\zeta}$, so that for all $n$ we have
\begin{equation}\label{eqn:L1}
e^{-\hl_n^u} \leq L_1 \text{ and } e^{\lambda_n^s - \hl_n^u} \leq L_1.
\end{equation}
Now choose $\bar\gamma$ sufficiently small so that in~\eqref{eqn:chin} we have
\begin{equation}\label{eqn:enchi}
\chi_n \geq \lambda_n^u + \log\left(\frac{1-\bar\gamma}{1+\bar\gamma}\right) \geq \lambda_n^u - 2\zeta.
\end{equation}
From~\eqref{eqn:clns},~\eqref{eqn:enf}, and~\eqref{eqn:L1}, we have
\[
\hat\eps_n \leq (1+L_1\bar\gamma)(3\xi) + (1+\bar\gamma)L_1(3\xi)^2,
\]
and so, using the fact that $\lambda_n^s\geq -L$ and decreasing $\xi$ if necessary,~\eqref{eqn:clns} gives
\begin{equation}\label{eqn:clns2}
\cl_n^s \geq \lambda_n^s - \zeta.
\end{equation}
Turning to~\eqref{eqn:rhon} and~\eqref{eqn:kn}, we see from~\eqref{eqn:L1},~\eqref{eqn:bd-r}, and~\eqref{eqn:bd-b} that
\begin{align*}
\rho_n(t) &\leq L_1(1+L_1\bar\gamma)\beta_n t^\alpha + L_1^2 \beta_n^2 t^{2\alpha} \\
&\leq \big(L_1(1+L_1\bar\gamma) \xi\kappa_n + L_1^2 \xi \big) t^\alpha
\end{align*}
for every $t\in [0, r_n]$.  We use this to prove~\eqref{eqn:kn}.  Indeed, for the moduli of continuity $Z_n^\psi(t) = \kappa_n t^\alpha$, the quantity on the right side of~\eqref{eqn:kn} is
\begin{equation}\label{eqn:ineq}
\begin{aligned}
e^{\lambda_n^s - \hl_n^u} Z_n^\psi(t) + \rho_n(t) &\leq \Big(\big( e^{\lambda_n^s - \hl_n^u}  + L_1(1+L_1\bar\gamma)\xi\big) \kappa_n + L_1^2\xi \Big) t^\alpha \\
&\leq \big( e^{\lambda_n^s - \lambda_n^u+\zeta}  + L_1(1+L_1\bar\gamma)\xi  + L_1^2\xi^2 \big) \kappa_n t^\alpha,
\end{aligned}
\end{equation}
where the second inequality uses the fact that $1\leq \xi\kappa_n$ (from~\eqref{eqn:bd-b} and the fact that $\beta_n\geq 1$).  Using the fact that $\lambda_n^s - \lambda_n^u \geq -L$ and decreasing $\xi$ if necessary, the last quantity is at most $\leq e^{\lambda_n^s - \lambda_n^u + 2\zeta} \kappa_n t^\alpha$.

Using the inequality $(2+\alpha)\zeta \leq\delta$ and multiplying both sides of~\eqref{eqn:rec-k} by $e^{\alpha(\lambda_n^u - \zeta)}$ gives
\[
e^{\alpha(\lambda_n^u-\zeta)} \kappa_{n+1} \geq e^{\lambda_n^s - \lambda_n^u + 2\zeta}\kappa_n
\]
and so the quantities in~\eqref{eqn:ineq} are bounded above by
\[
e^{\alpha(\lambda_n^u-\zeta)} \kappa_{n+1} t^\alpha \leq e^{\alpha\hl_n^u} \kappa_{n+1} t^\alpha = Z_{n+1}^\psi(te^{\hl_n^u}),
\]
where the first inequality uses~\eqref{eqn:enu}.  This establishes~\eqref{eqn:kn}.

The last estimate we need before verifying the remaining hypotheses of Theorem~\ref{thm:HP1} in~\eqref{eqn:rn}--\eqref{eqn:gn} is an estimate on $\eps_n^\sigma$: the first inequality below uses~\eqref{eqn:L1}, and the second uses~\eqref{eqn:enf},~\eqref{eqn:bd-k}, and~\eqref{eqn:bd-b}.
\begin{equation}\label{eqn:ens}
\begin{aligned}
\eps_n^\sigma &\leq L_1 \kappa_n e^{-\alpha\hl_n^u} (\eps_n^f)^\alpha \tau_n^\alpha + L_1(1+\bar\gamma) \beta_n \tau_n^\alpha (1+L_1(\eps_n^f)(1+\bar\gamma))^\alpha \\
&\leq L_1^{1+\alpha} (3\xi)^\alpha \sigma_n + L_1(1+\bar\gamma) \xi \sigma_n (1+L_1(3\xi)(1+\bar\gamma))^\alpha \\
&\leq L_2 \xi^\alpha \sigma_n.
\end{aligned}
\end{equation}

Now we can verify the conditions.  To verify~\eqref{eqn:rn}, we estimate the right-hand side using~\eqref{eqn:enu},~\eqref{eqn:enf},~\eqref{eqn:bd-t}, and~\eqref{eqn:rec-r}:
\[
e^{\hl_n^u} r_n - \eps_n^f \tau_n \geq (e^{\lambda_n^u-\zeta} - 3\xi)r_n \geq e^{\lambda_n^u - \delta} r_n \geq r_{n+1},
\]
where again we decrease $\xi$ if necessary.  The condition~\eqref{eqn:taun} follows immediately from~\eqref{eqn:rec-t} and~\eqref{eqn:clns2}.  For~\eqref{eqn:thetan}, we use~\eqref{eqn:enu},~\eqref{eqn:ens}, and~\eqref{eqn:rec-s} to obtain
\[
e^{\lambda_n^s - \hl_n^u}\sigma_n + \eps_n^\sigma \leq (e^{\lambda_n^s - \lambda_n^u + \zeta} + L_2\xi^\alpha) \sigma_n \leq e^{\lambda_n^s - \lambda_n^u + \delta} \sigma_n \leq \sigma_{n+1},
\]
where as always we decrease $\xi$ if necessary.  (Note that this is only done finitely many times.)  Finally,~\eqref{eqn:gn} follows directly from~\eqref{eqn:bd-s}.  Thus we can apply Theorem~\ref{thm:HP1} to obtain well-definedness of the graph transform.  We get~\eqref{eqn:param-expansion} from~\eqref{eqn:dFn} and~\eqref{eqn:enchi}.  The inequalities~\eqref{eqn:nearby-orbit} and~\eqref{eqn:C-vnvn} come from~\eqref{eqn:attraction} and~\eqref{eqn:vnvn}, and~\eqref{eqn:param-contraction} follows from~\eqref{eqn:contraction} and~\eqref{eqn:clns2}.

\subsection{Proof of Theorem~\ref{thm:tau}}

Let $\delta=\min(\hat\chi^u - \bar\chi^u, \bar\chi^s - \hat\chi^s) >0$, and let $\xi,\bar\gamma>0$ be given by Theorem~\ref{thm:parameters}.  Let $\bar{r}>0$, $\bar\sigma,\bar\tau\geq 0$ be small enough and $\bar\kappa$ be large enough so that
\begin{equation}\label{eqn:smallenough}
e^{L'}\bar\beta \bar{r}^\alpha \leq \xi, \quad e^{L'}\bar\beta \leq \xi\bar\kappa, \quad \bar\tau \leq \bar{r}, \quad \bar\kappa\bar\tau^\alpha \leq \bar\sigma, \quad \bar\sigma + \bar\kappa\bar{r}^\alpha\leq\bar\gamma.
\end{equation}
Now let $\bar\kappa \leq \hat\kappa \leq \bar\kappa e^{\alpha N \bar\chi^u}$ be such that \eqref{eqn:smallenough} holds with $\bar\kappa$ replaced by $\hat\kappa$.  We will work with $\hat\kappa$ from now on.

Define $c_n>0$ by $c_0=1$ and $c_{n+1} = \min(e^{\lambda_n^e - \delta'} c_n, 1)$, where $\delta'=\frac\delta{2\alpha}$.  We claim that $c_n \geq e^{-M_n^u}$ for all $n$.  Indeed, if $m\in [0,n]$ is maximal such that $c_m=1$, then~\eqref{eqn:M-hyp} yields $c_n = e^{\sum_{k=m}^{n-1} (\lambda_k^e - \delta)} \geq e^{(n-m)\bar\chi^u - M_n^u} \geq e^{-M_n^u}$.

Similarly, define $\hat c_n>0$ by $\hat c_0 = e^{-N\bar\chi^u}$ and the same recursion $\hat c_{n+1} = \min(e^{\lambda_n^e - \delta'} \hat c_n, 1)$.  If $\hat c_m < 1$ for all $0\leq m \leq n$, then we have
\[
\hat c_n = e^{\sum_{k=0}^{n-1}(\lambda_k^e - \delta)} \hat c_0
\geq e^{n\bar\chi^u - M_n^u} e^{-N\bar\chi^u},
\]
whereas if $\hat c_m=1$ for some $m$ then we have $\hat c_n = c_n \geq e^{-M_n^u}$ for every $n\geq m$.  In particular we observe that $\hat c_N \geq e^{-M_N^u}$.

Now let $r_n = \bar{r} c_n$ and $\kappa_n = \bar\kappa \hat c_n^{-\alpha}$, so that in particular $\kappa_0 = \hat\kappa$.  We observe that $\kappa_n \leq \hat\kappa c_n^{-\alpha}$.  Using the fact that $\lambda_n^e \leq \lambda_n^u$ and $\alpha \lambda_n^e \leq (1+\alpha)\lambda_n^u - \lambda_n^s$, we see that the recursive relations~\eqref{eqn:rec-r} and~\eqref{eqn:rec-k} are satisfied.

Let $\tau_n$ and $\sigma_n$ be given by
\begin{equation}\label{eqn:tausigma}
\begin{aligned}
\tau_n &:= \bar\tau e^{-M_0^s} e^{\sum_{k=0}^{n-1} (\lambda_k^s + \delta')}, \\
\sigma_n &:= \hat\kappa c_n^{-\alpha} \tau_n^\alpha.
\end{aligned}
\end{equation}
Then~\eqref{eqn:rec-t} is satisfied immediately.  To show~\eqref{eqn:rec-s}, we observe that by the definitions of $c_n$ and $\tau_n$, we have
\begin{align*}
\frac{\sigma_{n+1}}{\sigma_n} &= \frac{c_{n+1}^{-\alpha} \tau_{n+1}^\alpha}{c_n^{-\alpha} \tau_n^\alpha} 
\geq e^{-\alpha(\lambda_n^e - \delta')} e^{\alpha(\lambda_n^s + \delta')} 
= e^{\alpha(\lambda_n^s - \lambda_n^e) + \delta},
\end{align*}
using the relation $\delta=2\alpha\delta'$.  Thus to prove \eqref{eqn:rec-s} it suffices to show that $\alpha(\lambda_n^s - \lambda_n^e) \geq \lambda_n^s - \lambda_n^u$.  If the right hand side is positive (there is a deficiency from domination), then by the definition of $\lambda_n^e$ we have $\lambda_n^e \leq \lambda_n^u + \frac 1\alpha (\lambda_n^u - \lambda_n^s)$, and so
\[
\alpha(\lambda_n^s - \lambda_n^e) \geq (1+\alpha)(\lambda_n^s - \lambda_n^u)
\geq \lambda_n^s - \lambda_n^u.
\]
On the other hand, if $\lambda_n^s \leq \lambda_n^u$, then $\lambda_n^e \leq \lambda_n^u$ and so
\[
\alpha(\lambda_n^s - \lambda_n^e) \geq \alpha(\lambda_n^s - \lambda_n^u) \geq \lambda_n^s - \lambda_n^u.
\]
This shows that \eqref{eqn:rec-s} holds.

We have the following estimates on $\tau_n$ and $\sigma_n$:
\begin{align}
\label{eqn:taunleq}
\tau_n &\leq \bar\tau e^{-M_n^u} e^{n\bar\chi^s}, \\
\label{eqn:sigmanleq}
\sigma_n &\leq \hat\kappa e^{\alpha M_n^u} \bar\tau^\alpha e^{-\alpha M_0^s} e^{\alpha \sum_{k=0}^{n-1} (\lambda_k^s + \delta)}
\leq \bar\sigma e^{\alpha n \bar\chi^s}.
\end{align}

To verify the bounds~\eqref{eqn:bd-r}--\eqref{eqn:bd-s}, we first observe that $\beta_n \leq \bar\beta c_{n+1}^{-\alpha}$.  To see this, let $m\in[0,n]$ be maximal such that $\beta_m \leq\bar\beta$ (noting that such an $m$ exists by the assumption that $\beta_0\leq\bar\beta$).  Then
\[
\beta_n \leq \bar\beta e^{-\alpha \sum_{k=m+1}^n \frac 1\alpha \log \frac{\beta_{k-1}}{\beta_k}}
\leq \bar\beta e^{-\alpha \sum_{k=m+1}^n \lambda_k^e}
\leq \bar\beta c_{m+1}^\alpha c_{n+1}^{-\alpha} \leq \bar\beta c_{n+1}^{-\alpha}.
\]
One consequence of this is the bound
\begin{equation}\label{eqn:thetageq}
\sin\theta_{n+1} \geq \beta_n^{-1} \geq \bar\beta^{-1} c_{n+1}^\alpha \geq \bar\beta^{-1} e^{-\alpha M_{n+1}^u},
\end{equation}
where the first inequality follows from~\ref{C3}, which lets us take $\bar\theta=\bar\beta^{-1}$ and use $\theta\geq \sin\theta$ to get the bound on $\theta_n$ in Part~\ref{Mus}  Another consequence is that
\[
\beta_n \leq \bar\beta e^{L'} c_n^{-\alpha},
\]
and so using~\eqref{eqn:smallenough} we have $\beta_n r_n^\alpha \leq e^{L'}\bar\beta c_n^{-\alpha} \bar{r}^\alpha c_n^\alpha = e^{L'}\bar\beta \bar{r}^\alpha \leq \xi$, and similarly $\beta_n \kappa_n^{-1} = e^{L'} \bar\beta\bar\kappa^{-1} \leq \xi$, which verifies~\eqref{eqn:bd-r} and~\eqref{eqn:bd-b}.  We see that \eqref{eqn:bd-s} follows from~\eqref{eqn:smallenough} since $\kappa_n r_n^\alpha \leq \hat\kappa c_n^{-\alpha} \bar{r} c_n^\alpha = \hat\kappa \bar{r}^\alpha$.

The bounds~\eqref{eqn:bd-r}--\eqref{eqn:bd-b} follow just as before, while~\eqref{eqn:bd-t} follows since from~\eqref{eqn:taunleq} since $\bar\tau\leq \bar{r}$.  The definition of $\sigma_n$ in~\eqref{eqn:tausigma} makes~\eqref{eqn:bd-k} immediate, and~\eqref{eqn:bd-s} follows from the final inequality in~\eqref{eqn:smallenough}.  Having verified all the conditions of Theorem~\ref{thm:parameters}, we  observe that Parts~\ref{Mus}--\ref{D-3} of Theorem~\ref{thm:tau} follow from Theorem~\ref{thm:parameters} and the inequality $c_n \geq e^{-M_n^u}$.

For Part~\ref{Mus3} of Theorem~\ref{thm:tau}, we will use Part~\ref{C-4} of Theorem~\ref{thm:parameters}.  We bound $\hat{r}_n$ by
\begin{align*}
\hat{r}_n &= e^{\sum_{k=0}^{n-1} (-\lambda_k^u + \delta)} r_n + 3\xi \sum_{k=0}^{n-1} e^{\sum_{j=0}^{k-1} (-\lambda_j^u + \delta)} \tau_k \\
&= e^{\sum_{k=0}^{n-1} (-\lambda_k^u + \delta)} \bar{r} c_n + 3\xi \sum_{k=0}^{n-1} e^{\sum_{j=0}^{k-1} (-\lambda_j^u + \delta)} \bar\tau e^{-M_0^s} e^{\sum_{j=0}^{k-1} (\lambda_j^s + \delta')} \\
&\leq e^{-n\bar\chi^u} e^{M_n^u} \bar{r} + 3\xi \sum_{k=0}^{n-1} e^{-k\bar\chi^u} e^{M_k^u} \bar\tau e^{-M_0^s} e^{k\bar\chi^s} e^{-M_k^u} e^{M_0^s},
\end{align*}
where the last line uses \eqref{eqn:M-hyp}, \eqref{eqn:Ms}, and the fact that $c_n\leq 1$.  Thus
\begin{equation}\label{eqn:hatrnsmall}
\hat{r}_n \leq e^{-n\bar\chi^u} e^{M_n^u} \bar{r}
+ 3\xi \sum_{k=0}^{n-1} e^{-k(\bar\chi^u - \bar\chi^s)} \bar\tau.
\end{equation}
Using \eqref{eqn:param-contraction} and \eqref{eqn:Ms}, we have
\[
\|\ph_n - \psi_n\|_{C^0} \leq e^{n\bar\chi^s} e^{-M_n^u} e^{M_0^s} \cdot 2(\bar\tau + \bar\gamma \hat r_n),
\]
and so by choosing $\xi$ small enough we can use \eqref{eqn:hatrnsmall} to guarantee that
\[
\|\ph_n - \psi_n\|_{C^0} \leq 
e^{n\bar\chi^s} e^{-M_n^u} e^{M_0^s} (3\bar\tau + 2e^{-n\bar\chi^u} e^{M_n^u} \bar{r}),
\]
which proves \eqref{eqn:chius}.

Finally, Part~\ref{hyp-times3} of Theorem~\ref{thm:tau} follows  directly from the following lemma, due to Pliss~\cite{vP72}; a proof may be found in~\cite[Lemma 11.2.6]{BP07}.

\begin{lemma}\label{lem:pliss}
Given $L \geq \chi > \hat\chi > 0$, let $\rho = (\chi-\hat\chi)/(L-\hat\chi)$.  Then, given any real numbers $\lambda_1,\dots,\lambda_N$ such that
\[
\sum_{j=1}^N \lambda_j \geq \chi N \qquad\text{and}\qquad \lambda_j\leq L \text{ for every } 1\leq j\leq N,
\]
there are $\ell\geq \rho N$ and $1<n_1<\cdots<n_\ell\leq N$ such that
\[
\sum_{j=n+1}^{n_i} \lambda_j \geq \hat\chi(n_i-n) \qquad\text{for every } 0\leq n<n_i \text{ and } i=1,\dots,\ell.
\]
\end{lemma}

\subsection{Proof of Theorem~\ref{thm:HP2}}


Theorem~\ref{thm:HP2} follows directly from Theorem~\ref{thm:tau} by setting $\bar\sigma=\bar\tau=0$.  To get the appropriate density observe that for every $\chi^u < \chi^e$ we have $\frac 1N \sum_{k=0}^{N-1} \lambda_k^e \geq \chi^u$ for all sufficiently large $N$, whence the density of hyperbolic times is at least $(\chi^u - \hat\chi^u)/(L-\hat\chi^u)$, and since $\chi^u<\chi^e$ was arbitrary, this suffices.


\subsection{Proof of Theorem~\ref{thm:HP5}}\label{sec:C1+pfs2}

Let $\Gamma = \{ n\leq 0 \mid M_n(\hat\chi) = 0\}$.  We show that $\Gamma$ has positive lower asymptotic density.  Indeed, by~\eqref{eqn:lambdabig2} and the hypothesis on $\hat\chi^u$, for every $\chi\in (\hat\chi^u,\chi^e)$ there exists $N_0 < 0$ such that for all $N\leq N_0$ we have $\sum_{N\leq k<0} \lambda_k^e \geq \chi|N|$.  Given such an $N$, let $m_0=m_0(N)$ be the smallest value of $m$ with the property that
\begin{equation}\label{eqn:msum1}
\sum_{m\leq k<0} (\lambda_k^e - \hat\chi^u) \leq N (\hat\chi^u - \chi).
\end{equation}
By the assumption on $N_0$, this inequality fails for all $m<N$, and so $m_0\geq N$.  Furthermore, since $\lambda_k^e \leq L$, the equality is true as long as $|m| \leq \hat\rho |N|$, where $\hat\rho = (\chi-\hat\chi)/(L-\hat\chi)$ as in Lemma~\ref{lem:pliss}.  It follows that $N\leq m_0 \leq \hat\rho N$.

Let $\Gamma_N$ be the set of effective hyperbolic times $n\in (m_0,0]$; that is, the set of $n$ such that
\begin{equation}\label{eqn:msum2}
\sum_{m\leq k<n} (\lambda_k^e -\hat\chi^u) \geq 0
\end{equation}
for all $m_0\leq m< n$.  We claim that
\begin{enumerate}
\item $\Gamma_N \subset \Gamma$; and
\item $\#\Gamma_N \geq \hat\rho^2 |N|$.
\end{enumerate}
For the first claim, observe that given $n\in \Gamma_N$, it suffices to prove~\eqref{eqn:msum2} for $m<m_0$.  We can set $m=m_0$ in~\eqref{eqn:msum1} and~\eqref{eqn:msum2} and take the difference of the two inequalities to obtain
\begin{equation}\label{eqn:msum3}
\sum_{n\leq k<0} (\lambda_k^e - \hat\chi^u) \leq N (\hat\chi^u - \chi).
\end{equation}
Furthermore, for $m<m_0$ we have
\begin{equation}\label{eqn:msum4}
\sum_{m\leq k<0} (\lambda_k^e -\hat\chi^u)> N(\hat\chi^u - \chi)
\end{equation}
by the definition of $m_0$.  Subtracting~\eqref{eqn:msum3} from~\eqref{eqn:msum4} gives
\[
\sum_{m\leq k<n} (\lambda_k^e-\hat\chi^u) > 0,
\]
and so $M_n(\hat\chi^u)=0$, so $n\in \Gamma$.

For the second claim, we observe that by Lemma~\ref{lem:pliss} we have $\#\Gamma_N \geq \hat\rho |m_0|$, and it follows that from the earlier estimates on $m_0$ that $\#\Gamma_N \geq \hat\rho^2 |N|$.  This holds for all $N\leq N_0$, and so $\Gamma$ has lower asymptotic density at least $\hat\rho^2$.  As $\chi$ approaches $\chi^e$, we have $\hat\rho^2 \to \left( \frac{\chi^e - \hat\chi^u}{L-\hat\chi^u}\right)^2$, which proves the claim regarding asymptotic density of $\Gamma$.

Now fix $\delta < \min (\hat\chi^u - \bar\chi^u, \hat\chi^g)$, and let $\bar\gamma, \xi$ be as in Theorem~\ref{thm:parameters}, and $\bar{r},\bar\kappa$ as in~\eqref{eqn:smallenough}; let $\bar\theta=\bar\beta^{-1}$.  We want to define a sequence $c_n$ that will satisfy the recursive relationship
\begin{equation}\label{eqn:cn-recurse}
c_{n+1} = \min(e^{\lambda_n^e - \delta} c_n, 1)
\end{equation}
and allow us to define $r_n,\kappa_n$ as in the proof of Theorem~\ref{thm:tau}.  To this end, we let $\Theta = \{m<0 \mid \beta_m \leq \bar\beta\}$, and note that $\Theta$ is infinite by the hypotheses of the theorem.  Given $m\in \Theta$, define $\{c_n^{(m)} \mid m\leq n\leq 0\}$ by $c_m^{(m)} = 1$ and by~\eqref{eqn:cn-recurse} for $m<n\leq 0$.

Given $n\in \Gamma$ and $m\in \Theta$ with $m\leq n$, we have as in the proof of Theorem~\ref{thm:tau} that $c_n^{(m)} = 1$.  In particular, together with the definition of $c_n^{(m)}$, this shows that if $n\leq 0$ is arbitrary, then given any $m_1 \leq n_1 < n$ and $m_2 \leq n_2 < n$ with $m_i \in \Theta$ and $n_i\in\Gamma$, we have $c_n^{(m_1)} = c_n^{(m_2)}$.  Thus we may define without ambiguity a sequence $c_n$ as follows: given $n$, pick any $n'\in \Gamma\cap (-\infty,n)$ and any $m\in \Theta\cap(-\infty,n']$, and let $c_n=c_n^{(m)}$.

Part \ref{thetangeq} of Theorem~\ref{thm:HP5} follows from the same argument as~\eqref{eqn:thetageq} in the proof of Theorem~\ref{thm:tau}.  Also as in that proof, we let $r_n = \bar{r} c_n$, $\kappa_n = \bar\kappa c_n^{-\alpha}$, and $\gamma_n = \bar\gamma$ for all $n$.  The arguments there show that~\eqref{eqn:hZ}--\eqref{eqn:gn} are satisfied, and so Parts \ref{inv-mfd} and \ref{B-exp} of Theorem~\ref{thm:HP5} follow from Parts \ref{inv} and \ref{inv-exp} of Theorem~\ref{thm:HP3}, noting the bound $c_n \geq e^{-M_n(\hat\chi^u)}$ from the proof of Theorem~\ref{thm:tau}. 

Part \ref{B-unique} of Theorem~\ref{thm:HP5} follows from Part \ref{unique} of Theorem~\ref{thm:HP3} once we verify~\eqref{eqn:backwards} using the criterion of asymptotic domination.  As in the proof of Theorem~\ref{thm:parameters}, for any fixed $\delta>0$ we can choose $\bar\gamma, \bar{r}, \bar\theta$ small enough and $\bar\kappa$ large enough that $\cl_n^s < \lambda_n^s + \delta$ and $\hl_n^u > \lambda_n^u - \delta$.  Choosing $\delta$ such that $2\delta < \hat\chi^g$, we see from~\eqref{eqn:asymp-domb} that
\[
\llim_{n\to-\infty} \frac 1{|n|} \sum_{k=n}^{-1} (\hl_k^u - \cl_k^s) > 0,
\]
which implies \eqref{eqn:backwards} because $\gamma_n=\bar\gamma$ is constant.

To complete the proof of Theorem~\ref{thm:HP5}, it remains only to show Part \ref{B-char}, but this follows directly from Part \ref{dyn-char-2} of Theorem~\ref{thm:HP3}.

\subsection{Proof of Propositions~\ref{prop:verifying} and~\ref{prop:verifying2}}

Fix $\bar\beta$ and let $\delta = \ud\{n \mid \beta_n > \bar\beta\}$.  Using~\ref{C4}, there exists $L'>0$ such that $\lambda_n^e = \lambda_n^u - \Delta_n$ if $\beta_n \leq \bar\beta$ and $\lambda_n^e \geq -L'$ otherwise.  Now we have
\[
\llim_{n\to\infty} \frac 1n \sum_{k=0}^{n-1} \lambda_k^e \geq \left( \frac 1n \sum_{k=0}^{n-1} (\lambda_k^u - \Delta_k) \right) - \delta L',
\]
and since $\delta L'$ can be made arbitrarily small by taking $\bar\beta$ large, we are done.

\section{Proofs of applications}\label{sec:appspfs}

\subsection{Proof of Theorem~\ref{thm:HP6}}
For Theorem~\ref{thm:HP6}, it suffices to apply Theorem~\ref{thm:HP5} using local coordinates around the backwards trajectory of $x$.  

\subsection{Proof of Theorem~\ref{thm:closing}}

Given parameters $r,\tau,\sigma,\kappa$, let $\CCC_n$ be defined as in~\eqref{eqn:C} for the decomposition $T_{f^n(x)} M = Df^n(E^u) \oplus Df^n(E^s)$.  Consider the collection of $u$-admissible manifolds
\[
\WWW_n^u(r,\tau,\sigma,\kappa) := \{ \exp_{f^n(x)}\graph\psi \mid \psi \in \CCC_n(r,\tau,\sigma,\kappa) \}.
\]
Define the set of $s$-admissible manifolds $\WWW_n^s$ similarly, with the roles of $s,u$ reversed.

Fix $\bar\chi^{s,u}$ such that $\hat\chi^s < \bar\chi^s<0<\bar\chi^u<\hat\chi^u$, and let $\bar\gamma,\bar{r},\bar\theta,\bar\sigma,\bar\tau,\bar\kappa>0$ be given by Theorem~\ref{thm:tau}.  Assume that the parameters are chosen so that the bounds in \eqref{eqn:param-bounds} hold when $\bar\kappa$ is replaced by $2\hat\kappa$, where $\hat\kappa = \bar\kappa e^{\alpha M^u}$.  Let $p_0$ be such that $p_0 \bar\chi^u \geq M^u \log 2$.

Using~\eqref{eqn:ceh-Mnu} and~\eqref{eqn:ceh-Mu} to verify~\eqref{eqn:M-hyp} and~\eqref{eqn:ceh-hMs}--\eqref{eqn:ceh-hMs2} to verify~\eqref{eqn:Ms}, we can apply Theorem~\ref{thm:tau} to show that for $p\geq p_0$, the map $f^p$ induces a well-defined graph transform
\[
\WWW_0^u(\bar{r},\bar\tau e^{-\hat{M}^s}, \bar\sigma e^{-\hat M^s}, 2\hat\kappa) 
\to
\WWW_p^u(\bar{r} e^{-M^u}, \bar\tau e^{-M^u} e^{p \bar\chi^s}, \bar\sigma e^{\alpha p \bar\chi^s},\hat\kappa).
\]
Let $\hat\tau = \frac 12 \bar\tau e^{-\hat{M}^s}$ and $\hat\sigma = \frac 12 \bar\sigma e^{-\hat{M}^s}$.  Then increasing $p_0$ if necessary, we have for $p\geq p_0$ that the graph transform induced by $f^p$ acts between
\[
\WWW_0^u(\bar{r},2\hat\tau,2\hat\sigma,2\hat\kappa)
\to \WWW_p^u(\bar{r} e^{-M^u}, \hat\tau,\hat\sigma,\hat\kappa).
\]
Let $\hat{r} = e^{-p_0 \bar\chi^u} e^{M^u}\bar{r} + \bar\tau$ and choose $\bar\tau$, $p_0$ such that $2\hat{r} \leq \bar{r} e^{-M^u}$.  Then by Part \ref{Mus3} of Theorem~\ref{thm:tau}, the graph transform induced by $f^p$ acts between
\[
\WWW_0^u(\hat{r},2\hat\tau,2\hat\sigma,2\hat\kappa)
\to \WWW_p^u(2\hat{r}, \hat\tau,\hat\sigma,\hat\kappa).
\]
Now we can choose $\eps>0$ such that under the conditions of the theorem, the map $\exp_x \circ \exp_{f^p(x)}^{-1}$ embeds $\WWW_p^u(2\hat{r}, \hat\tau,\hat\sigma,\hat\kappa)$ into $\WWW_0^u(\hat{r},2\hat\tau,2\hat\sigma,2\hat\kappa)$, and we can view the graph transform induced by $f^p$ as a self-map on $\WWW_0^u$.  By \eqref{eqn:chius}, this self-map is a contraction, and so iterating any $u$-admissible manifold under this transform yields a sequence of $u$-admissible manifolds converging to a fixed point of the transform -- that is, a $u$-admissible manifold $W^u$ near $x$ such that $f^p(W^u) \supset W^u$.

Apply the same argument to $s$-admissible manifolds we obtain a fixed point for the graph transform associated to $f^{-p}$ -- that is, an $s$-admissible manifold $W^s$ near $x$ such that $f^{-p}(W^s) \supset W^s$.  By the bounds that $\WWW_0^u$ and $\WWW_0^s$ impose on the geometry of $W^u$ and $W^s$, they have a unique intersection point $z$, which is the periodic point we seek.

\section{Derivation of classical Hadamard--Perron theorems}\label{sec:classical}

We state two classical Hadamard--Perron theorems that follow from Theorem~\ref{thm:HP3}.  The uniform version in Theorem~\ref{thm:uniform} is derived from~\cite[Theorem 6.2.8]{KH95}, while the non-uniform version in Theorem~\ref{thm:nonuniform} follows~\cite[Theorem 7.5.1]{BP07}.

\subsection{Uniform hyperbolicity}

Fix $r_0>0$ and let $\Omega = B^u(0,r_0)\times B^s(0,r_0) \subset \RR^d$, where $B^u$ and $B^s$ are the balls in $E^u = \RR^k$ and $E^s=\RR^{d-k}$, respectively.  Let $\mu,\lambda\in\RR$ be such that $\mu>\max(1,\lambda)$ and for each $n\leq 0$ let $f_n \colon \Omega \to \RR^d$ be a $C^1$ map such that for $(x,y)\in \RR^k \oplus \RR^{d-k}$
\[
f_n(x,y) = (A_n x + g_n(x,y), B_n y + h_n(x,y))
\]
for some linear maps $A_n\colon \RR^k\to\RR^k$ and $B_n\colon \RR^{d-k} \to \RR^{d-k}$ with $\|A_n^{-1}\| \leq \mu^{-1}$, $\|B_n\|\leq \lambda$ and $g_n(0) = 0$, $h_n(0)=0$.

\begin{theorem}\label{thm:uniform}
There exists $\gamma_0 = \gamma_0(\mu,\lambda) \in (0,1]$ such that for all $0 < \gamma < \gamma_0$ there exists $\delta_0 = \delta_0(\mu,\lambda,\gamma)$ such that the following is true.

If $\max(\|g_n\|_{C^1}, \|h_n\|_{C^1}) < \delta < \delta_0$ for all $n$, then there exist $\lambda' = \lambda'(\lambda,\gamma,\delta) < \mu' = \mu'(\mu,\gamma,\delta)$ such that $\lim_{\gamma,\delta\to 0} \lambda' = \lambda$, $\lim_{\gamma,\delta\to 0} \mu' = \mu$ and a unique family  $\{W_n^+\}_{n\in\ZZ}$ of $k$-dimensional $C^1$ manifolds
\[
W_n^+ = \{ (x,\ph_n^+(x)) \mid x\in \RR^k \} = \graph \ph_n^+
\]
where $\ph_n^+\colon B^u(r_0) \to B^s(r_0)$, $\sup_{n\leq 0} \|D\ph_n^+\| < \gamma$, for which the following properties hold.
\begin{enumerate}[label=(\roman{*})]
\item $f_n(W_n^+)\cap \Omega_{n+1} = W_{n+1}^+$.
\item $\|f_n(y) - f_n(z)\| > \mu'\|y - z\|$ for $y,z\in W_n^+$.
\item Let $\lambda' < \nu < \mu'$.  If $\|f_{n-L}^{-1} \circ \cdots \circ f_{n-1}^{-1}(z) \| < C\nu^{-L}\|z\|$ for all $L\geq 0$ and some $C>0$ then $z\in W_n^+$.
\end{enumerate}
\end{theorem}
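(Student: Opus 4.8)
The plan is to derive Theorem~\ref{thm:uniform} as a direct specialisation of Theorem~\ref{thm:HP3} applied to the sequence $\{f_n\mid n\leq 0\}$. First I would set up the abstract data: take the orthogonal splitting $E_n^u=\RR^k$, $E_n^s=\RR^{d-k}$, so $\theta_n=\pi/2$ and $\sin\theta_{n+1}=1$; the hypotheses $\|A_n^{-1}\|\leq\mu^{-1}$ and $\|B_n\|\leq\lambda$ give~\eqref{eqn:C3a}--\eqref{eqn:C3b} with $\lambda_n^u=\log\mu$, $\lambda_n^s=\log\lambda$, while~\eqref{eqn:C3c} is trivial; condition~\ref{C2} holds since $Df_n(0)=\diag(A_n,B_n)$ preserves the splitting, and~\ref{C4} holds with $L=|\log\mu|+|\log\lambda|+1$. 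I would then take all parameters constant: $r_n=r_0$, $\gamma_n=\gamma$, $\tau_n=\sigma_n=0$. With these choices~\ref{C5} is automatic (as $\gamma<1$), and the recursions~\eqref{eqn:rn}--\eqref{eqn:gn} collapse to the inequalities $r_0\leq e^{\hl_n^u}r_0$, $0\geq 0$, and $\gamma\geq\min(e^{\hl_n^s-\hl_n^u}\gamma,\,Z_{n+1}^\psi(r_0))$, all of which hold as soon as $\delta$ is small enough that $e^{\hl_n^u}>1$ and $\hl_n^s<\hl_n^u$.

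The only point requiring genuine thought is the construction of the moduli of continuity, since the $C^1$ hypothesis (unlike $C^{1+\alpha}$) carries no quantitative modulus. For $Z_n^f$ I would take $Z_n^f(t)=\min(\omega_n(t),\,2\delta)$, where $\omega_n$ is any modulus of continuity for $D(g_n,h_n)$ on $\overline{\Omega}$ and the constant $2\delta$ bounds $\|D(g_n,h_n)\|_{C^0}$ by virtue of $\max(\|g_n\|_{C^1},\|h_n\|_{C^1})<\delta$; this is a bona fide modulus with $Z_n^f(0)=0$, and crucially it forces $\eps_n^f=\hZ_n^f(r_0(1+\gamma))\leq 2\delta$ uniformly in $n$ (here $\hZ_n^f=Z_n^f$ because $\sin\theta_{n+1}=1$), which also gives~\eqref{eqn:rnbn}. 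Consequently the quantities $\hl_n^u,\hl_n^s,\cl_n^s,\chi_n$ of~\eqref{eqn:hlnu}--\eqref{eqn:clns} differ from $\log\mu,\log\lambda,\log\lambda,\log\mu$ by amounts tending to $0$ as $\gamma,\delta\to 0$; note $\eps_n^s$ carries a factor $1+\gamma^{-1}$, which is exactly why $\delta_0$ must depend on $\gamma$. For $Z_n^\psi$ I would invoke the series~\eqref{eqn:Znpsisum}: since $\lambda_j^s-\hl_j^u\leq\log(\lambda/\mu)+o(1)<0$, the series converges geometrically at $n=0$ for every $t\in(0,r_0)$, and $\lim_{t\to 0}Z_0^\psi(t)=0$ by dominated convergence (using $\hZ_k^f(0)=0$); Proposition~\ref{prop:checkmoduli} then yields~\eqref{eqn:kn}, completing the verification of~\eqref{eqn:rnbn}--\eqref{eqn:gn}.

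With all hypotheses of Theorem~\ref{thm:HP3} in place, I would read off the conclusions. Setting $W_n^+=\graph\psi_n$ with $\ph_n^+=\psi_n\in\CCC_n'(r_0,0,0,\gamma,Z_n^\psi)$ gives $\sup_{n\leq 0}\|D\ph_n^+\|\leq\gamma$, and property~(i) is immediate from $G_n\psi_n=\psi_{n+1}$ together with $\|\psi_{n+1}\|_{C^0}<r_0$, so that the graph over $B^u(r_0)$ inside $f_n(W_n^+)$ already lies in $\Omega_{n+1}$. Putting $\mu'=\inf_n e^{\chi_n}=\inf_n e^{\hl_n^u}/(1+\gamma)$ and $\lambda'=\sup_n e^{\cl_n^s}$, one checks $\lambda'<\mu'$ and $\mu'>1$ for $\gamma,\delta$ small, with $\lambda'\to\lambda$, $\mu'\to\mu$; property~(ii) follows from~\eqref{eqn:dFn2} applied over the single step $[n,n+1]$ since $e^{\chi_n}\geq\mu'$. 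For property~(iii), given $\nu\in(\lambda',\mu')$ I would set $t_*=\inf_k(\log\nu-\cl_k^s)/(\hl_k^u-\cl_k^s)$, which is positive since $\log\nu>\log\lambda'=\sup_k\cl_k^s$ while $\hl_k^u-\cl_k^s\leq\log(\mu/\lambda)$; then $\log\nu\geq\cl_k^s+t_*(\hl_k^u-\cl_k^s)$ for all $k$, and rewriting $\|f_{n-L}^{-1}\circ\cdots\circ f_{n-1}^{-1}(z)\|<C\nu^{-L}\|z\|$ as $\|x_m\|\leq(C\|z\|)e^{-\sum_{k=m}^{n-1}\log\nu}$ places us precisely in the situation of Part~\ref{dyn-char-2} of Theorem~\ref{thm:HP3}, so $z\in W_n^+$.

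Uniqueness comes from Part~\ref{unique} of Theorem~\ref{thm:HP3}: condition~\eqref{eqn:backwards} holds because $\log\gamma_n$ is constant and $\cl_k^s-\hl_k^u\leq\log(\lambda/\mu)+o(1)<0$, so the partial sums diverge to $-\infty$; this gives uniqueness of $\psi_n$ within $\CCC_n'$, and matching it to the exact class in the statement is a routine consequence of the characterisation~(iii) together with the expansion bound~\eqref{eqn:dFn2} (any $C^1$ family with the slope bound is, on each finite window, a graph-transform image of its left endpoint, whose derivative acquires the modulus $Z_n^\psi$ in the limit via the contractive recursion implicit in~\eqref{eqn:deltaDbar} and whose $C^0$-offset is controlled by~\eqref{eqn:attraction}). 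The main obstacle throughout is exactly this bookkeeping of moduli of continuity: one must keep $Z_n^f$ a legitimate modulus so that $Z_0^\psi(t)\to 0$, keep $\eps_n^f$ uniformly small so that $\hl_n^u,\cl_n^s$ stay near $\log\mu,\log\lambda$, and confirm that the series~\eqref{eqn:Znpsisum} for $Z_n^\psi$ both converges and remains small enough to respect~\eqref{eqn:gn}; everything else is substitution into Theorem~\ref{thm:HP3}.
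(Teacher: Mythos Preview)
Your proposal is correct and follows essentially the same route as the paper's own derivation: specialise to the orthogonal splitting with constant parameters $r_n=r_0$, $\gamma_n=\gamma$, $\tau_n=\sigma_n=0$, bound $\eps_n^f$ uniformly by (a multiple of) $\delta$, invoke Proposition~\ref{prop:checkmoduli} to produce $Z_n^\psi$, and read off the conclusions from Theorem~\ref{thm:HP3}. Your treatment of the modulus $Z_n^f(t)=\min(\omega_n(t),2\delta)$ is in fact slightly more careful than the paper's, which tacitly uses a constant bound without ensuring $Z_n^f(0)=0$; and your explicit verification of the hypothesis of Part~\ref{dyn-char-2} via $t_*>0$, together with the uniqueness argument via characterisation~(iii), fills in details the paper leaves implicit.
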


\begin{remark}
The result in~\cite[Theorem 6.2.8]{KH95} covers stable manifolds as well; to get these one need only apply the above result to the sequence of inverse maps, placing similar requirements on the nonlinear parts of $f_n^{-1}$.
\end{remark}

\begin{proof}[Derivation of Theorem~\ref{thm:uniform} from Theorem~\ref{thm:HP3}]
Translating the hypotheses of Theorem~\ref{thm:uniform} into the notation of Theorems~\ref{thm:HP1} and~\ref{thm:HP3}, we have
\[
e^{\lambda_n^u} = \mu, \qquad e^{\lambda_n^s} = \lambda, \qquad \theta_n = \frac\pi2.
\]
Let $0 < \gamma_0\leq 1$ be such that
\[
\lambda(1+\gamma_0) < \mu,
\]
and given $0<\gamma<\gamma_0$, let $\delta_0$ be such that
\[
\max\left(1,\, \lambda + (1+\gamma^{-1})\delta_0\right) < \frac{\mu - \delta_0(1+\gamma)}{1+\gamma}.
\]
Now given $0<\delta<\delta_0$, let
\begin{align*}
\lambda' &:= \lambda + (1+\gamma^{-1})\delta, \\
\mu' &:= \frac{\mu - \delta(1+\gamma)}{1+\gamma}.
\end{align*}
If $\max(\|g_n\|_{C^1}, \|h_n\|_{C^1}) < \delta$, then we have $\hZ_n^f(t) <\delta$ for  all $t$, and so \eqref{eqn:rnbn} gives $\eps_n^f \leq \delta$.  Taking $\gamma_n=\gamma$ for all $n$, \eqref{eqn:hlnu}--\eqref{eqn:clns} give
\[
\eps_n^u \leq (1+\gamma)\delta,\quad \eps_n^s \leq (1+\gamma^{-1})\delta, \quad
\check\eps_n \leq (1+\gamma^{-1})\delta,\quad \eps_n^\chi\geq -\log(1+\gamma),
\]
from which we have
\[
\max\left(e^{\cl_n^s},e^{\hl_n^s}\right) \leq \lambda' < \mu' \leq e^{\chi_n} \leq e^{\hl_n^u}.
\]
In particular,~\eqref{eqn:backwards} is satisfied.  We see that~\eqref{eqn:rn}--\eqref{eqn:gn} are  satisfied if we take $\gamma_n = \gamma$ for all $n$ and if we take $r_n=r_0$.  

Thus it only remains to get moduli of continuity $Z_n^\psi$ satisfying \eqref{eqn:kn}, which we do via Proposition \ref{prop:checkmoduli}.  This requires checking that the sum in~\eqref{eqn:Znpsisum} converges when $n=0$.  In the notation of the present theorem, this sum becomes
\[
Z_0^\psi(t) = \sum_{k<0} \mu' \left(\frac \lambda{\mu'}\right)^{-(k+1)} \left( 1 + \frac {\lambda'}{\mu'} \gamma \right)
\hZ_k^f\left( t (\mu')^{-k} \right).
\]
Write $\xi = \lambda'/\mu' < 1$.  Then it suffices to check that the sum
\[
\sum_{m>0} \xi^m \hZ_{-m}^f (t(\mu')^m)
\]
converges and goes to $0$ as $t\to 0$.  Convergence is immediate for all $t$, because $\hZ_{-m}^f \leq \delta$.  For the limit, let $\alpha>0$ be arbitrary and take $M$ such that $\sum_{m>M} \xi^m < \alpha$.  Then take $\tau$ such that $\sum_{m=0}^M \hZ_{-m}^f(\tau(\mu')^m) < \alpha$.  It follows that for every $0<t<\tau$ we have
\[
\sum_{m>0} \xi^m \hZ_{-m}^f (t(\mu')^m) \leq \alpha\delta + \alpha.
\]
Since $\alpha$ was arbitrary this completes the proof: \eqref{eqn:Znpsisum} holds, hence Theorem~\ref{thm:HP3} applies, and the conclusions of Theorem~\ref{thm:HP3} imply the conclusions of Theorem~\ref{thm:uniform}.
\end{proof}

\subsection{Non-uniform hyperbolicity}

The classical non-uniform result can be found in~\cite[Theorem 7.5.1]{BP07}.  We give a version adapted to our notation and our convention of working with unstable manifolds rather than stable manifolds.  

In the non-uniform setting, one considers a sequence of diffeomorphisms and uses the Lyapunov metric, which has the effect that the rates of expansion and contraction are still uniform, as is the angle between the stable and unstable directions, but the amount of nonlinearity may grow.

Let $\Omega = B^u(0,r_0)\times B^s(0,r_0) \subset \RR^d$.  For each $n\leq 0$ let $f_n \colon \Omega \to \RR^d$ be a $C^{1+\alpha}$ map such that for $(x,y)\in \RR^k \oplus \RR^{d-k}$ we have
\[
f_n(v,w) = (A_n v + g_n(v,w), B_n w + h_n(v,w)),
\]
where $A_n\colon \RR^k\to \RR^k$ and $B_n\colon \RR^{d-k}\to\RR^{d-k}$ are linear maps and $g_n\colon \RR^d\to\RR^k$ and $h_n\colon\RR^d\to\RR^{d-k}$ are nonlinear maps defined for each $v\in B^s(r_0) \subset \RR^k$ and $w\in B^u(r_0) \subset \RR^{d-k}$, with the property that $g_n(0,0) = Dg_n(0,0) = h_n(0,0) = Dh_n(0,0) = 0$.

Given $n\leq 0$, write $F_n = f_{-1} \circ f_{-2} \circ \cdots \circ f_n$, and write $F_n^{-1}$ wherever the inverse is defined.  Let $\kappa$ be any number satisfying
\[
\max\{\lambda',\zeta^{1/\alpha}\} < \kappa < \mu',
\]
where the numbers $\lambda',\mu'$, and $\zeta$ satisfy
\[
\|A_n^{-1}\|^{-1} \geq \mu',\qquad
\|B_n\| \leq \lambda', \text{ where } \mu' > \max\{1,\lambda'\},
\]
as well as
\[
1 < \zeta < (\mu')^\alpha, \qquad 0 < \alpha \leq 1, \qquad C > 0
\]
such that
\[
\|Dg_n(v_1,w_1) - Dg_n(v_2,w_2)\| \leq C\zeta^{|n|} (\|v_1-v_2\| + \|w_1-w_2\|)^\alpha,
\]
and similarly for $h_n$.  

\begin{theorem}\label{thm:nonuniform}
There exist $D>0$ and $r_0>r>0$ and a map $\psi^u\colon B^u(r) \to \RR^{d-k}$ such that
\begin{enumerate}
\item $\psi^u$ is of class $C^{1+\alpha}$ and $\psi^u(0)=0$ and $D\psi^u(0)=0$;
\item $\|D\psi^u(v_1) - D\psi^u(v_2)\| \leq D\|v_1-v_2\|^\alpha$ for any $v_1,v_2\in B^u(r)$;
\item if $n\leq 0$ and $v\in B^u(r)$ then
\begin{align*}
F_n^{-1} (v,\psi^u(v)) &\in B^u(r) \times B^s(r), \\
\left\|F_n^{-1} (v,\psi^u(v)) \right\| &\leq D\kappa^n \|(v,\psi^u(v))\|;
\end{align*}
\item given $v\in B^u(r)$ and $w\in B^s(r)$, if there is a number $K>0$ such that
\[
F_n^{-1} (v,w) \in B^u(r)\times B^s(r), \qquad
\left\|F_n^{-1} (v,w) \right\| \leq K\kappa^n
\]
for every $n\leq 0$, then $w=\psi^u(v)$;
\item the numbers $D$ and $r$ depend only on the numbers $\lambda',\mu',\zeta,\alpha,\kappa$, and $C$.
\end{enumerate}
\end{theorem}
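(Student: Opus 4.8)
The goal is to deduce the classical non-uniform Hadamard--Perron theorem (Theorem~\ref{thm:nonuniform}) from Theorem~\ref{thm:HP3}, which applies to sequences of $C^1$ maps. The key translation is that the hypotheses of Theorem~\ref{thm:nonuniform} put us in a uniformly hyperbolic setting for the \emph{linear} parts (the angles are $\pi/2$, the rates $\mu',\lambda'$ are uniform in $n$), so the only genuinely non-uniform quantity is the H\"older semi-norm of $Dg_n, Dh_n$, which may grow like $\zeta^{|n|}$. Since $\zeta < (\mu')^\alpha$, this growth is dominated by the expansion rate, which is exactly what makes the sum in Proposition~\ref{prop:checkmoduli} converge. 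First I would translate notation: set $e^{\lambda_n^u} = \mu'$, $e^{\lambda_n^s} = \lambda'$, $\theta_n = \pi/2$, so that $\hZ_n^f(t)$ is controlled by $C\zeta^{|n|} t^\alpha$ (up to the constant coming from the $\sin\theta_{n+1}$ factor, here equal to $1$, and the way the H\"older norm of $Df_n$ on $\RR^d = E^u\oplus E^s$ relates to those of $Dg_n$ and $Dh_n$). Condition~\ref{C4} holds with $L = \max(|\log\mu'|,|\log\lambda'|)$; note that $\beta_n$ here can be taken to grow like $\zeta^{|n|}$, which is allowed.

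Next I would choose the parameters. Take $r_n = r$ constant (to be shrunk later), $\gamma_n = \gamma$ constant for a suitably small $\gamma$, and $\tau_n = \sigma_n = 0$, since we want manifolds through the origin tangent to $E^u$. With these choices, and after shrinking $r$ so that $\eps_n^f = \hZ_n^f(r(1+\gamma))$ is small enough, the quantities $\hl_n^u, \hl_n^s, \cl_n^s, \chi_n$ from~\eqref{eqn:hlnu}--\eqref{eqn:clns} satisfy bounds of the form $\max(e^{\cl_n^s}, e^{\hl_n^s}) \leq \lambda'' < \kappa < \mu'' \leq e^{\chi_n} \leq e^{\hl_n^u}$, where $\lambda'', \mu''$ depend only on $\lambda', \mu', \gamma$, and on an upper bound for $\eps_n^f$ — and these can be made as close to $\lambda', \mu'$ as desired by taking $\gamma$ and $r$ small. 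Crucially, these bounds are \emph{uniform in $n$}, because the $\eps_n^f$ are bounded even though the $\beta_n$ are not: what matters is $\hZ_n^f$ evaluated at the small radius $r(1+\gamma)$, and by choosing $r$ small we can always force $\eps_n^f$ below any threshold uniformly, since the exponent $\zeta^{|n|}$ only enters multiplied by $r^\alpha$, and — wait, here I must be careful: $\zeta^{|n|}$ is unbounded in $n$. So instead I should take $r_n = r_0 \rho^{|n|}$ decaying geometrically with $\rho$ chosen between $1$ and $\zeta^{-1/\alpha}$... no — actually the correct move is the one used in the uniform case: keep $r_n = r_0$ fixed but observe that the conditions~\eqref{eqn:rnbn}--\eqref{eqn:gn} and~\eqref{eqn:Znpsisum} involve $\hZ_k^f$ evaluated at arguments that shrink geometrically as $k\to-\infty$ (because of the factor $e^{-\sum \hl_j^u}$), and $\hZ_k^f(t) \leq C\zeta^{|k|} t^\alpha$, so the relevant products are bounded by $C\zeta^{|k|}(\mu'')^{-\alpha|k|} t^\alpha$, which is summable precisely because $\zeta < (\mu')^\alpha$ forces $\zeta(\mu'')^{-\alpha} < 1$ once $\mu''$ is close enough to $\mu'$.

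So the core of the argument is the verification, via Proposition~\ref{prop:checkmoduli}, that the series in~\eqref{eqn:Znpsisum} converges for $n=0$ and tends to $0$ as $t\to 0$: each term is bounded by a constant times $\zeta^{|k|} \cdot ((\mu'')^{-1})^{\alpha|k|} \cdot (\text{bounded factors}) \cdot t^\alpha$, a convergent geometric series in $|k|$ whose sum is $O(t^\alpha) \to 0$. This gives a sequence $Z_n^\psi$ of moduli of continuity satisfying~\eqref{eqn:kn}, and in fact $Z_n^\psi(t) \leq D t^\alpha$ for a uniform constant $D$ depending only on $\lambda', \mu', \zeta, \alpha, C$ (and the chosen $\gamma$, which itself depends only on these), so the limiting manifold is $C^{1+\alpha}$ with H\"older constant $D$. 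Then Theorem~\ref{thm:HP3}\ref{inv} gives the family $\psi_n$ with $G_n\psi_n = \psi_{n+1}$; conclusion (1) follows since $\tau_n=\sigma_n=0$; conclusion (2) is the bound $Z_0^\psi(t)\leq Dt^\alpha$; conclusion (3) comes from~\eqref{eqn:vnvn} applied backwards (the backward contraction rate is $(\mu'')^{-1} < \kappa^{-1}$, wait — we need $\kappa > \mu''$? no, $\kappa < \mu'$ and we need decay at rate $\kappa^n = \kappa^{-|n|}$, so we need the backward contraction to be \emph{at least} $\kappa^{-|n|}$, i.e. $e^{-\hl_n^u} \leq \kappa^{-1}$, i.e. $e^{\hl_n^u} \geq \kappa$, which holds since $e^{\hl_n^u} \geq \mu'' > \kappa$); conclusion (4) is the uniqueness/characterization from Theorem~\ref{thm:HP3}\ref{dyn-char-2}, using~\eqref{eqn:backwards} which holds because $\cl_n^s - \hl_n^u \leq \log(\lambda''/\mu'') < 0$ uniformly and $\gamma_n$ is bounded; and conclusion (5) is the observation that every constant introduced ($\gamma$, then $r$, then $D$) was chosen as a function of $\lambda', \mu', \zeta, \alpha, C$ alone.

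\textbf{Main obstacle.} The delicate point — and the one deserving the most care — is handling the unbounded non-linearity: reconciling the fact that $\beta_n \sim \zeta^{|n|} \to \infty$ with the need for \emph{uniform} bounds on $\eps_n^f$ and on the derived rates $\hl_n^u$, etc. The resolution is that one does \emph{not} get uniform smallness of $\eps_n^f$ by shrinking a fixed radius (that fails, since $\zeta^{|n|}$ wins); rather, the conditions of Theorem~\ref{thm:HP3} that actually matter — the recursion~\eqref{eqn:kn} and the convergence of~\eqref{eqn:Znpsisum} — only ever evaluate $\hZ_k^f$ at radii that decay geometrically in $|k|$ at rate $(\mu'')^{-1}$, and $\zeta < (\mu')^\alpha$ guarantees this beats the growth. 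This is the same mechanism as in the uniform case but requires being honest about which quantities need to be uniform (the rates, via their appearance in the exponential sums) and which only need to be summable (the error terms $\eps_n^f, \hZ_n^f$ at the relevant small scales). A secondary technical nuisance is that Theorem~\ref{thm:HP3} as stated uses $r_n$ appearing in $\CCC_n'$; one should check that the domain $B^u(r)$ of the final $\psi^u$ can be taken uniform — which works because $r_n$ can be taken constant once $\eps_n^f$ appears only through geometrically-weighted sums, i.e. conditions~\eqref{eqn:rn}--\eqref{eqn:gn} with $\tau_n=\sigma_n=0$ reduce to $r_{n+1}\leq e^{\hl_n^u}r_n$ and $\gamma_{n+1}\geq\min(e^{\hl_n^s-\hl_n^u}\gamma_n, Z_{n+1}^\psi(r_{n+1}))$, both satisfiable with constant $r_n, \gamma_n$.
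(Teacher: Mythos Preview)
Your proposal contains a genuine gap at exactly the point you flagged as the ``main obstacle,'' and unfortunately you resolved it in the wrong direction. You briefly consider taking $r_n$ to decay geometrically, then reject this in favour of constant $r_n = r$, arguing that the hypotheses of Theorem~\ref{thm:HP3} ``only ever evaluate $\hZ_k^f$ at radii that decay geometrically.'' This is false: condition~\eqref{eqn:rnbn} requires
\[
\eps_n^f = \hZ_n^f\bigl(r_n(1+\gamma_n)\bigr) < e^{\lambda_n^u}(1+\gamma_n)^{-1},
\]
evaluated at the actual radius $r_n$, and this $\eps_n^f$ is what defines $\hl_n^u$ via~\eqref{eqn:hlnu}. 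With $r_n = r$ constant and $\hZ_n^f(t) \lesssim C\zeta^{|n|}t^\alpha$, you get $\eps_n^f \sim C\zeta^{|n|}r^\alpha \to \infty$ as $n\to -\infty$, so $e^{\hl_n^u} = \mu' - (1+\gamma)\eps_n^f$ eventually becomes negative and the entire framework of~\eqref{eqn:hlnu}--\eqref{eqn:gn} collapses. This is not a bookkeeping issue: $\eps_n^f$ is the Lipschitz constant of the nonlinear part on the domain where the graph transform acts (see~\eqref{eqn:ghLip} and~\eqref{eqn:barvexpands}), and if it exceeds the linear expansion rate, the map $v\mapsto\bar v$ need not be injective.

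The paper's proof does exactly what you dismissed: it takes $r_n = \kappa^n r$, decaying since $\kappa > \zeta^{1/\alpha} > 1$ and $n<0$. Then $\zeta^{|n|}r_n^\alpha = (\zeta\kappa^{-\alpha})^{|n|}r^\alpha < r^\alpha$ uniformly, which gives a uniform bound $\eps_n^f \leq \frac{C'}{1+\gamma}r^\alpha$ and hence uniform estimates $e^{\cl_n^s}, e^{\hl_n^s} < \kappa < e^{\chi_n} \leq e^{\hl_n^u}$ as in~\eqref{eqn:nuhl}. After that the verification of~\eqref{eqn:Znpsisum} proceeds much as you sketch, yielding $Z_0^\psi(t) \leq Dt^\alpha$. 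Note that decaying $r_n$ causes no trouble for the conclusions: we only need $\psi^u = \psi_0$ on $B^u(r_0)=B^u(r)$, and the backward images automatically lie in the shrinking balls $B^u(r_n)\subset B^u(r)$, which is precisely what conclusion~(3) asserts.
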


\begin{remark}
The result in~\cite[Theorem 7.5.1]{BP07} deals with stable manifolds rather than unstable manifolds.  In order for our approach to treat stable manifolds, we need to impose bounds on $f_n^{-1}$ rather than on $f_n$; ultimately this is due to the fact that we use Hadamard's approach (graph transform), while the proof in~\cite{BP07} uses Perron's approach (implicit function theorem).
\end{remark}

\begin{proof}[Derivation of Theorem~\ref{thm:nonuniform} from Theorem~\ref{thm:HP3}]
Choose $\gamma\in (0,1]$ such that
\[
(1+\gamma)\kappa < \mu'
\]
and define $C', C''$ by
\[
C' = C(1+\gamma)^{1+\alpha}, \qquad C'' = C'/\gamma.
\]
Let $\gamma_n = \gamma$ for all $n\leq 0$; then for any choice of $r_n>0$, we have
\begin{equation}\label{eqn:nuhZ}
(1+\gamma_n) \hZ_n^f(r_n(1+\gamma_n)) \leq C' \zeta^{|n|} r_n^\alpha.
\end{equation}
(Observe that $\zeta^{|n|} \to \infty$ as $n\to -\infty$.)  Let $r\in (0,r_0)$ be such that
\begin{equation}\label{eqn:kappabounds}
\lambda' + C''r^\alpha < \kappa < \frac{\mu' - C'r^\alpha}{1+\gamma},
\end{equation}
and define $r_n$ for $n< 0$ by
\begin{equation}\label{eqn:nurn}
r_n = \kappa^{n} r.
\end{equation}
Then since $\kappa^\alpha > \zeta$, we have $\zeta^{|n|} r_n^\alpha  < r^\alpha$ for all $n<0$, and in particular $\eps_n^f < \frac {C'}{1+\gamma} r^\alpha$.

Let $\chi_n < \hl_n^u < \lambda_n^u$ and $\cl_n^s, \hl_n^s > \lambda_n^s$ be as in~\eqref{eqn:hlnu}--\eqref{eqn:clns}.  Then~\eqref{eqn:nuhZ}--\eqref{eqn:nurn} imply that
\begin{equation}\label{eqn:nuhl}
\begin{aligned}
\max\left(e^{\cl_n^s},e^{\hl_n^s}\right) &\leq e^{\lambda_n^s} + C'' r^\alpha \leq \lambda' + C'' r^\alpha < \kappa, \\
e^{\chi_n} &= \frac{e^{\hl_n^u}}{1+\gamma_n} \geq \frac{e^{\lambda_n^u} - C' \zeta^n r_n^\alpha}{1+\gamma_n}
\geq \frac{\mu' - C' r^\alpha}{1+\gamma} > \kappa.
\end{aligned}
\end{equation}
This establishes~\eqref{eqn:rn}--\eqref{eqn:gn}, and~\eqref{eqn:backwards} follows since $\hl_k^u > \hl_k^s$ for all $k$.  Thus it only remains to find moduli of continuity $Z_n^\psi$ satisfying \eqref{eqn:kn}, which we again do via Proposition \ref{prop:checkmoduli}.  Once we have checked the convergence of the sum in~\eqref{eqn:Znpsisum}, we will be able to apply Theorem~\ref{thm:HP3} and derive the conclusions of Theorem~\ref{thm:nonuniform}.

The inequalities~\eqref{eqn:nuhl}, together with~\eqref{eqn:nuhZ} and~\eqref{eqn:nurn}, show that for $Z_0^\psi$ as in~\eqref{eqn:Znpsisum} we have
\[
Z_0^\psi(t) \leq \sum_{m<0} \kappa^{-1} \left(\frac \kappa{\lambda'}\right)^m C' \zeta^{-m} (t\kappa^m)^\alpha
\leq \kappa C' t^\alpha \sum_{m<0}\left(\frac \kappa{\lambda'}\right)^m.
\]
Thus Theorem~\ref{thm:HP3} proves the existence of a $C^1$ unstable manifold for the sequence $f_n$ with the dynamical properties claimed in Theorem~\ref{thm:nonuniform}.  Furthermore, it shows that $Z_0^\psi(t)$ is a modulus of continuity for $D\psi^u$, which shows that $\psi^u$ is $C^{1+\alpha}$ with H\"older constant $\kappa C' \sum_{m<0} (\kappa/\lambda')^m$, which completes the proof.
\end{proof}

\section{Relationship between non-uniform hyperbolicity and effective hyperbolicity}\label{sec:nuh}
We briefly discuss some differences between the notion of non-uniform hyperbolicity and the notion of effective hyperbolicity.  Note that these differences appear at the purely linear level and do not depend on how the different techniques deal with non-linear behaviour.

\subsection{(Non-uniform) hyperbolicity without effective hyperbolicity}

A sequence of germs may be non-uniformly hyperbolic but not effectively hyperbolic.  This can happen when there are multiple unstable directions which undergo expansion at different times: the notion of effective hyperbolicity used in this paper is not refined enough to detect this phenomenon.  For example, let $f_n\colon \RR^2\to \RR^2$ be defined by $f_n(x,y) = (3x,y/2)$ when $n$ is even, and $f_n(x,y) = (x/2,3y)$ when $n$ is odd.  Then $\lambda_n^u = -\log 2$ for every $n$ and hence $f_n$ is not effectively hyperbolic.  However, the sequence $f_n$ is non-uniformly hyperbolic with positive Lyapunov exponents $\frac 12 (\log 3 - \log 2)$ in all directions in $\RR^2$.

\subsection{Effective hyperbolicity without non-uniform hyperbolicity}

A sequence of germs may be effectively hyperbolic but not non-uniformly hyperbolic, i.e., without having slowly varying (tempered) constants, which are required for non-uniform hyperbolicity~\cite{BP07}.
For example, let $f_n\colon \RR\to \RR$ be defined by $f_n(x) = e^{\lambda_n}x$, where $\lambda_1=4$ and for $k\geq1$ we have
\[
\lambda_n = \begin{cases} 4 & 2^k \leq n < 2^k+2^{k-1}, \\
            -3 & 2^k + 2^{k-1} \leq n < 2^{k+1}. 
            \end{cases}
\]
Then $\llim_{n\to\infty} \frac 1n \sum_{k=0}^{n-1} \lambda_k = 1/2 > 0$, so the sequence is effectively hyperbolic, but if $M_n$ is any sequence of constants such that $\sum_{k=m}^n \lambda_k \geq (n-m)\chi - M_n$ for some $\chi \in (0,1/2)$ and every $0\leq m < n$, then the definition of $\lambda_n$ requires that
\[
M_{2^k} \geq \left(\sum_{j=2^k - 2^{k-2}}^{2^k - 1} \lambda_j\right) - 2^{k-2} \chi = 2^{k-2} (3-\chi).
\]
In particular, $\ulim_{n\to\infty} \frac 1n M_n > \frac 12 = \llim_{n\to\infty} \frac 1n \sum_{k=0}^{n-1} \lambda_k$, so any sequence of constants for non-uniform hyperbolicity must vary more quickly than the Lyapunov exponent.

The example described here is in some sense atypical -- the set of trajectories that are effectively hyperbolic but fail to be non-uniformly hyperbolic has measure zero with respect to any invariant measure.  Indeed, if an ergodic measure gives positive weight to the set of effectively hyperbolic trajectories, then it is a hyperbolic measure and the whole classical theory of non-uniform hyperbolicity applies.

We see from this that effective hyperbolicity is most useful when no a priori information about invariant measures is available.  This is the case, for example, when trying to construct SRB measures for dissipative systems.

\def\cprime{$'$} \def\cprime{$'$}
\providecommand{\bysame}{\leavevmode\hbox to3em{\hrulefill}\thinspace}
\providecommand{\MR}{\relax\ifhmode\unskip\space\fi MR }
\providecommand{\MRhref}[2]{%
\href{http://www.ams.org/mathscinet-getitem?mr=#1}{#2}
}
\providecommand{\href}[2]{#2}

\end{document}